\documentclass{amsart}
\usepackage[pdfusetitle,colorlinks,citecolor=blue,urlcolor=black,linkcolor=black]{hyperref}
\usepackage{amsfonts,amssymb,amsmath,amsthm,stmaryrd,cmap,verbatim}
\usepackage{enumitem}
\usepackage{nicefrac}
\newtheorem{maintheorem}{Main Theorem}[]
\newtheorem{theorem}{Theorem}[section]
\newtheorem{claim}{Claim}[theorem]
\newtheorem{lemma}[theorem]{Lemma}

\newtheorem{corollary}[theorem]{Corollary}

\theoremstyle{definition} 
\newtheorem{definition}[theorem]{Definition}

\theoremstyle{remark}
\newtheorem{remark}[theorem]{Remark}

\DeclareMathOperator{\cf}{cf}
\DeclareMathOperator{\dom}{dom}
\DeclareMathOperator{\Succ}{Succ}
\DeclareMathOperator{\otp}{otp}
\DeclareMathOperator{\col}{Col}
\DeclareMathOperator{\Add}{Add}
\DeclareMathOperator{\rfl}{Refl}
\DeclareMathOperator{\ap}{AP}

\newcommand\s{\subseteq}
\newcommand\br{\blacktriangleright}

\renewcommand\mid{\mathrel{|}\allowbreak}
\def\Mid{\mathrel{\bigm|}}
\newcommand\dprime{{\prime\prime}}
\newcommand\trle{\trianglelefteq}
\newcommand\ra{\rightarrow}

\newcommand*\axiomfont[1]{\textsf{\textup{#1}}}

\newcommand\gch{\axiomfont{GCH}}
\newcommand\sch{\axiomfont{SCH}}

\author{Inbar Oren}
\address{Einstein Institute of Mathematics, The Hebrew University of Jerusalem, Israel}
\email{inbar.oren2@mail.huji.ac.il}

\title{Stationary reflection using ancestrally forced conditions}
\date{Preprint as of \today.}

\begin{document}
\begin{abstract} 
We introduce a new method for obtaining models of stationary reflection at the successor of a singular cardinal of any cofinality. In particular, starting from a cardinal $\kappa$ which is $\kappa^+$-supercompact, we get a model of countable simultaneous reflection at $\aleph_{\omega_1+1}$, improving the result of \cite{BenhamouSinapova}. 
\end{abstract}
\maketitle

\section*{Introduction}
The consistency of compactness principles at accessible cardinals is a well-studied area. In the works of Baumgartner \cite{Baumgartner76} and later Magidor and Shelah \cite{Magidor1982, Shelah:204, Shelah:351}, the specific case of reflection of stationarity of subsets of regular cardinals was identified as a central barrier towards other compactness principles, in particular at the successors of singular cardinals. 

While the consistency strength of stationary reflection at successor of regular or inaccessible cardinals is fairly well understood \cite{Jensen72, Shelah:99, Shelah367}, there is still a large gap between the lower and upper bounds for the consistency strength of the parallel phenomenon at successors of singular cardinals. Several recent works have studied stationary reflection at the successors of singular cardinals, dealing both with the consistency strength and the comptibility of stationary reflection with other combinatorial principles.

A notable development in the questions of compatibility of stationary reflection with the failure of $\sch$ is the framework of $\Sigma$-Prikry forcing  \cite{SigmaPrikry1,SigmaPrikry2,SigmaPrikry3}--- in which stationary reflection at a successor of a singular cardinal is obtained via a sophisticated iteration of Prikry-type forcing notions. These results typically require very strong large cardinal assumptions, such as limits of supercompact cardinals to manage the preservation of stationary sets across the forcing.
For singular cardinals of uncountable cofinality, Benhamou and Sinapova have successfully applied this machinery to force reflection principles at $\kappa^+$, e.g. $\aleph_{\omega_1+1}$, with the failure of $\sch$ \cite{BenhamouSinapova}. 

In a parallel development concerning reductions in the upper bound on the consistency strength, Hayut and Unger showed that Magidor's reflection can be obtained from significantly weaker hypotheses than previously believed \cite{HAYUTUnger2020}. This method was extended later in \cite{BenNeriaHayutUnger} to incorporate the failure of $\sch$.

We aim to generalize their result, so that the same large cardinal hypothesis would allow us to obtain stationary reflection at the successor of a singular with \emph{uncountable} cofinality. 
It seems that Hayut and Unger's machinery of iterated ultrapowers is problematic at uncountable cofinalities. Thus, achieving the main result of this paper required us to strip this machinery down and create a Prikry-type forcing notion that achieves the same result from the same large cardinal assumptions.

Our method, ancestral forcing, permits us to use almost the same assumptions as Hayut and Unger to get stationary reflection at a successor of a singular cardinal with uncountable cofinality. 
Moreover, our method can achieve much more and in particular we can obtain the consistency of countable simultaneous reflection at $\aleph_{\omega_1+1}$ assuming the existence of a cardinal $\kappa$ that is $\kappa^+$-supercompact.

\subsection*{Notations and definitions.}
Our notations are mostly standard. 

Let $\kappa<\lambda$. We denote $E^\lambda_\kappa=\{\alpha<\lambda\mid \cf(\alpha)=\kappa\}$, and $E^\lambda_{<\kappa}=\{\alpha<\lambda\mid \cf(\alpha)<\kappa\}$.

\begin{definition}
    Let $\lambda$ be a limit ordinal and let $S\s \lambda$ be stationary. 
    \begin{enumerate}
        \item We say that  $\rfl(S)$ holds iff for every $T\s S$ stationary there is $\delta<\lambda$ such that $T\cap \delta$ is stationary in $\delta$.
        \item Let $\kappa\le \lambda$. We say that $\rfl_{<\kappa}(S)$ holds iff for every $\mathcal S\s \mathcal P(S)$ such that $|\mathcal S|<\kappa$ and every $A\in \mathcal S$ is a stationary subset of $S$, there is some $\delta<\lambda$ such that for every $A\in\mathcal S$, $A\cap\delta$ is stationary in $\delta$.
        \item Let $\mathcal S\s \mathcal P(\lambda)$ be such that each $S\in\mathcal S$ is stationary. We denote $T(\mathcal S):=\{\delta<\lambda\mid \forall S\in \mathcal S \text{ reflects at }\delta\}$.
    \end{enumerate} 
\end{definition}

Let $\mu$ be a regular cardinal. Let $I[\mu]$ denote Shelah's Approachability Ideal, as defined in \cite{Sh:108}; see also \cite{Eisworthhandbook}.

We say that $\ap_\mu$ holds iff $\mu\in I[\mu]$.

We assume familiarity with forcing techniques, and in particular with Prikry-type forcings (e.g., Magidor forcing and the related notions of Mitchell order). We refer the reader to \cite{Jech2003,GitikHandbook}.

\subsection*{Paper overview.}

Let us describe the structure of this paper:
\begin{enumerate}
\item In Section \ref{sec: uncountable construction} we construct our main forcing notions $\mathbb M^\ast$ and $\mathbb R^\ast$, where both of which are essential to the proofs of our main theorems.

In Subsection \ref{subsec: M*} we define the main forcing $\mathbb M^\ast$; in \ref{subsubsec: proj} we observe the essential projection and in ~\ref{sec:prikryproperty} we prove the strong Prikry Property for this forcing and show that it is cardinal-preserving. 
In Subsection \ref{subsec: R*} we define the main forcing $\mathbb R^\ast$ which depends on $\mathbb M^\ast$ and in ~\ref{sec:prikrypropertydown} we prove the strong Prikry Property for this forcing and show it preserves the required cardinals. 
\item In Section \ref{sec: mainthm proofs} we shall prove the main theorems:
\begin{maintheorem}[\ref{main theorem 1}]\label{1st maintheorem}
    Assume $\gch$.
    Let $\rho$ be some regular cardinal.
    Let $\kappa$ be a measurable cardinal with $o(\kappa)\geq \rho$ such that $\rfl(E^{\kappa^+}_{<\kappa})$ holds and is preserved by $\Add(\kappa^+,1)$.
    
    Then, there is a forcing extension in which all cardinals are preserved, $\rfl(\kappa^+)$ holds and $\kappa$ is singular with cofinality $\rho$.  
    
    Moreover, if $\rfl_{<\kappa}(E^{\kappa^+}_{<\kappa})$ holds and is preserved by $\Add(\kappa^+,1)$ then in the forcing extension $\rfl_{<\rho}(\kappa^+)$ holds as well.
\end{maintheorem}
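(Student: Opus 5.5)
The plan is to force with $\mathbb M^\ast$, the Magidor-type forcing built from the coherent sequence of measures witnessing $o(\kappa)\ge\rho$. It adds a club of order type $\rho$ in $\kappa$, so $\kappa$ becomes singular of cofinality $\rho$. By the strong Prikry property and the cardinal-preservation results of Subsection~\ref{sec:prikryproperty}, all cardinals are preserved. Under $\gch$, $\kappa^+$ is preserved as well, since $\mathbb M^\ast$ is $\kappa^+$-c.c.

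The substance is reflection. I split a stationary $T\s\kappa^+$ in the extension according to cofinality. Every point of cofinality $\kappa$ in the ground model now has cofinality $\rho$. Points of cofinality $\rho$ in the extension either come from ground-model points of cofinality $\kappa$, or have ground-model cofinality in $E^{\kappa^+}_{<\kappa}$.

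Reflection of stationary subsets of $E^{\kappa^+}_{<\kappa}\cap E^{\kappa^+}_{\neq\rho}$ should come from the hypothesis $\rfl(E^{\kappa^+}_{<\kappa})$ in $V$. The difficulty is that $T$ lives in the extension, so it is not in $V$.

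To handle this I use the projection of Subsection~\ref{subsubsec: proj} together with $\mathbb R^\ast$. The idea is to realize the generic for $\mathbb M^\ast$ as a quotient of a forcing $\mathbb R^\ast$ that is, at the crucial level, close to $\Add(\kappa^+,1)$, and is equivalent to it modulo a forcing with the strong Prikry property. In that model the hypothesis that reflection is preserved by $\Add(\kappa^+,1)$ applies.

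Concretely, fix a name $\dot T$ and a condition. I then define the ground-model set
\[
S=\{\alpha<\kappa^+ \mid \text{some direct extension forces } \alpha\in\dot T\}.
\]
Using the strong Prikry property for $\mathbb R^\ast$ (Subsection~\ref{sec:prikrypropertydown}), I show that $S$ is stationary in the intermediate $\Add(\kappa^+,1)$-extension. Reflection there then gives a $\delta$ of cofinality $<\kappa$ at which $S\cap\delta$ is stationary.

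Next I argue that $T\cap\delta$ remains stationary in the final model. Since $\cf(\delta)<\kappa$, the quotient forcing has a small, closed-enough part below $\delta$, so clubs in $\delta$ added by it can be handled by a density argument. The "ancestral" conditions should guarantee that the relevant direct extensions can be chosen uniformly enough to capture $T\cap\delta$ from $S\cap\delta$.

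Points of $T$ whose ground-model cofinality is $\kappa$ are handled separately. I either reflect them at points of ground-model cofinality between the relevant measurables, using the measures in the coherent sequence as in Magidor's argument, or show they can be absorbed into the previous case.

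For the simultaneous version, I run the same argument with fewer than $\rho$ names $\dot T_i$ simultaneously. Since $\rho<\kappa$, the direct-extension ordering is sufficiently closed to decide the family uniformly. The $\rfl_{<\kappa}$ hypothesis in the $\Add(\kappa^+,1)$-extension then gives a common reflection point.

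The restriction to $<\rho$ sets is genuinely needed: with $\rho$ many sets, one could code the Magidor club and defeat simultaneous reflection.

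I expect the main obstacle to be the transfer step, showing that stationarity of $T\cap\delta$ in the final model follows from stationarity of the ground-derived $S\cap\delta$. This requires a careful analysis of how $\mathbb M^\ast$ and $\mathbb R^\ast$ interact below $\delta$. It is the exact point where the ancestral conditions replace Hayut–Unger's iterated ultrapowers. My first attempt would be to show that the quotient restricted to the part below $\cf(\delta)$ has the $\cf(\delta)^+$-c.c. or is suitably closed, so that stationary subsets of $\delta$ are preserved.
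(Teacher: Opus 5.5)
Your description of $\mathbb M^\ast$ misses its essential component, and this produces a genuine gap. $\mathbb M^\ast$ is \emph{not} $\kappa^+$-c.c. Besides the Magidor part, every condition carries an ancestrally forced name $\dot c$ for a condition in the forcing that shoots a club through $\kappa^+\setminus(E^{\kappa^+}_\kappa)^V$. The point of this component is to make $(E^{\kappa^+}_\kappa)^V$ nonstationary, which no $\kappa^+$-c.c.\ forcing can do. ($\kappa^+$ is preserved instead via the Strong Prikry Property and bounding names below $\max(\dot c)$.)

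This component is indispensable. In any extension with the same $\kappa^+$ in which $(E^{\kappa^+}_\kappa)^V$ is still stationary, it is a non-reflecting stationary set. The reason is that every $\delta<\kappa^+$ carries a $V$-club of order type $\cf^V(\delta)\le\kappa$, all of whose limit points have $V$-cofinality $<\kappa$. So your plan to reflect the points of ground-model cofinality $\kappa$ ``as in Magidor's argument'' cannot succeed. In the paper this case simply disappears, because any stationary set in $V[G]$ can be shrunk into $(E^{\kappa^+}_{<\kappa})^V$.

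Also, $\mathbb R^\ast$ plays no role in this theorem: it collapses cardinals, and it projects onto $\mathbb M^\ast$, not conversely. The $\Add(\kappa^+,1)$-extension is $V[g_x]$, where $g_x$ is the $\mathbb X_x$-generic induced from $G$ through the projection $\pi_x$. Here $\mathbb X_x\simeq\Add(\kappa^+,1)$ because it has the dense $\kappa^+$-closed subset $\mathbb D_x$.

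The transfer step, which you rightly flag as the crux, is set up so that it cannot work. Your $S$ is a ground-model set, so passing to the $\Add(\kappa^+,1)$-extension buys nothing: a ground-model set is stationary there iff it is in $V$, and it has the same reflection points. Moreover, the direct extensions witnessing $\alpha\in S$ for different $\alpha$ may have pairwise incompatible club-shooting parts. So stationarity of $S\cap\delta$ says nothing about $T\cap\delta$.

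The missing idea is to let the mirror set depend on the generic:
\begin{enumerate}
\item By pigeonhole over the $\kappa$ many stems in $\mathbb Y$, fix a stem $x$ such that the set of $\gamma$ forced into $T$ by conditions in $G$ with stem $x$ is stationary.
\item In $V[g_x]$, let $S''$ consist of the $\alpha$ forced into that set by some condition with stem $x$ whose club part lies in $g_x$.
\item By the hypothesis, $S''$ reflects in $V[g_x]$ at some $\beta$ of cofinality $\eta<\kappa$. Let $C_\beta$ be a $V$-club of order type $\eta$. The witnesses for $\alpha\in S''\cap C_\beta$ are pairwise compatible, since they have the same stem and club parts in the filter $g_x$.
\item The $<\kappa$-closure of $\le^\ast$ with a fixed stem amalgamates these witnesses into a single condition forcing $S''\cap C_\beta\s T$.
\item The preservation of stationary subsets of small regular cardinals (Lemma~\ref{bounded new subsets below kappa are from M} and its corollary) finishes.
\end{enumerate}

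The same problem affects your simultaneous argument, since closure of $\le^\ast$ does not decide the $\dot T_i$ uniformly. The paper applies the Strong Prikry Property to each $D(\dot S_\xi)$. It then uses $\theta<\rho=\cf(\nu)$ to find $\mu\in A^{p^\ast}$ with $o^{\mathcal U}(\mu)=\sup_\xi\alpha_\xi$, so that all the trees restricted to $\mu$ are $\mu$-fat. Finally it reflects simultaneously, in $V[g_{x_\mu}]$, the $\mu$ many sets $S^t_\xi$ indexed by the possible stems $t$ below $\mu$. This is exactly why $\rfl_{<\kappa}$, rather than $\rfl_{<\rho}$, is assumed.
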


\begin{maintheorem}[\ref{main theorem 2}] \label{2nd mainthorem}
    Assume $\gch$.
    Let $\rho$ be some regular cardinal.
    If in $V$ $\kappa$ is measurable with $o(\kappa)\geq \rho$ and $\rfl(E^{\kappa^+}_{<\kappa})$ holds and is preserved by $\Add(\kappa^+,1)$ and 
    for any stationary $S\s E^{\kappa^+}_{<\kappa}$, 
    there are unboundedly many regular cardinals $\mu < \kappa$ such that $T(\{S\}) \cap E^{\kappa^{+}}_\mu \neq \emptyset$ and one of the following holds:
    
    \begin{enumerate}        
        \item $\mu$ is not a successor of a singular cardinal, 
        \item $\ap_{\mu}$ holds. 

    \end{enumerate}
    
    Then there is a forcing extension in which $\kappa=\aleph_{\rho}$, $\kappa^+=\aleph_{\rho+1}$ and in which $\rfl(\aleph_{\rho+1})$ holds.  
\end{maintheorem}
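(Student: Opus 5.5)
The plan is to build on Main Theorem \ref{1st maintheorem}. The difference is that Magidor-type forcing $\mathbb M^\ast$ is now interleaved with Lévy collapses, so that $\kappa$ becomes $\aleph_\rho$.

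\textbf{The forcing.} Fix a coherent sequence $\vec U$ of normal measures on $\kappa$ of length $\rho$. Replace $\mathbb M^\ast$ by its collapse-augmented version:
\begin{enumerate}
\item conditions carry, between consecutive points $\alpha_i<\alpha_{i+1}$ of the Magidor sequence, a collapse $\col(\alpha_i^{+},<\alpha_{i+1})$;
\item at the top they carry $\col(\alpha_{\max}^{+},<\kappa)$, with collapse parts taken from guiding generics, or as functions on measure-one sets as in the usual Magidor–Radin-with-collapses setup.
\end{enumerate}
The ancestral structure of $\mathbb R^\ast$ is carried along, with the $\Add(\kappa^+,1)$ component kept as before.

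\textbf{Routine step.} I would redo the strong Prikry property and the cardinal analysis of Subsections \ref{sec:prikryproperty} and \ref{sec:prikrypropertydown} for this version. Since the collapses are closed enough above each stem, the standard arguments go through. They show that:
\begin{enumerate}
\item the only cardinals below $\kappa$ that survive are the $\alpha_i$ and their successors, together with the limits of these;
\item $\kappa^+$ is preserved.
\end{enumerate}
Hence $\kappa=\aleph_\rho$ and $\kappa^+=\aleph_{\rho+1}$ in the extension.

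\textbf{Reflection.} The core is to repeat the reflection argument of Main Theorem \ref{1st maintheorem}. Let $\dot T$ name a stationary subset of $\kappa^+$ in the extension. By the Prikry property and the projection of \ref{subsubsec: proj}, $\dot T$ is captured in an extension by $\Add(\kappa^+,1)$, a small forcing, and collapses. By the hypothesis and its preservation, we obtain a stationary set $S\s E^{\kappa^+}_{<\kappa}$ in $V[\Add(\kappa^+,1)]$ that is forced into $\dot T$ on a large set.

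The new obstacle is that points of cofinality $\mu$ in $V$ must still be reflection points after $\mu$ is collapsed to some $\aleph_n$-like cardinal. This is exactly the role of the extra assumption. We choose $\mu$ among the unboundedly many regular cardinals with $T(\{S\})\cap E^{\kappa^+}_\mu\ne\emptyset$, and we arrange via the measure-one sets that $\mu$ lies strictly inside a collapse interval $(\alpha_i^+,\alpha_{i+1})$. Then:
\begin{enumerate}
\item $\mu$ becomes an ordinal of cofinality $\alpha_i^{+}$;
\item for $\delta\in T(\{S\})\cap E^{\kappa^+}_\mu$, stationarity of $S\cap\delta$ must survive the collapse $\col(\alpha_i^+,<\alpha_{i+1})$ together with the rest of the forcing.
\end{enumerate}

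\textbf{Main obstacle.} The hard step is this preservation of stationarity of $S\cap\delta$ in $\delta$, which reduces to preservation of stationary subsets of $\mu$. The rest of the forcing is either small relative to $\mu$ or sufficiently closed, so the issue is the $\alpha_i^+$-closed collapse. I would split into the two cases of the hypothesis:
\begin{enumerate}
\item If $\mu$ is not the successor of a singular cardinal, then $\mu$ is inaccessible or the successor of a regular cardinal. Thin $S\cap\delta$ to points of cofinality below $\alpha_i^{+}$, which is possible after restricting the measure-one sets. Then the standard fact applies: $\alpha_i^+$-closed forcing preserves stationary subsets of $E^\mu_{<\alpha_i^+}$.
\item If $\ap_\mu$ holds, then every stationary subset of $\mu$ concentrating on small cofinalities is, modulo clubs, made of approachable points. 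Closed forcings preserve the stationarity of such sets, by Shelah's argument.
\end{enumerate}

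In both cases, the remaining collapses above $\alpha_{i+1}$ and the Prikry part are handled by the strong Prikry property. This mirrors the argument used for $\mathbb R^\ast$, which shows that the reflection point found in the ground model persists.
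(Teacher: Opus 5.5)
Your route is the paper's. You force with $\mathbb R^\ast$ and pass to $V[g_x]$ (an $\Add(\kappa^+,1)$-extension), where the mirrored set reflects. You place the reflection cofinality $\mu$ strictly inside a collapse interval $(\gamma_0^+,\gamma_1)$ between consecutive points of the generic club. Lemma~\ref{bounded new subsets below kappa are from R} then shows that clubs in $\mu$ come from $\mathbb R\restriction\gamma_1$, which is small times $\gamma_0^+$-closed, and you split along the two alternatives of the hypothesis.

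The step that fails as stated is case (1). The ``standard fact'' that $\alpha_i^+$-closed forcing preserves every stationary subset of $E^\mu_{<\alpha_i^+}$ is not a ZFC theorem for arbitrary regular $\mu$ once cofinalities above $\omega$ are involved. Shelah's preservation theorem concerns stationary sets lying in $I[\mu]$. At successors of singular cardinals, a stationary set of non-approachable points can be destroyed by a sufficiently closed collapse. If your fact held in general, the $\ap_\mu$ alternative, which is your own case (2), would be superfluous.

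What actually makes case (1) work is again approachability. Under $\gch$ an inaccessible $\mu$ satisfies $\mu^{<\mu}=\mu$, so $\mu\in I[\mu]$. If $\mu=\lambda^+$ with $\lambda$ regular, then $E^\mu_{<\lambda}\in I[\mu]$ by Shelah \cite{Sh:108}. So the reflected set $\hat S\s E^\mu_\beta$ is approachable, and the same closed-forcing argument as in case (2) applies. This is how the paper argues: both cases go through Shelah's theorem.

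You also need to fix the order of choices. You cannot in general thin $S\cap\delta$ to cofinalities below $\alpha_i^+$ after $\delta$ and $\mu$ are chosen, because $S\cap\delta$ may concentrate on cofinalities above $\alpha_i$. Do the thinning first and globally. By $\kappa^+$-completeness of the nonstationary ideal, fix $\beta<\kappa$ with $S\cap E^{\kappa^+}_\beta$ stationary. Then take consecutive points $\gamma_0<\gamma_1$ of the generic club with $\beta<\gamma_0$ and $o^{\mathcal U}(\gamma_1)=0$. Only then choose, by a density argument, a cofinality $\mu\in(\gamma_0^+,\gamma_1)$ as in the hypothesis that carries a reflection point.

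Finally, you assert $S\s E^{\kappa^+}_{<\kappa}$ without justification. It holds because the ancestral component shoots a club avoiding $(E^{\kappa^+}_\kappa)^V$. Its projections $\mathbb X_x$ are only densely $\Add(\kappa^+,1)$, and a plain Cohen subset of $\kappa^+$ would not kill that set.
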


\begin{maintheorem}[\ref{main theorem 3}] \label{3rd mainthorem}
    Assume $\gch$.
    Let $\rho$ be some regular cardinal.
    If in $V$ $\kappa$ is measurable with $o(\kappa)\geq \rho$ and $\rfl_{<\kappa}(E^{\kappa^+}_{<\kappa})$ holds and is preserved by $\Add(\kappa^+,1)$ and 
    for any collection $\mathcal S\s\mathcal P(E^{\kappa^+}_{<\kappa})$ consisting $<\kappa$ stationary subsets of $E^{\kappa^+}_{<\kappa}$, 
    there are unboundedly many regular cardinals $\mu < \kappa$ such that $T(\mathcal S) \cap E^{\kappa^{+}}_\mu \neq \emptyset$ one of the following holds:

    \begin{enumerate}        
        \item $\mu$ is not a successor of a singular cardinal, 
        \item $\ap_{\mu}$ holds. 
    \end{enumerate}
    Then there is a forcing extension in which 
    $\kappa=\aleph_{\rho}$, $\kappa^+=\aleph_{\rho+1}$ and in which $\rfl_{<\rho}(\aleph_{\rho+1})$ holds.
\end{maintheorem}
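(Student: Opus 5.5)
The plan is to follow the proof of Main Theorem~\ref{2nd mainthorem}, replacing single stationary sets by families of fewer than $\kappa$ stationary sets throughout. The forcing is the same.

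\textbf{Step 1: the preparatory forcing.} First force with $\mathbb M^\ast$ followed by $\mathbb R^\ast$. The latter adds the interleaved Lévy collapses, so that $\kappa=\aleph_\rho$ and $\kappa^+=\aleph_{\rho+1}$ in the final model. The strong Prikry property and the preservation of $\kappa^+$ come from Subsection~\ref{subsec: R*}. The collapses between consecutive points of the Magidor-type club $\langle \kappa_i\mid i<\rho\rangle$ are arranged so that the surviving cofinalities below $\kappa$ are exactly the intended regular cardinals $\mu$.

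\textbf{Step 2: reducing to the ground model.} Let $\langle \dot S_j\mid j<\theta\rangle$ with $\theta<\rho$ name stationary subsets of $\kappa^+$ in the extension. Since $\rho<\kappa$, we have $\theta<\kappa$.

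1. Use the club $\{\kappa_i\}$ of order type $\rho$ to shrink each $S_j$ to a stationary subset of a single cofinality class $E^{\kappa^+}_{\nu_j}$, where $\nu_j<\kappa$.
2. Using the projection from \ref{subsubsec: proj} and the strong Prikry property, pull each $S_j$ back to a ground-model stationary set $S_j^\ast\subseteq E^{\kappa^+}_{<\kappa}$. Concretely, $S_j^\ast$ is the set of $\alpha$ for which some direct extension of a fixed condition forces $\alpha\in \dot S_j$. This set is stationary in $V^{\Add(\kappa^+,1)}$, since $\mathbb M^\ast$ absorbs or is absorbed in an $\Add(\kappa^+,1)$-like quotient.
3. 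Because $\kappa^+$-distributivity and the $\kappa^{++}$-c.c.\ give fewer than $\kappa^+$ many relevant decisions, all $\theta$ sets can be handled by one condition.

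\textbf{Step 3: simultaneous reflection.} Apply the hypothesis of Main Theorem~\ref{3rd mainthorem} to $\mathcal S=\{S_j^\ast\mid j<\theta\}$. This gives unboundedly many $\mu<\kappa$ with $T(\mathcal S)\cap E^{\kappa^+}_\mu\neq\emptyset$, where each such $\mu$ is either not the successor of a singular cardinal or satisfies $\ap_\mu$. The assumption that $\rfl_{<\kappa}(E^{\kappa^+}_{<\kappa})$ is preserved by $\Add(\kappa^+,1)$ lets us take such a $\delta$ in the relevant intermediate model. Choose $\mu$ above the coordinates fixed by the stem and among the cardinals kept regular by the collapse, and fix $\delta\in T(\mathcal S)\cap E^{\kappa^+}_\mu$.

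\textbf{Step 4: preservation, the main obstacle.} We must show that each $S_j\cap\delta$ remains stationary in $\delta$ after the tail of the forcing. The tail of the forcing at $\delta$ factors into two parts.

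1. The part below $\mu$ is small, of size $<\mu$, so it preserves stationarity of subsets of $\cf(\delta)$.
2. The part above $\mu$ is $\mu$-closed or $\mu^+$-distributive, so it adds no new clubs in $\delta$ and preserves stationary subsets of $E^\delta_{<\mu}$.

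The delicate case is a $\mu$ that is the successor of a singular cardinal, where the collapse interval could kill stationarity. Here $\ap_\mu$ is used: points of $S_j\cap\delta$ are approachable, and approachable stationary sets are preserved by $\mu$-strategically-closed forcing, following Shelah's argument for $I[\mu]$. Finally, the stationarity of the pulled-back traces $S_j^\ast\cap\delta$ must be converted back into stationarity of $S_j\cap\delta$ for all $j<\theta$ at once. I would obtain this by applying the strong Prikry property to a name for a club in $\delta$, for each $j$, and intersecting fewer than $\rho$ many measure-one sets. This is possible because the measures are $\kappa$-complete and $\theta<\rho<\kappa$.
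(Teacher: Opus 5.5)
There is a genuine gap at the core of your proposal: the transfer between the names $\dot S_j$ and sets to which the reflection hypothesis applies (Steps 2--3 and the ``conversion back'' at the end of Step 4).

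\textbf{The pull-back does not work.} Your $S^\ast_j$ consists of the points forced into $\dot S_j$ by direct extensions of one fixed condition. This set need not be stationary: membership in $\dot S_j$ is in general decided only after new points are added to the stem, and the Strong Prikry Property only gives a fat tree of such stem extensions, with $\kappa$ many branches. Even when $S^\ast_j$ is stationary, it cannot be converted back. The witnesses for the points of $S^\ast_j\cap C_\delta$ may have pairwise incompatible $\dot c$-parts, so they cannot be amalgamated into one condition forcing $S^\ast_j\cap C_\delta\s\dot S_j$.

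\textbf{How the paper does it.}
\begin{enumerate}
\item A $\le^\ast$-decreasing $\theta$-sequence of applications of Theorem~\ref{Strong R* Prikry Property} gives fat trees $T^\ast_\xi$ whose measures have indices below some $\alpha_\xi<\nu$. This is not a distributivity count; $\mathbb R^\ast$ is not $\kappa^+$-distributive.
\item Since $\theta<\rho=\cf(\nu)$, we get $\tilde\alpha=\sup_{\xi<\theta}\alpha_\xi<\nu$. So one can put into the stem a point $\mu$ (unrelated to your $\mu$) with $o^{\mathcal U}(\mu)=\tilde\alpha$ at which all the trees reflect. Only $\mu<\kappa$ branches $t$ below $\mu$ remain.
\item For each $\xi$ and $t$, the mirrored set $S^t_\xi$ is defined in $V[g_{x_\mu}]$ using only $\dot c\in g_{x_\mu}$. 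Here $g_{x_\mu}$ is the generic for $\mathbb X_{x_\mu}$ (densely $\Add(\kappa^+,1)$) induced by the projection of \S\ref{subsubsec: proj}. Then $\rfl_{<\kappa}$ is applied there to the $\mu$-sized family $\{S^t_\xi\}$ to get $\delta$.
\item This is exactly what the ancestral conditions are for. The fewer than $\kappa$ many witnesses for $\gamma\in S^t_\xi\cap C_\delta$ lie in one generic filter, so they have a common lower bound. This yields a single $p^\ast_\mu\le^\ast p_\mu$ with $p^\ast_\mu+t\Vdash S^t_\xi\cap C_\delta\s\dot S_\xi$ for all relevant $\xi,t$.
\end{enumerate}
The only place you explicitly use $\theta<\rho$ is the intersection of measure-one sets, which needs only $\theta<\kappa$. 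That cannot be where the hypothesis is really needed, since $\rfl_{<\kappa}(\kappa^+)$ fails once $\cf(\kappa)=\rho$ (take $\rho$ stationary sets concentrating on cofinalities cofinal in $\kappa$).

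\textbf{The preservation argument in Step 4 also fails as stated.} The points of the generic club are $V$-inaccessible. The only cardinals below $\kappa$ that survive are $\omega$, $\omega_1$, the club points and their successors. So $\cf^V(\delta)$ (your $\mu$, the paper's $\eta$) is typically collapsed; a successor of a singular never survives. The hypothesis gives you no way to choose it ``among the cardinals kept regular''. Nor is the forcing below it small: it contains $\col(\alpha_0^+,<\alpha_1)$, where $\alpha_0<\eta<\alpha_1$ are consecutive club points.

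The paper instead proceeds as follows:
\begin{enumerate}
\item Shrink each $S^t_\xi$ to a single cofinality $\beta(\xi,t)$, and arrange a club point $\alpha_0$ with $\sup\beta(\xi,t)<\alpha_0<\eta$.
\item Choose $\delta$ so that the traces along a $V$-club $C_\delta$ of order type $\eta$ lie in $I[\eta]$. This comes from Shelah's ZFC theorem when $\eta$ is not a successor of a singular, and from $\ap_\eta$ otherwise.
\item Use Lemma~\ref{bounded new subsets below kappa are from R} at $\alpha_1$ (with $o^{\mathcal U}(\alpha_1)=0$) to reduce any club of $\eta$ in $V[G]$ to the forcing below $\alpha_1$.
\item Invoke Shelah's theorem \cite{Sh:108}: approachable stationary sets of points of cofinality $<\alpha_0^+$ survive the $\alpha_0^+$-closed interval collapse.
\end{enumerate}
So approachability is used in both cases of the hypothesis, and the relevant closure is $\alpha_0^+$, not the $\mu$-strategic closure you invoke. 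Also, one forces with $\mathbb R^\ast$ alone, which projects onto $\mathbb M^\ast$; there is no preliminary $\mathbb M^\ast$ step.
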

In subsection \ref{subsec: downto construction} we construct a hybrid forcing notion. In \ref{sec:prikrypropertydown} we prove the strong Prikry Property for this forcing, and in subsection \ref{subsec: downto getting stat} we prove the main theorem of this section.
\end{enumerate}

\section{Introducing forcing notions with ancestrally forced conditions.} 
\label{sec: uncountable construction}
In this section we will construct our main forcing notions $\mathbb M^\ast$ and $\mathbb R^\ast$, Prikry-type forcings that will destroy the stationarity of $E^{\kappa^+}_{\kappa}$ and explore the cardinal structure of their generic extensions. 
Both forcing notions are essential for the proofs of our main theorems, where the model required for Main theorem \ref{1st maintheorem} is the generic extension for $\mathbb M^\ast$ and the model required for Main theorems \ref{2nd mainthorem} and \ref{3rd mainthorem} is the generic extension for $\mathbb R^\ast$.

Assume that, in $V$, $\kappa$ is measurable, $o(\kappa)\ge\nu$ for some $\nu<\kappa$ limit ordinal with $\cf(\nu)=\rho$.

Let $\vec{\mathcal{U}} :=\langle U_{i,\alpha}\mid \alpha\le \kappa, i<o^{\mathcal {U}}(\alpha)\rangle$ be a coherent sequence with $o^{\mathcal {U}}(\kappa)=\nu$ and let $\mathbb M :=\mathbb M_{\vec{\mathcal U}}$ be Magidor forcing for this coherent sequence. 
Let $\dot {\mathbb C}$ be a name of a club-shooting forcing in $V^\mathbb M$ that shoots a club disjoint from $(E_\kappa^{\kappa^+})^V$ and let us denote by $\mathbb T(\dot{\mathbb C})$ the term forcing of $\dot {\mathbb C}$. That is,  $\mathbb T(\dot{\mathbb C})= \{\dot c\mid \exists p\in \mathbb M,\, p\Vdash \dot c\in \dot{\mathbb C}\}$.

Let $p^0=\langle(\alpha_0,A_0),\ldots,(\alpha_{n-1},A_{n-1}),A\rangle\in \mathbb M$ and define 
\[x(p^0):=\langle(\alpha_0,A_0),\ldots,(\alpha_{n-1},A_{n-1})\rangle.\] 
Also define $\mathbb Y:=\{x(p^0)\mid p^0\in \mathbb M\}$. 
Notice that $|\mathbb Y|=\kappa$.

For $x\in\mathbb Y$, define
\[\mathbf a(x):= \{ A^x_\beta\cup \{\beta\}\mid \beta\in \dom(x)\}.\]

\subsection{\texorpdfstring{Constructing $\mathbb M^\ast$}{The forcing M*}}\label{subsec: M*} 
Let us define the forcing notion $\mathbb M^\ast$ as the set of conditions of the form
\[ p:=\langle(\alpha_0,A_0),\ldots,(\alpha_{n-1},A_{n-1}),A,c^p\rangle\] 
where:

\begin{enumerate}
    \item $p^0:=\langle(\alpha_0,A_0),\ldots,(\alpha_{n-1},A_{n-1}),A\rangle\in\mathbb M_{\vec {\mathcal U}}$.
    \item $c^p$ is \emph{ancestrally} forced to belong to $\dot{\mathbb C}$. 
    Let $p_\emptyset:= \langle A^p\cup \mathbf a(x(p))\rangle$ then
    \[ p_\emptyset \Vdash \dot c^p\in \dot{\mathbb C}.\]
\end{enumerate}

Let us define relations $\le,\le^\ast$ on $\mathbb M^\ast$ as follows: for $p,q\in\mathbb M^\ast$ we say that 
\begin{itemize}
    \item $q\le p$ ($q$ extends $p$) iff
        \begin{enumerate}
            \item $q^0\le_{\mathbb M_{\vec{\mathcal U}}}p^0$,
            \item $c^p$ is \emph{ancestrally} forced to be an initial segment of $c^q$ i.e.,   
            \[q_\emptyset=\langle A^q\cup \mathbf a(x(q))\rangle \Vdash \dot c^p\trle \dot c^q.\]
        \end{enumerate}
    \item $q\le^\ast p$ ($q$ directly extends $p$) iff $q\le p$ and $q^0\le_{\mathbb M_{\vec{\mathcal U}}}^\ast p^0$.
\end{itemize}

\begin{lemma}
    The order $\le$ on $\mathbb M^\ast$ is transitive.
\end{lemma}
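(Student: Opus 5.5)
Suppose $r\le q$ and $q\le p$ in $\mathbb M^\ast$; we show $r\le p$. The first clause is immediate: $r^0\le q^0\le p^0$ in $\mathbb M_{\vec{\mathcal U}}$, and Magidor forcing is transitive. The content is the second clause, namely
\[ r_\emptyset=\langle A^r\cup\mathbf a(x(r))\rangle\Vdash \dot c^p\trle \dot c^r .\]
We know $r_\emptyset\Vdash \dot c^q\trle\dot c^r$ and $q_\emptyset\Vdash \dot c^p\trle\dot c^q$. Since $\trle$ is transitive in every extension, it suffices to show that $r_\emptyset$ forces everything $q_\emptyset$ forces. The plan is therefore to prove $r_\emptyset\le_{\mathbb M} q_\emptyset$, or at least that $r_\emptyset$ is compatible with, and forces, the statements forced by $q_\emptyset$.

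To compare $r_\emptyset$ with $q_\emptyset$ we use the structure of Magidor extension. Since $r^0\le q^0$, the stem $x(r)$ extends $x(q)$. Each point $\beta\in\dom(x(q))$ is still in $\dom(x(r))$, and its set satisfies $A^{x(r)}_\beta\s A^{x(q)}_\beta$. Each new point $\gamma\in\dom(x(r))\setminus\dom(x(q))$ was added from some measure-one set of $q$: either from $A^q$ (if $\gamma>\max\dom x(q)$) or from some $A^{x(q)}_\beta$ with $\beta$ the least old point above $\gamma$. Its associated set $A^{x(r)}_\gamma$ is then contained in the corresponding set of $q$ below $\gamma$. In addition, $A^r\s A^q$ above the top of the stem.

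Consequently every member of $A^r\cup\mathbf a(x(r))$ (each set $A^{x(r)}_\gamma\cup\{\gamma\}$, and $A^r$) is contained in a member of $A^q\cup\mathbf a(x(q))$. This is exactly why the sets $\mathbf a(x)$ include the point $\beta$ itself: old points persist, and new points lie inside old measure-one sets. So, with the empty-stem conditions read as "all points are chosen from the given union of sets", $r_\emptyset$ imposes a stronger restriction than $q_\emptyset$. Any Magidor generic containing $r_\emptyset$ then contains $q_\emptyset$, perhaps after a routine direct-extension argument, and so $r_\emptyset\Vdash \dot c^p\trle\dot c^q$.

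The main obstacle is the formal meaning of $\langle A^q\cup\mathbf a(x(q))\rangle$ as a Magidor condition with empty stem. In particular, it must be verified that the measure-one requirements at each $\alpha$ (the sets $A\cap\alpha\in U_{i,\alpha}$) are inherited, so that the containment above really yields $r_\emptyset\le^\ast q_\emptyset$. My first attempt will be to check the coherence condition pointwise via the definition of $\le_{\mathbb M}$. If that fails, I fall back on showing that $r_\emptyset$ forces $q_\emptyset$ into the generic, using the characterization of Magidor generics by their generic club: every point of the club eventually lands in $A^q\cup\bigcup\mathbf a(x(q))$.
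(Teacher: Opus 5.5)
Your proposal is correct and follows the paper's proof. The paper likewise notes that the sets coming from $x(r)$ are absorbed by those of $x(q)$ (written there as $\mathbf a(x(r))\s\mathbf a(x(q))$), deduces $r_\emptyset\le_{\mathbb M} q_\emptyset$, and concludes $r_\emptyset\Vdash\dot c^p\trle\dot c^q\trle\dot c^r$. Since $r_\emptyset$ and $q_\emptyset$ both have empty stems, the containment $A^r\cup\bigcup\mathbf a(x(r))\s A^q\cup\bigcup\mathbf a(x(q))$ that you establish already gives $r_\emptyset\le^\ast q_\emptyset$ directly, so your fallback via generic clubs is not needed.
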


\begin{proof}
    Let $p,q,r\in\mathbb M^\ast$ such that $r\le q$ and $q\le p$.
    By the definition of $\le$ we get that 
    \[ r_\emptyset =\langle A^r\cup \mathbf a(x(r))\big\rangle \Vdash \dot c^q \trle \dot c ^r \text{ and}\]
    \[ q_\emptyset =\langle A^q\cup \mathbf a(x(q))\big\rangle \Vdash \dot c^p \trle \dot c ^q.\]
    Notice that $\mathbf a(x(r))\s \mathbf a (x(q))$, thus 
    \[ r_\emptyset =\big\langle A^r\cup \mathbf a(x(r))\big\rangle\le_{\mathbb M_{\vec{\mathcal U}}} \big\langle A^q\cup \mathbf a(x(q))\big\rangle =q_\emptyset.\]
    Therefore 
    \[ r_\emptyset=\langle A^r\cup \mathbf a(x(r))\big\rangle \Vdash \dot c^p \trle \dot c ^r.\]
\end{proof}

\subsubsection{The essential projection}\label{subsubsec: proj}
In the proof of Main Theorem \ref{1st maintheorem}, the ancestral component is crucial: it allows us to project onto the relevant $\kappa^+$-closed forcing and use reflection in its generic extension to establish reflection of a stationary subset of $\kappa^+$ in the $\mathbb M^\ast$-generic extension.

\begin{definition}
    Let $x\in\mathbb Y$, define
\[ \mathbb X_x:=\{\dot c\in\mathbb T(\dot{\mathbb C})\mid \exists A\in U(\kappa) \text{ s.t. }\langle x,A,\dot c\rangle\in\mathbb M^\ast\}.\]

For $c,d\in\mathbb X_x$ we say that $d\le c$ if and only if there is some $A\in U(\kappa)$ such that $\langle x,A,d\rangle\le_{\mathbb M^\ast}\langle x,A,c\rangle$.
\end{definition}

\begin{lemma}
    There is a projection $\pi_x\colon\nicefrac{\mathbb M^\ast}{\langle x, \kappa\setminus\max(\dom (x)), \check\emptyset\rangle}\ra \mathbb X_x/\sim$.
\end{lemma}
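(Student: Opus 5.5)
The natural candidate is the map that forgets the Magidor part of a condition below $p_x:=\langle x,\kappa\setminus\max(\dom(x)),\check\emptyset\rangle$. For $q\le p_x$ write $q=\langle x(q),A^q,c^q\rangle$. Since $q^0\le p_x^0$, the stem $x(q)$ end-extends $x$ by finitely many new points above $\max(\dom(x))$, each with its own measure-one set. I would define
\[\pi_x(q):=[c^q]_\sim .\]
Here $\sim$ is the equivalence relation on $\mathbb X_x$ induced by the separative quotient of $\le$ on $\mathbb X_x$: $c\sim d$ iff $c\le d$ and $d\le c$, or more generally iff they have the same extensions.

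The first step is to check that $c^q\in\mathbb X_x$. The forcing relation is $q_\emptyset\Vdash\dot c^q\in\dot{\mathbb C}$ with $q_\emptyset=\langle A^q\cup\mathbf a(x(q))\rangle$. Each set in $\mathbf a(x(q))\setminus\mathbf a(x)$ has the form $A^{x(q)}_\beta\cup\{\beta\}$. Such a set is not measure one in $U(\kappa)$, so we cannot simply take it as the witness. Instead, use the ancestral shape of the condition: the whole tree of ancestors is a subset of $\kappa$ of the kind a direct extension of $\langle\mathbf a(x)\cup A\rangle$ can carry. I expect the intended reading is that $\langle x,A,c\rangle\in\mathbb M^\ast$ only requires $\langle A\cup\mathbf a(x)\rangle\Vdash$, with $A$ ranging over measure-one sets. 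Then I would choose $A:=A^q\cup\bigcup_{\beta\in\dom x(q)\setminus\dom x}(A^{x(q)}_\beta\cup\{\beta\})$, intersected with a measure-one set. This $A$ is measure one, being a superset of $A^q$. Moreover $\langle A\cup\mathbf a(x)\rangle$ is compatible-below-equivalent to $q_\emptyset$, since any Magidor condition with empty stem deciding into $q_\emptyset$ uses only points from these sets. From this I would get $\langle x,A,c^q\rangle\in\mathbb M^\ast$, so $c^q\in\mathbb X_x$.

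The second step is order preservation. If $r\le q\le p_x$, then $r_\emptyset\Vdash \dot c^q\trle\dot c^r$, and the same enlargement of $A^r$ gives a single measure-one $A$ with $\langle x,A,c^r\rangle\le_{\mathbb M^\ast}\langle x,A,c^q\rangle$. The same transitivity-style monotonicity as in the lemma above makes the forcing statement hold at the enlarged weaker condition. Also $\pi_x(p_x)=[\emptyset]$ is the top.

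The third step, density, is the main one. Let $q\le p_x$ and let $d\le c^q$ in $\mathbb X_x$, witnessed by $A'$ with $\langle x,A',d\rangle\le\langle x,A',c^q\rangle$. We need $r\le q$ with $c^r\le d$. I would take $r:=\langle x(q),A^q\cap A'\cap(\text{restricted}),d\rangle$ and shrink the lower measure-one sets of the stem $x(q)$ as needed. Since $r_\emptyset\le\langle A'\cup\mathbf a(x)\rangle$, it forces $\dot d\in\dot{\mathbb C}$ and $\dot c^q\trle\dot d$, so $r\in\mathbb M^\ast$ and $r\le q$ with $\pi_x(r)=[d]$.

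The main obstacle, in both the first step and this one, is ensuring that $r_\emptyset$ really extends $\langle A'\cup\mathbf a(x)\rangle$. This requires the new stem points of $x(q)$ and their sets to lie inside $A'$. I would handle it by first replacing $q$ by a direct extension that intersects everything with $A'$. The points $\beta\in\dom x(q)\setminus\dom x$ themselves need $\beta\in A'$. If they are not, I would use the freedom in choosing the witness $A'$: modify $A'$ on a bounded, hence measure-zero, set containing these finitely many points together with their sets. The forcing statement survives because adding a null-set modification yields an equivalent Magidor condition. Passing to $\sim$-classes absorbs the remaining ambiguity in the choice of witnesses.
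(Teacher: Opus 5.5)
Your overall plan matches the paper's: send $q$ to the class of $\dot c^q$ and verify the density clause. However, the density step fails as you set it up, and the proposed repair rests on a false claim.

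You keep $\dot d$ itself as the club component of $r$, so you need $r_\emptyset\le\langle A'\cup\mathbf a(x)\rangle$. When a new stem point $\beta\in\dom x(q)\setminus\dom x$ lies outside $A'$, you enlarge $A'$ by a bounded set and assert that this gives an equivalent Magidor condition. It does not. For $A''\supsetneq A'$ the conditions $\langle A'\cup\mathbf a(x)\rangle$ and $\langle A''\cup\mathbf a(x)\rangle$ are compatible but not equivalent: the weaker one has extensions putting points of $A''\setminus A'$ into the generic club, and on those generics nothing constrains $\dot d$.

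For instance, let $\dot d$ be a genuine end-extension of $\dot c^q$ in $\dot{\mathbb C}$ when $\beta\notin C_G$, and a non-condition such as $\{\kappa\}$ when $\beta\in C_G$. Then a witness avoiding $\beta$ shows $d\le c^q$ in $\mathbb X_x$. But every $r\le q$ has $r^0\Vdash\beta\in C_G$, hence $r^0\Vdash\dot d\notin\dot{\mathbb C}$. Since $r^0\le r_\emptyset$, no $r\le q$ can have $\dot c^r=\dot d$, whatever witness you pick. What has to change is the representative of $[d]_\sim$, not the witness.

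The missing idea is the paper's gluing of names. Fix a witness $B$ for $d\le c^q$ and let $s:=\langle (B\cap A^q)\cup\mathbf a(x)\rangle$. Let $\dot e$ be interpreted as $\dot d$ if $s$ is in the generic, and as $\dot c^q$ otherwise; this is the paper's $\dot c^p\cup\dot d\restriction\langle\emptyset,(B\cap A^p)\cup\mathbf a(x)\rangle$. Put $r:=\langle x(q),A^q\cap B,\dot e\rangle$, leaving the stem alone.
\begin{itemize}
\item If $s$ is in the generic, then $s\le\langle B\cup\mathbf a(x)\rangle$ gives $\dot e=\dot d\in\dot{\mathbb C}$ with $\dot c^q\trle\dot e$.
\item Otherwise, $r_\emptyset\le q_\emptyset$ gives $\dot e=\dot c^q\in\dot{\mathbb C}$.
\end{itemize}
Hence $r\le^\ast q$. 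Moreover $s\Vdash\dot e=\dot d$, so $\dot e\in\mathbb X_x$ and $\dot e\sim\dot d$, i.e.\ $\pi_x(r)=[d]_\sim$.

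A smaller point concerns your Step 1, which assumes that $x(q)$ end-extends $x$ above $\max\dom(x)$ with the old sets $A^x_\beta$ untouched. In that case $\langle A\cup\mathbf a(x)\rangle$ is literally $q_\emptyset$, and your Steps 1--2 are fine; the paper's proof does not address this point at all. But an extension of $p_x$ may also insert points from the $A^x_\beta$ below $\max\dom(x)$, or shrink those sets. Then $\langle A\cup\mathbf a(x)\rangle$ is strictly weaker than $q_\emptyset$, and forcing statements do not pass to weaker conditions, so your ``compatible-below-equivalent'' claim is unjustified there.
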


\begin{proof}
    Let $p\le \langle x, \kappa\setminus\max(\dom(x)),\check\emptyset\rangle$, and define $\pi_x(p):= \dot c^p$.
    Then $\pi_x$ is a projection:
    Let $d\le_{\mathbb X_x} \pi_x(p)$ Thus there is $B\in U(\kappa)$ for which
    \[ \langle B\cup \mathbf a (x)\rangle\Vdash \dot c^p \trle \dot d.\]
    Let $\dot e:= \dot c^p \bigcup \dot d\restriction \langle\emptyset, (B\cap A^p)\cup \mathbf a(x)\rangle$. 
    
    Let $q:= \langle x(p), A^p\cap B, \dot e\rangle$; then clearly $\pi_x(q)=d$ and $q\le^\ast p$.
\end{proof}

\begin{definition}
    For $x\in\mathbb Y$, define:
\[\mathbb D_x:=\big\{\dot c\in \mathbb X_x \big| \exists A \exists \delta<\kappa^+ \langle x, A,\dot c\rangle \in \mathbb M^\ast \text{ and }
\langle A\cup \mathbf a (x) \rangle\Vdash \max(\dot c)=\check \delta \big\}.\]
\end{definition}

\begin{lemma}
    $\mathbb D_x$ is dense in $\mathbb X_x$.
\end{lemma}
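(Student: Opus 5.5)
Given $\dot c\in\mathbb X_x$, I will find $\dot d\in\mathbb D_x$ with $\dot d\le_{\mathbb X_x}\dot c$. Fix $A\in U(\kappa)$ with $\langle x,A,\dot c\rangle\in\mathbb M^\ast$, so that $r:=\langle A\cup\mathbf a(x)\rangle\Vdash\dot c\in\dot{\mathbb C}$. Conditions of $\dot{\mathbb C}$ are closed bounded subsets of $\kappa^+$ avoiding $(E^{\kappa^+}_\kappa)^V$, so $r$ forces that $\max(\dot c)$ exists and is an ordinal below $\kappa^+$. The aim is to decide this value, or more simply to dominate it by a ground-model ordinal $\delta$ and then extend $\dot c$ so that its new maximum is $\check\delta$.

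First, I bound the maximum. The forcing $\mathbb M$ has the $\kappa^+$-c.c. (it has size $\kappa$ under $\gch$), so the set of possible values of $\max(\dot c)$ below $r$ has size at most $\kappa$. Hence there is $\gamma<\kappa^+$ with $r\Vdash\max(\dot c)<\check\gamma$. Next, I pick $\delta\in(\gamma,\kappa^+)$ with $\cf^V(\delta)=\omega$, for instance $\delta=\gamma+\omega$. Such a $\delta$ is not in $(E^{\kappa^+}_\kappa)^V$. I then define the name $\dot d$ so that $r$ forces $\dot d=\dot c\cup\{\check\delta\}$, and $\dot d=\dot c$ below conditions incompatible with $r$; this is done by mixing names in the usual way. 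Below $r$, $\dot d$ is closed and bounded, avoids $(E^{\kappa^+}_\kappa)^V$, end-extends $\dot c$, and has $\max\dot d=\delta$. So $\dot d\in\mathbb T(\dot{\mathbb C})$, $\langle x,A,\dot d\rangle\in\mathbb M^\ast$, and $\langle x,A,\dot d\rangle\le\langle x,A,\dot c\rangle$ holds with the same $A$. This gives $\dot d\le_{\mathbb X_x}\dot c$, and the same $A$ and $\delta$ witness $\dot d\in\mathbb D_x$.

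The only substantive point is the bounding step, which uses the chain condition of $\mathbb M$ and requires that $\dot{\mathbb C}$ consist of bounded closed sets. One might worry that the conditions in $\mathbb X_x$ are only forced by $r$, an "ancestral" condition with empty stem, rather than by stronger conditions. But the chain-condition argument works below any condition, so this causes no difficulty.

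If instead one wants $\delta$ to be an exact decision of $\max(\dot c)$, that would need the Prikry property and could fail. The construction above avoids this by adding a fresh top point $\delta$ beyond every possible value of $\max(\dot c)$.
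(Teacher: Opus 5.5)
Your proof is correct and is essentially the paper's argument: you use the $\kappa^+$-c.c.\ of $\mathbb M$ to bound all possible values of $\max(\dot c)$ below $\langle A\cup\mathbf a(x)\rangle$, then adjoin a single ground-model point $\delta$ above that bound, with the same $A$ witnessing both $\dot d\le_{\mathbb X_x}\dot c$ and $\dot d\in\mathbb D_x$ (the paper takes the successor $\delta=\sup+1$, you take $\gamma+\omega$; either avoids $(E^{\kappa^+}_\kappa)^V$, and your name-mixing is harmless but unnecessary since $\dot c\cup\{\check\delta\}$ already lies in $\mathbb T(\dot{\mathbb C})$). One small slip: $\mathbb M$ does not have size $\kappa$, because its measure-one sets give it size $2^\kappa=\kappa^+$; the $\kappa^+$-c.c.\ comes instead from there being only $|\mathbb Y|=\kappa$ stems, with any two conditions sharing a stem being compatible.
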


\begin{proof}
    Let $\dot c\in \mathbb X_x$, then there is $A\in U(\kappa)$ such that 
    $p:=\langle x,A,\dot c\rangle\in\mathbb M^\ast$, i.e., $\langle A\cup\mathbf a(x)\rangle\Vdash \dot c\in\dot{\mathbb C}$.
    Let
    \[\delta:=\sup\{\gamma<\kappa^+\mid\exists r\le \langle A\cup\mathbf a(x) \rangle (r\Vdash \max \dot c(y)=\check\gamma\})+1.\]

    Since $\mathbb M$ has the $\kappa^+$-chain condition, $\delta<\kappa^+$.
    Let us define $\dot d:= \dot c\cup\{\check \delta\}$.

    Then it holds that $\langle A\cup\mathbf a(x)\rangle\Vdash \dot c\trle \dot d$ and $\langle A\cup\mathbf a(x) \rangle\Vdash \max\dot d=\check \delta$. 
    
    Hence $\langle x, A, \dot d\rangle \le \langle x, A, \dot c\rangle$ and therefore  $\dot d\in\mathbb D_x$ and $\dot d\le_{\mathbb X_x}\dot c$.
\end{proof}

\begin{lemma}
    $\mathbb D_x$ is $\kappa^+$-closed.
\end{lemma}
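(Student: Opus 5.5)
The plan is to show that any $\le_{\mathbb X_x}$-decreasing sequence $\langle \dot c_i\mid i<\theta\rangle$ in $\mathbb D_x$ with $\theta<\kappa^+$ (so $\theta\le\kappa$) has a lower bound in $\mathbb D_x$. The bound will be the name for the union of the $\dot c_i$ with its supremum added on top. The measure-one sets witnessing the individual conditions and the individual extensions will be amalgamated into one set, as in the proof that $\mathbb D_x$ is dense.

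\textbf{Step 1: collect witnesses.} For each $i<\theta$ fix $A_i\in U(\kappa)$ and $\delta_i<\kappa^+$ with $\langle x,A_i,\dot c_i\rangle\in\mathbb M^\ast$ and $\langle A_i\cup\mathbf a(x)\rangle\Vdash\max(\dot c_i)=\check\delta_i$. For each $i<j<\theta$ fix $B_{i,j}\in U(\kappa)$ with $\langle B_{i,j}\cup\mathbf a(x)\rangle\Vdash \dot c_i\trle\dot c_j$. The $\delta_i$ are ground-model ordinals and they are nondecreasing, since $\dot c_i\trle\dot c_j$ is forced; so $\delta:=\sup_i\delta_i<\kappa^+$. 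This is where the hypothesis $\theta<\kappa^+$ is used, together with the regularity of $\kappa^+$.

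\textbf{Step 2: amalgamate the measure-one sets.} Fix a bijection between $\theta$ and the set of indices $\{i\}\cup\{(i,j)\}$. Let $A^\ast$ be the intersection of all the $A_i$ and $B_{i,j}$ when $\theta<\kappa$, using $\kappa$-completeness of $U(\kappa)$. When $\theta=\kappa$, let $A^\ast$ be their diagonal intersection, with everything at or below $\max\dom(x)$ removed. In both cases $A^\ast\in U(\kappa)$, using normality in the second case.

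\textbf{Step 3: define the bound and check it.} Let $\dot d$ be the name for $\bigcup_i\dot c_i\cup\{\check\delta\}$. We must show that $\langle A^\ast\cup\mathbf a(x)\rangle$ forces three things: every $\dot c_i$ lies in $\dot{\mathbb C}$, the $\dot c_i$ cohere, and $\delta\notin (E^{\kappa^+}_\kappa)^V$. The last two together give $\dot d\in\dot{\mathbb C}$ and $\dot c_i\trle\dot d$ for every $i$; and $\max\dot d=\check\delta$ holds by construction, so $\dot d\in\mathbb D_x$ is a lower bound. For the third point, note that $\cf^V(\delta)=\cf(\theta)$ or $\delta$ is a successor. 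In the case $\cf(\theta)=\kappa$ we must know that $\delta$ is forced to avoid $(E^{\kappa^+}_\kappa)^V$. I would handle this by observing that $\kappa$ is forced to have cofinality $\rho<\kappa$, so the club being shot needs only to avoid the ground-model points of cofinality $\kappa$. Alternatively, one can add an extra successor point, or thin the sequence to a cofinal subsequence of smaller cofinality before taking the union. I expect the cofinality bookkeeping here to be minor.

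\textbf{Step 4: the main obstacle.} The genuinely hard step is the case $\theta=\kappa$. Take $G$ generic through $\langle A^\ast\cup\mathbf a(x)\rangle$. The diagonal intersection only guarantees that points of the Magidor club $C_G$ above $i$ lie in $A_i\cap\bigcap_{j<i}B_{j,i}$. Points of $C_G$ below $i$ need not lie there, so it is not immediate that $\langle A_i\cup\mathbf a(x)\rangle\in G$. I would argue through the Mathias-type characterization of Magidor genericity. That characterization says $\langle B\cup\mathbf a(x)\rangle\in G$ for every $B\in U(\kappa)$ for which, in addition, $C_G$ meets the lower measures correctly. Using it, I would show by a density argument that the set $I_G:=\{i<\kappa\mid\langle A_i\cup\mathbf a(x)\rangle\in G\}$ is unbounded in $\kappa$. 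The idea is that for each $i$, the finitely many stem points below $i$ can be absorbed into $\mathbf a$, using that the $A^x_\beta$ components are themselves measure-one for the lower measures. Coherence of the $\dot c_i$ then holds along $I_G$, and this already makes $\bigcup_{i\in I_G}\dot c_i^G$ equal to the intended union, since the $\dot c_i$ are forced to be increasing. If this density argument fails, my fallback is to define $\dot d$ by mixing over the antichain deciding $\min I_G$, and to interpret $\dot d$ as $\bigcup_{i\in I_G}\dot c_i\cup\{\delta\}$.
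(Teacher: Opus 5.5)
\textbf{There is a genuine gap in the case $\theta=\kappa$.} Your treatment of sequences of length $\theta<\kappa$ is fine. The case $\theta=\kappa$ is the real content of the lemma, and the gap sits exactly in the cofinality issue you call minor.

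\emph{The union cannot be closed.} Suppose the $\delta_i$ are not eventually constant. Then any union of the $\dot c_i$ over an \emph{unbounded} set of indices has $\delta=\sup_i\delta_i$ as a limit point. Closedness therefore forces $\delta$ into $\dot d$. But $\cf^V(\delta)=\kappa$, so $\delta\in(E^{\kappa^+}_\kappa)^V$, which is precisely the set the club must avoid. None of your remedies help:
\begin{itemize}
\item that $\kappa$ acquires cofinality $\rho$ is irrelevant, since the requirement is about ground-model cofinality;
\item adding a successor point on top does not remove the limit point $\delta$;
\item passing to a cofinal subsequence does not change the supremum.
\end{itemize}

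\emph{The fallback fails too.} The same objection applies to $\bigcup_{i\in I_G}\dot c_i\cup\{\delta\}$ whenever $I_G$ is unbounded. When $I_G$ is bounded without a maximum, that set is not closed, because $\sup_{i\in I_G}\delta_i$ is missing. Coherence along $I_G$ would also need the conditions $\langle B_{i,j}\cup\mathbf a(x)\rangle$, not only $\langle A_i\cup\mathbf a(x)\rangle$.

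\emph{$I_G$ need not be unbounded.} The density claim is false. Let $\tau:=\min(C_G\setminus(\max\dom(x)+1))$. Once $\tau$ is decided, $\langle A_i\cup\mathbf a(x)\rangle\in G$ requires $\tau\in A_i$. Nothing guarantees this for $i\ge\tau$; for instance the $A_i$ may be disjoint from $i+1$. And $\mathbf a(x)$ is fixed by $x$, so it cannot absorb $\tau$.

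\textbf{The missing idea (this is how the paper argues).} Stop the bound at the first generic Magidor point above $\zeta:=\max\dom(x)$, and let the witness for $\dot d\le_{\mathbb X_x}\dot c_i$ depend on $i$.
\begin{enumerate}
\item Put $B:=\triangle_{j}\big(A_j\cap\bigcap_{i<j}B_{i,j}\big)$, restricted above $\zeta$.
\item Let $\dot d$ name $\bigcup_{i<\tau}\dot c_i\cup\{\eta_\tau,\delta+1\}$, where $\tau$ is the least point of $C_G$ above $\zeta$ and $\eta_\tau:=\sup_{i<\tau}\delta_i$.
\item In any generic through $\langle B\cup\mathbf a(x)\rangle$, all club points above $\zeta$ lie in $B$ and are $\ge\tau$. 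Hence for $i<j<\tau$ the generic passes through $\langle A_j\cup\mathbf a(x)\rangle$ and $\langle B_{i,j}\cup\mathbf a(x)\rangle$. So the $\dot c_i$ with $i<\tau$ are coherent conditions.
\item $\eta_\tau$ is a supremum of fewer than $\kappa$ ordinals, so its $V$-cofinality is $<\kappa$. And $\delta+1$ is a successor. Therefore $\dot d\in\mathbb D_x$.
\item To compare with $\dot c_i$, use the witness $B\setminus(i+1)$. It forces $\tau>i$, hence $\dot c_i\trle\dot d$.
\end{enumerate}
The definition of $\le_{\mathbb X_x}$ only asks for \emph{some} measure-one set for each pair. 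So you never need a single condition forcing all the $\dot c_i$ to cohere, which is what your Steps 3 and 4 try, and fail, to arrange.
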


\begin{proof}
    Let $\langle c^i\mid \max(x)<i<\kappa\rangle$ be a decreasing sequence in $\mathbb D_x$.
    Let $i<j<\kappa$. 
    Since $c^j\le_{\mathbb X_x} c^i$, there is $A^{j,i}$ such that $\langle x, A^{j,i}, c^j\rangle \le \langle x, A^{j,i}, c^i\rangle$.
    For $i<\kappa$, since $c^i\in \mathbb D_x$, there are $\delta^i<\kappa^+$ and $A^i\in U(\kappa)$ such that $\langle x, A^i, c^i\rangle\in \mathbb M^\ast$ and it holds that $\langle x, A^i\cup \mathbf a(x)\rangle \Vdash \max (\dot c^i)=\check \delta^i$. Let $j<\kappa$ define $\eta_j :=\sup_{i<j}\delta^i$.
    
    Define $\delta:= (\sup\{\delta^i\mid i<\kappa\})+1$.
    Let $\max (x)=\zeta$.
    For all $\zeta <j<\kappa$ define $B^j:= \underset{\iota<j}{\bigcap} A^{j,\iota}\cap A^j$ and define $B:= \underset{j<\kappa}{\triangle} B^j$.
    
    We want to define $\dot c\in \mathbb T(\dot{\mathbb C})$. 
    Let $\zeta<j<\kappa$ and define 
    \[\dot b_0^j:=\{\langle\check\gamma,p\rangle\mid \exists \zeta<i<j, p\le \langle \kappa\setminus \zeta\cup \mathbf a(x)\rangle \text{ with } j\in\dom (p) \text{ such that } p\Vdash \check\gamma\in c^i\},\]
    and let
    \[ \dot b_1^j:=\{\langle\check\eta_j,p\rangle\mid p\le \langle \kappa\setminus \zeta\cup \mathbf a(x)\rangle \text{ with } j\in\dom (p)\},\]
    finally take $\dot b^j=\dot b^j_0\cup\dot b^j_1$.
    Now we can define
    \[ \dot c:= \{\check{\delta}\}\bigcup_{\zeta<j<\kappa} \dot b^j .\]
    Let $\zeta<i<\kappa$. We want to show that $c\le_{\mathbb X_x} c^i$. 
    We claim that 
    $\langle (B\setminus i)\cup \mathbf a(x)\rangle\Vdash \dot c^i\trle \dot c$: \\
    Let $p\le\langle (B\setminus i)\cup \mathbf a(x)\rangle$. Then there is some $\rho\in B\setminus i$ and $q\le p$ such that $\rho\in\dom(x(q))$.
    Then by the definition of $\dot c$, $q\Vdash \dot c=\{\check\delta^\rho,\check\delta\}\cup \bigcup_{j<\rho}\dot c^j$.
    Let $\zeta<i<j<\rho$. Then $B\setminus \rho\s B^j$ and clearly $q\le\langle A^{j}\cup \mathbf a(x)\rangle\le\langle A^{j,i}\cup \mathbf a(x)\rangle$, thus $q\Vdash \dot c^i\trle \dot c^j$.
    This implies that $q\Vdash \dot c^i\trle \dot c$ \footnote{Notice that for all $l<i$, $\langle (B\setminus i)\cup \mathbf a(q(x))\rangle\Vdash \dot c^l\trle \dot c^i$ because $\langle x(q), (B\setminus i)\cup \mathbf a(x)\rangle\le \langle A^i\cup \mathbf a(x)\rangle$.} 
    and thus $\langle (B\setminus i)\cup \mathbf a(x)\rangle \Vdash \dot c^i\trle \dot c$.
 
    Hence for every $\zeta<i<\kappa$ $\langle x,B\setminus i,c\rangle\le \langle x, B, c^i\rangle$ and therefore $\dot c\le_{\mathbb X_x} \dot c^i$.
\end{proof}

\subsubsection{\texorpdfstring{The Strong Prikry Property and the preservation of $\kappa^+$}{ Prikry Property}}\label{sec:prikryproperty}

\begin{definition}
We call a tree $T\s[\alpha]^{<\omega}$ \emph{$\alpha$-fat} if there is $n<\omega$ such that $h(T)=n$ and for any non-maximal $t\in T$ there is some $i<o^{\mathcal {U}}(\alpha)$ with $\Succ_T(t)\in U_{\alpha,i}$.

For some condition $p\in\mathbb M^\ast$ and an $\alpha$-fat tree $T$ we say that $T$ is \emph{compatible} with $p$ if for any maximal $t\in T$, $t\in \big(\mathbf{a}(x(p))\cup A^p\big)\cap\alpha$ and either $\alpha=\kappa$ or $\alpha\in \dom (p)$.
\end{definition}

\begin{definition}  Let $p\in \mathbb M^\ast$.
\begin{enumerate}
    \item Let $\gamma\in \mathbf a(x(p))\cup A^p$. We define $p+\langle\gamma\rangle$ as follows:
    \begin{itemize}
        \item If $\gamma\in A^p$ then we take $p+\langle\gamma\rangle:=\langle x(p),(\gamma, A^p\downarrow\gamma), A^p\setminus (\gamma+1),\dot c^p\rangle$.
        \item Otherwise, there is some $k<\ell(p)$ such that $\gamma\in A^p_k$ and we take\\
        $p+\langle \gamma\rangle:= \langle (\alpha^p_0,A^p_0),\ldots\allowbreak,(\alpha^p_{k-1},A^p_{k-1}), (\gamma, A_k^p\downarrow\gamma),(\alpha^p_{k},A^p_{k}\setminus\gamma+1),\break \ldots,(\alpha^p_{\ell(p)-1},A^p_{\ell(p)-1}),\dot c^p\rangle$.
    \end{itemize}
    Where $A\downarrow \gamma:=
    \begin{cases}
        \emptyset & o^{\mathcal {U}}(\gamma)=0,\\
        \{\beta\in A\cap \gamma\mid o^{\mathcal {U}}(\beta)<o^{\mathcal {U}}(\gamma) \} & otherwise.
    \end{cases}$
    \item Let $t=\langle\gamma_0,\ldots,\gamma_{n-1}\rangle$ such that for all $k<n$, $\gamma_k\in \mathbf a(x(p))\cup A^p$.
    We define $p+t:= p +\langle\gamma_0\rangle +\dots +\langle\gamma_{n-1}\rangle$.
\end{enumerate}
\end{definition}

\begin{theorem}[The Strong Prikry Property of $\mathbb M^\ast$] \label{Strong Prikry Property}
    Let $p\in\mathbb M^\ast$ and let $D\s \mathbb M^\ast$ open dense. 
    Then there are $p^\ast\le^\ast p$ and trees $T_{0},\ldots,T_{\ell(p)}$, where $T_k$ is $\alpha^p_k$-fat for $k < \ell(p)$ and $T_{\ell(p)}$ is $\kappa$-fat, such that for every $k\le \ell(p)$ and any maximal branch $t_k\in T_k$
    $p^\ast+ t_0 +\dots + t_{\ell(p)}\in D$.
\end{theorem}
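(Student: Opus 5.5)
We follow the standard proof of the strong Prikry property for Magidor forcing and add two ingredients: the term forcing component $\dot c^p$ is handled by fusion, and the fact that $c^p$ is only \emph{ancestrally} forced into $\dot{\mathbb C}$ lets us amalgamate names below a single direct extension. The first step is a diagonalization lemma for the club component. Fix $p$ and a stem $x=x(p)$. Suppose that for each finite sequence $t$ of ordinals from $\mathbf a(x)\cup A^p$ we have chosen some $q_t\le^\ast p+t$ in $\mathbb M^\ast$. We want a single $p'\le^\ast p$ such that $p'+t\le q_t$ for all $t$ in a measure-one set of sequences.

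The $\mathbb M$-parts are handled by the usual diagonal intersections of the measure-one sets, using normality of each $U_{\alpha,i}$ and coherence. The $\dot{\mathbb C}$-parts are handled by a gluing construction in the spirit of the $\kappa^+$-closure lemma for $\mathbb D_x$ proved above. We define $\dot c^{p'}$ by cases on the generic: it equals $\dot c^{q_t}$ on the part of $\mathbb M$ lying below $q_t^0$, where $t$ is the sequence of new Magidor points that the generic chooses in the relevant intervals. To make $\dot c^{p'}$ a single name that is ancestrally forced into $\dot{\mathbb C}$ by $p'_\emptyset=\langle A^{p'}\cup\mathbf a(x)\rangle$, we need the different $q_t$ to be pairwise compatible on overlaps. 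Two tools give this. First, $\kappa^+$-closure of the direct extension order on names lets us extend $\dot c^p$ successively through $\le\kappa$ many $t$'s: there are only $\kappa$ many stems by $|\mathbb Y|=\kappa$, and the dense set $\mathbb D_x$ supplies decided maxima. Second, each $q_t$ is stored on the cone determined by $t$ being in the generic. So the first technical lemma will be: \emph{given $\kappa$ many direct extensions $q_t\le^\ast p+t$, there is $p'\le^\ast p$ with $p'+t\le^\ast q_t$ for measure-one many $t$.} This works because the ancestral requirement refers to $\mathbf a(x(p+t))$, which contains $\mathbf a(x)$ together with the sets attached to $t$.

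With this lemma in hand we run the Rowbottom-style argument. For each $t$ (an $\ell(p)+1$-tuple of finite increasing sequences placed in the intervals below $\alpha^p_0,\dots,\alpha^p_{\ell(p)-1},\kappa$), let $q_t\le^\ast p+t$ be an element of $D$ if one exists, and $q_t=p+t$ otherwise. Amalgamate these by the lemma to get $p_1$. Since $D$ is open dense, some extension $r\le p_1$ lies in $D$. Such an $r$ is, up to direct extension, of the form $p_1+t$ followed by a direct extension, for the $t$ of points $r$ added. Hence for that $t$ the choice $q_t\in D$ was available, and $p_1+t\le q_t$ puts $p_1+t$ in $D$.

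It remains to uniformize the lengths and build fat trees. For each interval $k$ and each level we colour the successor points by whether some end-extension gets into $D$ with a given total length pattern. Normality of $U_{\alpha,i}$ and the splitting of $A$ by Mitchell order $o^{\mathcal U}(\beta)=i$ give a homogeneous measure-one set $\Succ_{T_k}(s)\in U_{\alpha^p_k,i}$ for some $i$. We shrink $A^{p^\ast}$ and the $A^{p^\ast}_k$ to these diagonal intersections. The length $n_k$ of $T_k$ is fixed by homogeneity, which is possible because there are countably many length patterns and the measures are $\kappa$-complete. The trees across different intervals are handled one interval at a time, from the top ($\kappa$) downward, since the choices below $\alpha^p_k$ are made over ${\le}\alpha^p_k$ many possibilities and do not disturb the completeness needed above.

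The main obstacle is the amalgamation lemma for the club names. The Magidor part is standard, but we must check that the glued name is forced to be a condition in $\dot{\mathbb C}$ by $p'_\emptyset$ alone (ancestrally), rather than only below $p'$. We also need that $p'_\emptyset\Vdash\dot c^p\trle\dot c^{p'}$, which is exactly what is needed for $p'\le^\ast p$. Our first attempt will mimic the closure proof for $\mathbb D_x$: we use the decided maxima from $\mathbb D_x$ to insert sup points $\eta$, so the glued closed set stays closed and disjoint from $(E^{\kappa^+}_\kappa)^V$. The points $\eta$ have cofinality at most $\kappa$ in $V$ but become of cofinality $\rho<\kappa$ once $\kappa$ is singularized, and we exploit that.
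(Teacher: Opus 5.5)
\textbf{The amalgamation lemma your argument rests on is false.} The reason is that you have misread what the ancestral requirement demands. For $p'+t\le q_t$ one needs $(p'+t)_\emptyset\Vdash\dot c^{q_t}\trle\dot c^{p'}$. Here $(p'+t)_\emptyset=\langle A^{p'+t}\cup\mathbf a(x(p'+t))\rangle$ is a \emph{stemless} Magidor condition, whose set is obtained from $A^{p'}\cup\mathbf a(x(p'))$ by deleting some points below $\max t$. It does not put $t$ into the generic club, and any two such conditions (for different $t$) are compatible. So $\dot c^{p'}$ must end-extend $\dot c^{q_t}$ also on generics passing through some other $t'$, where your name ``glued by cases'' equals $\dot c^{q_{t'}}$.

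Concretely, let $p=\langle A,\check\emptyset\rangle$ and, for $\gamma\in A$, let $q_{\langle\gamma\rangle}:=\langle(\gamma,A\downarrow\gamma),A\setminus(\gamma+1),\{\check\delta_\gamma\}\rangle\le^\ast p+\langle\gamma\rangle$, where the $\delta_\gamma<\kappa^+$ are pairwise distinct successor ordinals. Suppose $p'\le^\ast p$ had $p'+\langle\gamma\rangle\le q_{\langle\gamma\rangle}$ and $p'+\langle\gamma'\rangle\le q_{\langle\gamma'\rangle}$ for some $\gamma\neq\gamma'$. The stemless condition $\langle A^{p'}\setminus(\max\{\gamma,\gamma'\}+1)\rangle$ extends both $(p'+\langle\gamma\rangle)_\emptyset$ and $(p'+\langle\gamma'\rangle)_\emptyset$, so it would force $\min\dot c^{p'}=\delta_\gamma=\delta_{\gamma'}$. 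Hence independently chosen $q_t\le^\ast p+t$ cannot be amalgamated even for two values of $t$, and your step ``choose $q_t\in D$ for every $t$, then amalgamate'' collapses.

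This stem-independence of the club part is exactly what the ancestral definition is for (it is what yields the projection onto $\mathbb X_x$). The paper uses it the other way round: conditions are chosen \emph{sequentially}. In Lemma~\ref{Large Dichotomy for SPP}, at stage $\alpha$ one picks $q_{\beta_\alpha}\le^\ast p_\alpha+\langle\beta_\alpha\rangle$ in $D$ below the \emph{current} condition. Its name is then moved back to the original stem, $p_{\alpha+1}=\langle x(q),A^{q_{\beta_\alpha}},c^{q_{\beta_\alpha}}\rangle$; this is legitimate only because $c^{q_{\beta_\alpha}}$ is forced into $\dot{\mathbb C}$ by a stemless condition. The theorem then runs this along an enumeration of $\mathbb Y$ in which extensions come later, and along the sets $D^{k+1}$, with diagonal intersections. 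The club names form one decreasing chain, and nothing is glued by cases.

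\textbf{The closure you invoke does not exist, and your final remark points the wrong way.} $\dot{\mathbb C}$ shoots a club disjoint from $(E^{\kappa^+}_\kappa)^V$, which is defined in $V$. So a limit point of $V$-cofinality $\kappa$ stays forbidden even though its cofinality becomes $\rho$ once $\kappa$ is singularized. Consequently a $\le^\ast$-decreasing $\kappa$-sequence with a fixed stem and decided, strictly increasing maxima has no $\le^\ast$-lower bound: the bound's stemless ancestor would force its name to contain all the maxima, hence their supremum.

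The $\kappa^+$-closure proved for $\mathbb D_x$ is for the looser order of $\mathbb X_x$, where the set witnessing $\dot c\le\dot c^i$ is $B\setminus i$. That set forces the first point of the generic club above $\max\dom(x)$ to be $\ge i$. So on each generic only the $\dot c^i$ with $i$ below that point must be end-extended, and the inserted limits $\eta_j=\sup_{i<j}\delta^i$ have $V$-cofinality $<\kappa$. The paper's lower bounds $c'\le\langle c^{p_\alpha}\mid\alpha<\kappa\rangle$ are meant in this sense. In any $\kappa$-length fusion of this kind, what must be checked is that on every generic through a $(p'+t)_\emptyset$ only boundedly many names of the chain have to be end-extended. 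You correctly identify the ancestral requirement as the main obstacle, but neither mechanism you propose for it (gluing by cases on the generic, or using cofinality $\rho$ in the extension) can work.
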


\begin{definition}
    Let $p\in\mathbb M^\ast$ and $D\s\mathbb M^\ast$ open dense.
    We define $B(p,D):=\{\beta\in A^p\mid  p+\langle \beta\rangle\in D\}$.
\end{definition}

\begin{lemma}\label{Large Dichotomy for SPP} Let $q\in\mathbb M^\ast$ and $D\s \mathbb M^\ast$ open dense. Then there is $q'\le^\ast q$ such that one of the following holds:
    \begin{enumerate}
        \item There is $i<o^{\mathcal {U}}(\kappa)$ with $B(q',D)\in U_{\kappa,i}$.
        \item For all $\beta\in A^{q'}$ there is no $r\le^\ast q'+\langle\beta\rangle$ with $r\in D$.
    \end{enumerate}
\end{lemma}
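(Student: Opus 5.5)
The plan is the usual Magidor-style dichotomy argument: integrate, over all $\beta\in A^q$, the direct extensions witnessing membership in $D$ into a single direct extension of $q$, and then split according to which measures $U_{\kappa,i}$ concentrate on the good set. The new point is the $c$-coordinate, which has to be amalgamated ancestrally, in the same way as in the closure argument for $\mathbb D_x$.

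\textbf{Step 1 (choosing witnesses).} For each $\beta\in A^q$, ask whether there is $r\le^\ast q+\langle\beta\rangle$ with $r\in D$. If there is, fix one and call it $r_\beta$. Since $r_\beta$ is a direct extension, it has the form
\[
r_\beta=\langle x(q+\langle\beta\rangle)',\,A^{r_\beta},\,\dot c^{r_\beta}\rangle .
\]
Here the lower part below $\beta$ may shrink the sets $A^q_k$ and $A^q\downarrow\beta$, but it keeps the same ordinals. If there is no such $r$, set $r_\beta:=q+\langle\beta\rangle$.

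\textbf{Step 2 (choosing the good set).} Let $G=\{\beta\in A^q\mid r_\beta\in D\}$. Then:
\begin{itemize}
\item If $G\in U_{\kappa,i}$ for some $i<o^{\mathcal U}(\kappa)$, we aim for alternative (1).
\item Otherwise, for every $i$, $A^q\setminus G\in U_{\kappa,i}$. We then shrink $A^q$ to $A^q\setminus G$, which is still in $\bigcap_i U_{\kappa,i}$ and so is a legitimate top measure-one set. This gives alternative (2) directly, with no change to $c^q$, since extending $q'+\langle\beta\rangle$ directly extends $q+\langle\beta\rangle$.
\end{itemize}

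\textbf{Step 3 (integration in case (1)).}
\begin{itemize}
\item \emph{Lower parts.} The number of possible lower parts below $\beta$ is $2^\beta<\kappa$ by $\gch$. So the standard Magidor integration works: for each $k$ we can shrink the $A^q_k$ (the intersection of fewer than $\alpha_k$-many... or, if needed, use a normality/diagonal argument on $U_{\kappa,i}$) so that on a $U_{\kappa,i}$-large set the lower parts of the $r_\beta$ stabilize to a common refinement. For the part below $\beta$ coming from $A^q\downarrow\beta$, we use coherence, $U_{\kappa,i}\restriction$ reflection: the sets $A^{r_\beta}\cap\beta$ integrate to a set $A^\ast$ with $A^\ast\cap\beta\subseteq A^{r_\beta}\downarrow\beta$ for $\beta\in A^\ast$.
\item \emph{Top set.} Define the new top set by $A^{q'}:=\triangle_\beta A^{r_\beta}\cap A^\ast$, together with the part on which $G$ fails for other measures. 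Concretely, for each $i$ take either $G$ or its complement, whichever lies in $U_{\kappa,i}$, intersect with the diagonal intersection, and take the union over $i$.
\item \emph{The term $\dot c^{q'}$.} Define it by mixing, exactly as $\dot b^j_0$ was defined in the proof that $\mathbb D_x$ is $\kappa^+$-closed. That is, $\dot c^{q'}$ is the name which, below conditions whose stem contains $\beta$ as the next point above $x(q)$, is interpreted as $\dot c^{r_\beta}$; below all other conditions it is interpreted as $\dot c^q$. Because the ancestral condition $q'_\emptyset=\langle A^{q'}\cup\mathbf a(x(q'))\rangle$ forces that the first new point added above the old stem is some $\beta\in A^{q'}$, and $r_\beta{}_\emptyset$ forces $\dot c^{r_\beta}\in\dot{\mathbb C}$ and $\dot c^q\trle\dot c^{r_\beta}$, it follows that $q'_\emptyset\Vdash \dot c^{q'}\in\dot{\mathbb C}$ and $\dot c^q\trle\dot c^{q'}$. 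Hence $q'\le^\ast q$.
\item \emph{Conclusion.} Finally $q'+\langle\beta\rangle\le r_\beta$ for $\beta\in G\cap A^{q'}$, since the mixed name is forced to equal $\dot c^{r_\beta}$ below $(q'+\langle\beta\rangle)_\emptyset$. As $D$ is open, $q'+\langle\beta\rangle\in D$, so $B(q',D)\in U_{\kappa,i}$.
\end{itemize}

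\textbf{Main obstacle.} The delicate point is the mixing of $\dot c$. "The next point above $\max\dom(x(q))$ is $\beta$" is not decided by a single condition, since points from lower $A^q_k$'s or from $A\downarrow\gamma$ below $\beta$ may be added later. I would define the name via the \emph{least} element of the generic club above $\max\dom x(q)$ lying in the top part, and check that $(q'+\langle\beta\rangle)_\emptyset$ decides this least element to be $\beta$. This holds because $A^{q'}\cap\beta$ has been removed from the top set of $q'+\langle\beta\rangle$ and sits only in $A\downarrow\beta$. A subtlety is whether the points in $A\downarrow\beta$ count as "least above"; if they do, I would instead index by the maximal-order point of $\mathbf a$ at the top, namely $\beta$ itself, which is decided.
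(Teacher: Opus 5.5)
There is a genuine gap, and it sits exactly at the point you flagged: the $c$-coordinate cannot be amalgamated by mixing. The ancestral condition
\[
(q'+\langle\beta\rangle)_\emptyset=\langle A^{q'}\cup\mathbf a(x(q'+\langle\beta\rangle))\rangle
\]
is a Magidor condition with \emph{empty} stem. It does not force $\beta$ into the generic club. Its measure-one set also still contains $A^{q'}\downarrow\beta$, through $\mathbf a$. So neither ``the least new point above $\max\dom x(q)$'' nor ``the maximal-order point'' is decided to be $\beta$.

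No other rule can fix this either. For $\beta\neq\beta'$ in your good set, the sets of $(q'+\langle\beta\rangle)_\emptyset$ and $(q'+\langle\beta'\rangle)_\emptyset$ both contain a tail of $A^{q'}$, so these two empty-stem conditions are compatible. A single name $\dot c^{q'}$ with $q'+\langle\beta\rangle\le r_\beta$ and $q'+\langle\beta'\rangle\le r_{\beta'}$ would therefore have to be forced, below a common extension, to end-extend both $\dot c^{r_\beta}$ and $\dot c^{r_{\beta'}}$. You chose the witnesses $r_\beta$ independently, so nothing makes these compatible. Since $D$ is open, you could even have end-extended each by a different top point, making them pairwise incompatible. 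Your verification that the mixed name gives a condition $q'\le^\ast q$ can be made to work via $\min(C\setminus(\max\dom x(q)+1))$. What fails is the concluding claim $q'+\langle\beta\rangle\le r_\beta$.

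The missing idea is that the witnesses must be chosen coherently rather than in parallel. The paper does this with a $\kappa$-length recursion. At stage $\alpha$ it seeks a witness $q_{\beta_\alpha}\le^\ast p_\alpha+\langle\beta_\alpha\rangle$ in $D$, where $p_\alpha$ already carries the previous witness's $c$-part: $p_{\alpha+1}=\langle x(q),A^{q_{\beta_\alpha}},c^{q_{\beta_\alpha}}\rangle$, with lower bounds taken at limits. The $c$-parts then form a decreasing $\kappa$-sequence. A single lower bound $c'$ comes from the $\kappa^+$-closure of the $c$-coordinate (the $\mathbb D_x$ lemma), and it extends every witness's $c$ with no mixing.

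The cost is that the set of $\beta$ admitting a direct-extension witness can shrink as $c$ is strengthened. So the paper tracks the sets $B_\alpha$ of currently good points and diagonalizes to $B'=\triangle_\alpha B_\alpha$. It makes the (1)/(2) case split on $B'$, not on your initial set $G$, using continuity of $\alpha\mapsto\beta_\alpha$ to restrict to $\beta_\xi=\xi$. Your handling of the Magidor coordinates (stabilizing the lower stem, integrating the $A^{r_\beta}_\beta$ via $U_{\kappa,i}$ and coherence, and the case (2) argument) is otherwise in line with the paper.
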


\begin{proof}[Proof of the Lemma]
Let $p_0=\langle x(q), A^q\setminus B(q,D),c^q\rangle$.
If there is some $i$ with $B(p_0,D)\in U_{\kappa,i}$ then $(1)$ holds for $p_0$.
Otherwise let \[B_0:=\{\beta\in A^{p_0}\setminus B(p_0,D)\mid\exists q_\beta\le^\ast q+\langle \beta\rangle\wedge q_\beta\in D\}.\]
If $B_0\neq \emptyset$ take $\beta_0=\min(B_0)$ and fix $q_{\beta_0}\le^\ast q+\langle \beta_0\rangle$ with $q_{\beta_0}\in D$.
Let $p_1:=\langle x(q),A^{p_0}, c^{q_{\beta_0}}\rangle$. 
Notice that $p_1+\langle\beta_0,A^{q_{\beta_0}}_{\beta_0}\rangle=q_{\beta_0}\in D$.

Let us define $p_\alpha,B_\alpha,\beta_\alpha,q_\alpha$ by induction on $\alpha<\kappa$: 
Assume that for all $\gamma<\alpha$ $p_\gamma,B_\gamma,\beta_\gamma$ and $q_{\beta_\gamma}$ were fixed and defined.
For $\alpha=\gamma+1$ successor let $p_\alpha:=\langle x(q),A^{q_{\beta_{\gamma}}},c^{q_{\beta_{\gamma}}}\rangle$.
Let 
\[B_{\alpha}:=\{\xi\in A^{p_\alpha}\setminus B(p_\alpha,D)\mid\exists q_\xi\le^\ast (p_\alpha+\langle \xi\rangle) \wedge q_\xi\in D\}.\]
For $\alpha$ limit let $p_\alpha\le^\ast\big\langle \langle x(q), A^{p_\gamma}, c^{p_\gamma}\rangle\big|\gamma<\alpha\big\rangle$ and let $B_\alpha=\underset{\gamma<\alpha}{\bigcap}B_\gamma$.
If $B_\alpha\neq\emptyset$ let $\beta_{\alpha}=\min(B_{\alpha})$ and fix $q_{\beta_\alpha}\le^\ast p_\alpha+\langle \beta_{\alpha}\rangle$ with $q_{\beta_{\alpha}}\in D$.

If there is some $\alpha$ such that $\kappa\setminus B_\alpha\in U(\kappa)$ then either (1) or (2) holds for $p_\alpha$.

Let $A':=\triangle_{\alpha<\kappa} A^{p_\alpha}$, $B':=\triangle B_\alpha$, $c'\le\langle c^{p_{\alpha}}\mid \alpha<\kappa\rangle$ and let $p'=\langle x(q),A',c'\rangle$.

If there is some $i$ such that $B'\in U_{\kappa,i}$,
then fix some $x$ such that $S_x=\{ \xi<\kappa\mid x(q_{\beta_\xi}\restriction\beta_\xi)=x\}\in U_{\kappa,i}$.
Notice that since $\alpha\mapsto\beta_\alpha$ is continuous we can take $B_i=\{\xi\in B'\cap S_x\mid \beta_\xi=\xi\}\in U_{\kappa,i}$.
Let $A_i^0=[\alpha\mapsto A^{q_\alpha}_\alpha]_{U_{\kappa,i}}$.
Notice that $A_i^0\in \bigcap_{j<i} U_{\kappa,j}$.
Let $A^1_i=\{\xi\in A'\cap B_i\mid A_i^0\cap\xi\in U(\xi) \}$, 
$A^1_i\in U_{\kappa,i}$. Let $A_i^2:=\{\xi\in A\cap B_i\mid A_i^1\cap \xi\in U_{\xi,i}\}\in \cap_{j>i} U_{\kappa,j}$.

Let $A''=A'\cap \big( A_i^0\cup A_i^1\cup A_i^2\big)\in U(\kappa)$ and let $p''=\langle x, A'',c'\rangle$. Then for all $\beta\in B=A^1_i$
$p''+\langle\beta\rangle\le^\ast q_\beta\in D$.

If there is no $i$ such that $B'\in U_{\kappa,i}$,
then $\{\xi\mid \exists r'\le p''+\langle\xi\rangle \text{ s.t. } r'\in D\}\s B'$. Take $q''=\langle x(q), A'\setminus B', c'\rangle$ and then $(2)$ holds for $q''$.
\end{proof}

\begin{proof}[Proof of Theorem \ref{Strong Prikry Property} (The Strong Prikry Property of $\mathbb M^\ast$)]
If there is $p^\ast\le^\ast p$ with $p^\ast\in D$, we are done.
Without loss of generality, assume $p=\langle A,c\rangle$.
    
By Lemma~\ref{Large Dichotomy for SPP} there is
$p_0\le^\ast p$ such that either 
\begin{enumerate}
    \item there is $i<o^{\mathcal {U}}(\kappa)$ with
         $B(p_0,D)\in U_{\kappa,i}$, or
    \item for all $\beta\in A^{p_0}$ there is no $q\le^\ast p_0 + \langle\beta\rangle$ such that $q\in D$.
\end{enumerate}
Let $\langle t_\alpha \mid \alpha<\kappa \rangle$ be an enumeration of $\mathbb Y$ such that if there are $X_\alpha,X_\beta\in U(\kappa)$ such that $\langle t_\alpha, X_\alpha\rangle\le_{\mathbb M_{\vec{\mathcal U}}} \langle t_\beta,X_\beta\rangle$
then $\beta<\alpha$.

We define $p_{\alpha}$ by induction on $\alpha<\kappa$:
Assume that for some $\alpha<\kappa$ for all $\gamma<\alpha$, $p_\gamma$ was defined.
Use Lemma \ref{Large Dichotomy for SPP} for $p_\alpha' \le^\ast \langle p_\gamma\mid \gamma<\alpha\rangle$ and $D$ and fix $p_\alpha\le^\ast p_\alpha'$ such that 
\begin{enumerate}
    \item either there is $i_\alpha <o^{\mathcal {U}}(\kappa)$ with $B(p_\alpha +t_\alpha,D)\in U_{\kappa,i_\alpha}$, or
    \item for all $\beta\in A^{p_\alpha}$  there is no $q\le^\ast p_\alpha +t_\alpha + \langle\beta\rangle$ such that $q\in D$.
\end{enumerate}

Let $A^0:=\triangle_{\alpha<\kappa} A^{p_\alpha}$, $c^0\le\langle c^{p_\alpha}\mid \alpha<\kappa\rangle$.
Define $q_0:= \langle A^0,c^0\rangle$.
Let $D^0:= D$. For $k<\omega$, define 
\[ D^{k+1}:=\{r\in\mathbb M^\ast\mid \exists i<o^{\mathcal {U}}(\kappa) \text{ s.t. } B(r,D^k)\in U_{\kappa,i}\}.\]
Suppose that $q^k$ has been defined for some $k<\omega$.
We define $p^{k+1}_{\alpha}$ by induction on $\alpha<\kappa$:
Assume that for some $\alpha<\kappa$,
for all $\gamma<\alpha$, $p^{k+1}_\gamma$ was defined.
Use Lemma \ref{Large Dichotomy for SPP} to get $p^{k+1}_\alpha \le^\ast \langle p_\gamma^{k+1}\mid \gamma<\alpha\rangle$ such that either 
\begin{enumerate}
    \item there is $i^{k+1}_\alpha <o^{\mathcal {U}}(\kappa)$ with $B(p^{k+1}_\alpha+t_\alpha,D^{k+1})\in U_{\kappa,i^{k+1}_\alpha}$, or
    \item for all $\beta\in A^{p^{k+1}_\alpha}$  there is no $q\le^\ast p^{k+1}_\alpha +t_\alpha + \langle\beta\rangle$ such that $q\in D$.
\end{enumerate}
Let $A^{k+1}:=\triangle_{\alpha<\kappa} A^{p^{k+1}_\alpha}$, $c^{k+1}\le\langle c^{p^{k+1}_\alpha}\mid \alpha<\kappa\rangle$.

Finally let $A^\ast=\bigcap A^n$ and $c^\ast\le\langle c^n\mid n<\omega\rangle$.
Notice that if $q\le \langle A^\ast,c^\ast\rangle$ is such that $q\in D$ then there is a $\kappa$-fat tree $T$ with $h(T)=\ell(q)$ such that for all maximal $t\in T$  $p^\ast+t  \in D$.
\end{proof}

\begin{lemma}
    $\mathbb M^\ast$ preserves $\kappa^+$. 
\end{lemma}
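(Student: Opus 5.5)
We argue by contradiction, using the Strong Prikry Property (Theorem \ref{Strong Prikry Property}) together with the $\kappa^+$-closure of the direct extension order on the club component. Suppose some $p\in\mathbb M^\ast$ and a name $\dot f$ satisfy $p\Vdash \dot f\colon\check\kappa\ra\check\kappa^+$ with $\dot f$ cofinal. It is enough to find $p^\ast\le^\ast p$ and a bound $\delta<\kappa^+$ with $p^\ast\Vdash \operatorname{ran}(\dot f)\s\check\delta$. (Since $\kappa$ may become singular, the map really to bound is one with domain $\cf(\kappa)$ in the extension, or any $\mu<\kappa^+$; the same argument handles domain $\mu$ for any $\mu<\kappa^+$ in $V$.)

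For each $\xi<\kappa$ let $D_\xi$ be the open dense set of conditions deciding $\dot f(\check\xi)$. We build a $\le^\ast$-decreasing sequence $\langle p_\xi\mid\xi<\kappa\rangle$ below $p$. At stage $\xi$ we apply Theorem \ref{Strong Prikry Property} to $p_\xi$ and $D_\xi$, obtaining $p_{\xi+1}\le^\ast p_\xi$ and fat trees $T^\xi_0,\dots,T^\xi_{\ell(p)}$ such that every extension of $p_{\xi+1}$ by a maximal branch lies in $D_\xi$. At limit stages we take a $\le^\ast$-lower bound. This needs two facts. First, the $\mathbb M$-part has $\kappa$-complete measures, so we intersect the sets, using diagonal intersections over $\kappa$ for the top measure exactly as in Lemma \ref{Large Dichotomy for SPP}. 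Second, the club parts must admit a lower bound. For this we project to $\mathbb X_{x(p)}$, which has the dense $\kappa^+$-closed subset $\mathbb D_{x(p)}$, so we always work with club components lying in $\mathbb D_{x(p)}$. The construction is a copy of the fusion arguments already used above. The full sequence of length $\kappa$ then has a lower bound $p^\ast$, obtained by diagonally intersecting the measure-one sets and taking a bound in $\mathbb D_{x(p)}$.

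Now fix $\xi$. The number of maximal branches through the trees $T^\xi_k$ is at most $\kappa$. For each branch $t$, the condition $p_{\xi+1}+t$ decides $\dot f(\xi)$ as some $\gamma_{\xi,t}<\kappa^+$. Hence $\delta_\xi:=\sup_t\gamma_{\xi,t}+1<\kappa^+$. Any generic containing $p^\ast$ contains some $p^\ast+t$ with $t$ a branch of the trees (intersected with $A^{p^\ast}$), so $p^\ast\Vdash\dot f(\xi)<\delta_\xi$. Setting $\delta:=\sup_\xi\delta_\xi<\kappa^+$ contradicts the assumption that $\dot f$ is cofinal. Alternatively, we can apply the $\kappa^+$-c.c.\ of $\mathbb M$ to the decided values, reasoning as in the density proof for $\mathbb D_x$.

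\textbf{Main obstacle.} The delicate point is the limit stages of the fusion. We must take lower bounds of $\le^\ast$-sequences of length up to $\kappa$ in $\mathbb M^\ast$, and we must check that the club component obtained from $\mathbb D_x$ together with the diagonal intersection $A^\ast$ still yields a condition: $\langle A^\ast\cup\mathbf a(x)\rangle$ has to ancestrally force the bound below every $c^{p_\xi}$. We will handle this exactly as in the proof that $\mathbb D_x$ is $\kappa^+$-closed, shrinking to $B\setminus\xi$ at coordinate $\xi$. A second subtlety is that the Strong Prikry Property yields branches only through sets in $U_{\kappa,i}$. We must therefore verify that every generic meets some branch extension below $p^\ast$, i.e. that the set of such extensions is predense below $p^\ast$. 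This follows because any $q\le p^\ast$ can be further extended so that its new points run through the fat trees.
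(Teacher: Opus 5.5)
\textbf{The gap is the length-$\kappa$ fusion.} You correctly flag it as the main obstacle, but the fix you propose does not work.

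A diagonal intersection $A^\ast=\triangle_{\xi<\kappa}A^{p_\xi}$ is not contained in $A^{p_{\xi+1}}$; only $A^\ast\setminus(\xi+1)$ is. So $p^\ast$ is not a $\le^\ast$-extension of $p_{\xi+1}$. The trees $T^\xi$ that the Strong Prikry Property gives you at stage $\xi$ are built inside $A^{p_{\xi+1}}$, and nothing keeps their branches above $\xi$. Consider a generic $G\ni p^\ast$ whose Magidor club contains a point of $(A^\ast\cap\xi)\setminus A^{p_{\xi+1}}$. Such a $G$ contains neither $p_{\xi+1}$ nor any $p_{\xi+1}+t$. Hence neither $p^\ast\Vdash\dot f(\check\xi)<\check\delta_\xi$ nor your predensity claim (``any $q\le p^\ast$ can be extended so that its new points run through the fat trees'') follows.

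In the proof of Lemma~\ref{Large Dichotomy for SPP} and of the Strong Prikry Property, the diagonal intersection works only because each stage is tied to a stem. Your stage $\xi$ is tied to the value $\dot f(\xi)$, which has nothing to do with the stems. The shrinking to $B\setminus\xi$ from the $\mathbb D_x$-closure lemma repairs only the club coordinate, not this. For the same reason, your remark that ``the same argument handles domain $\mu$ for any $\mu<\kappa^+$'' is false for $\mu\ge\kappa$. Sequences of stemless conditions of length $\kappa$ need not have $\le^\ast$-lower bounds at all; for example, take $A^{p_\xi}=A\setminus\xi$.

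\textbf{The repair is the reduction you mention only in passing, done correctly, and it is what the paper does.} Since $\kappa$ is singular in $V[G]$, a collapse of $\kappa^+$ would make $\cf^{V[G]}(\kappa^+)$ some regular $\mu<\kappa$. Note that this means every $\mu<\kappa$, not just $\cf(\kappa)$. So it suffices to bound names $\dot f\colon\check\mu\to\check\kappa^+$ with $\mu<\kappa$. Then the fusion has length $\mu<\kappa$, and lower bounds come from two sources:
\begin{itemize}
\item plain intersections of the measure-one sets, by $\kappa$-completeness;
\item bounds for the club components in $\mathbb D_x$, whose maxima are decided $V$-ordinals, so their supremum has $V$-cofinality $\cf^V(\mu)<\kappa$.
\end{itemize}
With this, your branch count goes through: there are at most $\kappa$ maximal branches, so $\delta_\xi<\kappa^+$.

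The paper's version avoids counting branches altogether. It uses $D_\xi=\{q\mid q\Vdash\dot f(\check\xi)<\max(\dot c^q)\}$. Since $p_\xi+t$ has the same club component as $p_\xi$, $p_\xi$ itself forces $\dot f(\xi)<\max(\dot c^{p_\xi})$. A final $p^\ast$ with $\langle A^{p^\ast}\rangle\Vdash\max(\dot c^{p^\ast})=\check\delta$ then bounds every value at once.
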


\begin{proof}
    Let $p\in\mathbb M^\ast$ and $\mu<\kappa$ and let $\dot f$ a name such that $p\Vdash \dot f\colon \check \mu\ra \check{\kappa^+}$. Assume without loss of generality that $p=\langle A, c\rangle$.
    For $\xi<\mu$, define \[D_\xi:=\{q\in \mathbb M^\ast\mid q\Vdash \dot f(\check\xi)<\max (\dot c^q)\}.\] 
    Clearly $D_\xi$ is dense in $\mathbb M^\ast$.
    Now use the Strong Prikry Property (Theorem \ref{Strong Prikry Property}), and obtain $p_0\le^\ast p$ and and a $\kappa$-fat tree $T_0$ such that for all maximal $t\in T_0$ $p_0+t\in D_0$. 
    Thus $p_0\Vdash \dot f(0)<\max (\dot c^{p_0})$.

    Suppose that, for some $\xi<\mu$, the conditions $p_\zeta$ have been defined for all $\zeta<\xi$ and $\langle p_\zeta\mid \zeta<\xi\rangle$ is a decreasing sequence. 
    Let $q_\xi\le^\ast \langle p_\zeta\mid \zeta<\xi\rangle$. Now apply the Strong Prikry Property for $D_\xi$ and $q_\xi$ to obtain $p_\xi$ and $T_\xi$.
    Notice that $p_\xi\Vdash \dot f(\xi)<\max(\dot c^{p_\xi})$.
    Finally, let $p^\ast \le^\ast \langle p_\xi \mid \xi<\mu\rangle$ such that there is some $\delta<\kappa^+$ with $\langle A^{p^\ast}\rangle\Vdash \max(\dot c^{p^\ast})=\check \delta$.
    Thus $p^\ast\Vdash\dot f\colon\check\mu\ra\check\delta$.
\end{proof}

\begin{lemma}\label{bounded new subsets below kappa are from M}
    Let $p\in\mathbb M^\ast$, $\gamma\in \dom(p)$ with $o^{\mathcal {U}}(\gamma)>0$, and $\dot x$ such that $p\Vdash \dot x\s \check\gamma$. 
    Then there is $p'\le^\ast p$ with $p'\restriction_\gamma=p\restriction_\gamma$ and there is an $\mathbb M\downarrow \gamma$ name $\tilde y$ such that $p'\Vdash \dot x=\tilde y$.
\end{lemma}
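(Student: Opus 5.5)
We will prove the lemma by the standard factorization argument for Magidor-type forcings, adapted to the ancestral component. Write $p=p\restriction_\gamma{}^\frown p\restriction^\gamma$, where $p\restriction_\gamma$ is the part of $p^0$ below $\gamma$ (including the pair $(\gamma,A^p_\gamma)$). Below $p$, the Magidor part factors as $\mathbb M\downarrow\gamma\times\mathbb M^{>\gamma}$. The club component $\dot c^p$ and the order on it are decided by conditions of the form $\langle A\cup\mathbf a(x)\rangle$, so the upper part together with $\dot c^p$ forms a forcing $\mathbb M^\ast_{>\gamma}$. This forcing has its own direct extension order, and that order is $\gamma^+$-closed above $\gamma$: all measures involved are on cardinals $>\gamma$, and the terms $\dot c$ can be amalgamated below $\kappa^+$ as in the proof that $\mathbb D_x$ is $\kappa^+$-closed. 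We will keep the lower part fixed throughout, which is exactly what gives $p'\restriction_\gamma=p\restriction_\gamma$.

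\textbf{Key steps.} First, enumerate the pairs $\langle s,\xi\rangle$ with $s$ a lower stem extending $p\restriction_\gamma$ and $\xi<\gamma$. By $\gch$ and the fact that $\gamma$ is measurable ($o^{\mathcal U}(\gamma)>0$), there are only $\gamma$ many such pairs.

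Second, by induction along this enumeration, build a $\le^\ast$-decreasing sequence of upper parts $\langle u_\alpha\mid\alpha<\gamma\rangle$, using closure of the direct extension order at limits. At stage $\alpha$, handling $\langle s,\xi\rangle$, apply the Strong Prikry Property (Theorem \ref{Strong Prikry Property}) to the dense open set of conditions deciding ``$\check\xi\in\dot x$'', working above the condition $s^\frown u$. Only the upper trees $T_k$ for $\alpha_k>\gamma$, together with the $\kappa$-fat tree, should be shrunk into $u_{\alpha+1}$. The goal is that $s^\frown u_{\alpha+1}$ decides the statement outright whenever some direct extension of $s^\frown u_\alpha$ does.

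The point of this step is the following. Extensions in the upper part never change the lower part, so any lower condition $s$ and any upper direct extension can be combined. A density argument then shows that the set of lower $s$ for which $s^\frown u^\ast$ decides ``$\check\xi\in\dot x$'' is dense in $\mathbb M\downarrow\gamma$ below $p\restriction_\gamma$. Here $u^\ast$ is a lower bound of the sequence $\langle u_\alpha\rangle$, and the density uses the Prikry property applied to the upper parts.

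Third, let $p'=p\restriction_\gamma{}^\frown u^\ast$ and define the $\mathbb M\downarrow\gamma$-name
\[
\tilde y=\{\langle\check\xi,s\rangle\mid s^\frown u^\ast\Vdash\check\xi\in\dot x\}.
\]
Then $p'\Vdash\dot x=\tilde y$ follows by density.

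\textbf{Main obstacle.} The hard step is the second one: making the decision depend only on the lower part. The Strong Prikry Property yields decisions only after adding points from the upper fat trees, and different branches may decide differently. To handle this, I would first try the usual trick of partitioning the fat trees by the decided truth value and taking the measure-one homogeneous part. This works because the measures $U_{\alpha,i}$ with $\alpha>\gamma$ are $\gamma^+$-complete, and the statement has only $2$ possible values.

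A further difficulty is that the ancestral requirement ($p_\emptyset\Vdash\dots$) forces the upper $\dot c$'s to be compatible over all lower stems simultaneously. I would handle this by taking unions of the $\dot c$-terms as in the closure lemma for $\mathbb D_x$.
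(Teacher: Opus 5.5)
There is a genuine gap in your second step, and that step carries the whole content of the lemma. Deciding $\check\xi\in\dot x$ may require shrinking measure-one sets of the \emph{lower} part, in particular the set $A_\gamma$ attached to $\gamma$, and not only the upper part. For instance, the truth value may depend on whether the generic club below $\gamma$ meets a fixed unbounded subset of $A^p_\gamma$ that is null for all measures on $\gamma$. Then for a stem $s$ (with $A_\gamma$ unchanged) whose new points avoid that set, no condition $s^\frown u$ decides it, whereas a direct extension shrinking $A_\gamma$ does.

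This has three consequences. First, your goal ``$s^\frown u_{\alpha+1}$ decides outright whenever some direct extension of $s^\frown u_\alpha$ does'' cannot be met for the objects you enumerate. Second, your homogenization treats only the upper trees, while the trees of points below $\gamma$ are the problematic ones. Third, your final density claim is unsupported: a Prikry property for the upper part alone cannot produce lower conditions $s'$ with $s'^\frown u^\ast$ deciding. Conversely, if your $s$ ranges over full lower conditions including $(\gamma,A_\gamma)$, there are $2^\gamma=\gamma^+$ of them, not $\gamma$. Then your count, and your reliance on mere $\gamma^+$-closure, is wrong.

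The missing idea is the bookkeeping in the paper's proof:
\begin{enumerate}
\item Enumerate only pairs $(\xi_i,r_i)$, where $r_i\in[\gamma]^{<\omega}$ is a finite set of new lower points; there are $\gamma$ many.
\item Apply the ordinary Prikry Property to get $q_i\le^\ast p_j+r_i$ deciding $\check\xi_i\in\dot x$.
\item \emph{Record} the lower part $s_i$ of $q_i$, with whatever shrunken lower sets it carries.
\item Pass only the upper part and $\dot c^{q_i}$ on to $p_i$, resetting the lower part to that of $p$.
\end{enumerate}
At the end, any $q\le p_\gamma$ whose new lower points are $r_i$ is compatible with $p_\gamma+s_i$, and $p_\gamma+s_i$ decides $\check\xi_i\in\dot x$ the same way $q_i$ does. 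This gives the density you want and the name $\tilde y=\{(s,\check\alpha)\mid p_\gamma+s\Vdash\check\alpha\in\dot x\}$. Your own scheme can presumably also be rescued by enumerating all $\gamma^+$ lower conditions. This works because the upper direct-extension order is closed under sequences of length $\gamma^+$, the next point of $\dom(p)$ above $\gamma$ being measurable.

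Your second worry, on the other hand, is well founded. When $p_i$ is formed from $x(p)\downarrow\gamma$ and $\dot c^{q_i}$, its ancestor is weaker than $(q_i)_\emptyset$. So $\dot c^{q_i}$ has to be mixed with $\dot c^{p_j}$ off $(q_i)_\emptyset$, as you suggest, to keep $p_i\in\mathbb M^\ast$. The paper's proof passes over this point.
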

\begin{proof}
    Let $\langle (\xi_i,r_i)\mid i<\gamma\rangle$ be an enumeration of $\gamma\times [\gamma]^{<\omega}$.

    For convenience let us define for all $x\in \mathbb Y$: $x\downarrow\gamma:=\langle (\alpha, A^x_\alpha)\mid \alpha\in \dom(x)\cap \gamma+1\rangle$ and $x\uparrow\gamma:=\langle (\alpha, A^x_\alpha)\mid \alpha\in \dom(x)\setminus\gamma+1\rangle$.
    
    We build $s_i$ and $p_i$ inductively on $i<\gamma$:\\
    $\br$ Base case: for $i=0$, from the Prikry Property for $\mathbb M^\ast$ there is some $q\le p+r_{0}$ such that $q\| \check\xi_0\in \dot x$.
    Let $s_0:=x(q^0)\downarrow\gamma$, and let $p_0:=\langle x(p)\downarrow\gamma, x(q^0)\uparrow\gamma, A^q, c^q\rangle$.\\
    $\br$ Successor step: Suppose that $i=j+1$ and that $s_j$ and $p_j$ were defined. 
    Use the Prikry Property for $\mathbb M^\ast$ to get $q_i\le^\ast p_j+ r_i$ such that $q_i\|\check\xi_i\in \dot x$.
    Let $s_i:= x(q_i^0)\downarrow\gamma$ and let 
    $p_i:= \langle x(p)\downarrow\gamma, x(q_i^0)\uparrow\gamma, A^{q_i},c^{q_i}\rangle$.\\
    $\br$ Limit step: Let $i$ be a limit ordinal, and suppose that for all $j<i$ $s_j$ and $p_j$ were defined.
    
    Let $p'_i\le^\ast \langle p_j\mid j<i\rangle$. 
    Use the Prikry Property to get $q_i\le^\ast p'_i +r_i$ such that $q_i\|\check\xi_i\in \dot x$.
    Let $s_i:= x(q_i^0)\downarrow\gamma$ and let 
    $p_i:= \langle x(p)\downarrow\gamma, x(q_i^0)\uparrow\gamma, A^{q_i},c^{q_i}\rangle$.

    Now we can take $p_\gamma\le^\ast \langle p_j\mid j<\gamma\rangle$.
    Let 
    $\tilde y:=\{(s,\check\alpha)\mid s\in\mathbb M\downarrow \gamma/(x(p)\downarrow\gamma)\wedge p_\gamma+s \Vdash \check\alpha\in\dot x\}$.
    
    \begin{claim}
        $p_\gamma \Vdash \dot x=\tilde y$.
    \end{claim}
    \begin{proof}
        Let $\alpha<\gamma$. 
        Let $q\le p_\gamma$ such that $q\|\check \alpha \in \dot x$, then there is some $i<\gamma$ such that $(\dom(q^0)\cap\gamma,\alpha)=(r_i,\xi_i)$. 
        Notice that since $p_\gamma+ s_i$ is compatible with $q$, $q\Vdash\check \alpha \in \dot x$ iff $p_\gamma+ s_i\Vdash\check\alpha\in\dot x$ iff $s_i\Vdash_{\mathbb M\downarrow \gamma} \check\alpha \in \tilde y$.

        Therefore $p_\gamma\Vdash \dot x=\tilde y$.
    \end{proof}
\end{proof}

\begin{corollary}
    $\mathbb M^\ast$ preserves all cardinals.
    Moreover, for all $\gamma<\kappa$ which are not limit points of the generic Magidor club, all stationary subsets of $\gamma$ are preserved by $\mathbb M^\ast$. \qed
\end{corollary}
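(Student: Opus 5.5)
We need to show that $\mathbb M^\ast$ preserves all cardinals, and that for every $\gamma<\kappa$ which is not a limit point of the generic Magidor club, every stationary subset of $\gamma$ remains stationary. The plan is to handle three ranges of cardinals separately ($\lambda>\kappa^+$, $\lambda=\kappa^+$ and $\kappa$, and $\lambda<\kappa$), and then prove the stationarity clause by a factoring argument based on Lemma \ref{bounded new subsets below kappa are from M}.

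\textbf{Cardinals $\ge\kappa$.} Under $\gch$ the set $\mathbb Y$ has size $\kappa$. Two conditions with the same stem $x$ are compatible once their measure-one sets are intersected and their club names are amalgamated through the projection to $\mathbb X_x$ on a dense set. I do not expect this to give a literal $\kappa^+$-c.c.; instead I will count. The conditions have size at most $|\mathbb T(\dot{\mathbb C})|\cdot 2^\kappa=\kappa^{++}$, so $\mathbb M^\ast$ has the $\kappa^{+3}$-c.c., and hence all cardinals $\ge\kappa^{+3}$ are preserved.

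To preserve $\kappa^{++}$, I will argue as in the preservation of $\kappa^+$ already proved. Suppose $p\Vdash\dot f\colon\check\kappa^+\to\check\kappa^{++}$. Using the strong Prikry property together with the $\kappa^+$-closure of each $\mathbb D_x$, I build a $\le^\ast$-decreasing sequence that fixes, for each $\xi<\kappa^+$, a set of at most $\kappa$ possible values for $\dot f(\xi)$; each value is determined by a maximal branch of a fat tree, and there are at most $\kappa$ such branches. One difficulty is that $\le^\ast$ is only $\kappa$-closed on the measure coordinate. Rather than run the construction in $\mathbb M^\ast$ directly, I will observe that the stems and measure-one sets range over a set of size $\kappa$. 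The sequence of length $\kappa^+$ can therefore be run inside $\mathbb D_x$, and the measure-one sets stabilize by pigeonhole. The resulting range is bounded, so the map is not onto.

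Preservation of $\kappa^+$ is the lemma already proved. For $\kappa$ itself: it is a limit of cardinals that are preserved, so it remains a cardinal.

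\textbf{Cardinals $\lambda<\kappa$.} Fix $p$ forcing that $\dot g\colon\check\mu\to\check\lambda$ is onto with $\mu<\lambda$. Extend $p$ so that some $\gamma\in\dom(p)$ with $o^{\mathcal U}(\gamma)>0$ lies above $\lambda$. Such a $\gamma$ can be added via $p+\langle\gamma\rangle$ for $\gamma$ in a positive measure-one set, since $o(\kappa)\ge1$. The function $\dot g$ is coded by a subset of $\gamma$, so Lemma \ref{bounded new subsets below kappa are from M} replaces it by an $\mathbb M\downarrow\gamma$-name. Thus $\dot g$ lies in a Magidor extension below $\gamma$, and Magidor forcing preserves all cardinals. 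This is a contradiction.

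\textbf{Stationary subsets of $\gamma$.} Let $\gamma$ not be a limit point of the generic club, and let $S\subseteq\gamma$ be stationary in $V$. Take $\gamma'$ to be the least point of the club at or above $\gamma$ (or $\kappa$ if there is none). If $\gamma'>\gamma$, then $\gamma$ lies strictly between consecutive club points. Working below a condition with $\gamma'\in\dom(p)$, Lemma \ref{bounded new subsets below kappa are from M} applied to a name for a club in $\gamma$ yields a ground-model Magidor name below $\gamma'$. The Magidor forcing below $\gamma'$ factors as a part below the previous club point $\eta<\gamma$, which has small size, times a part adding no subsets of $\gamma$ between $\eta$ and $\gamma'$. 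A forcing of size $<\gamma$, or one that is sufficiently closed, preserves the stationarity of $S$.

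If $\gamma=\gamma'$ is itself a successor club point, then $\gamma$ is singularized only to cofinality $\omega$ or to the order type of the club below it. Stationary subsets of $\gamma$ that concentrate on points of small cofinality are preserved by the usual Magidor argument.

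\textbf{Main obstacle.} I expect the hardest step to be the $\kappa^{++}$ argument, where I must check that the fusion needed actually fits inside the $\kappa^+$-closed $\mathbb D_x$. The case $\gamma=\gamma'$ also needs care, and there I would rely on the standard Magidor analysis.
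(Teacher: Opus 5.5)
\textbf{Gap 1: cardinals above $\kappa^+$.} Your treatment of cardinals below $\kappa$, of $\kappa^+$ and of $\kappa$ matches what the paper's one-line corollary rests on. For cardinals below $\kappa$ you pass to an $\mathbb M\downarrow\gamma$-name via Lemma~\ref{bounded new subsets below kappa are from M} and use that Magidor forcing preserves cardinals. For $\kappa^+$ you use the preceding lemma, and $\kappa$ is a limit of preserved cardinals.

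The problem above $\kappa^+$ comes from a miscount. Under $\gch$, $2^\kappa=\kappa^+$, and $\mathbb T(\dot{\mathbb C})$ has size $\kappa^+$ up to the relevant equivalence. Indeed, $\mathbb M$ has size $\kappa^+$ and the $\kappa^+$-c.c. So if $p_\emptyset\Vdash\dot c\in\dot{\mathbb C}$, then $p_\emptyset$ forces $\dot c$ to equal a nice name for a subset of some fixed $\beta<\kappa^+$, and there are only $(\kappa^+)^\kappa=\kappa^+$ such names. Hence $\mathbb M^\ast$ has size $\kappa^+$ up to equivalence of conditions; the paper's identification $\mathbb D_x\simeq\Add(\kappa^+,1)$ implicitly uses the same count. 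So $\mathbb M^\ast$ has the $\kappa^{++}$-c.c., and every cardinal $\ge\kappa^{++}$ is preserved at once.

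The substitute argument you propose for $\kappa^{++}$ does not work. It needs a lower bound for a $\le^\ast$-decreasing sequence of length $\kappa^+$, and none is available:
\begin{itemize}
\item the measure-one coordinate is only $\kappa$-complete;
\item there are $2^\kappa=\kappa^+$ measure-one sets, not $\kappa$, so nothing stabilizes by pigeonhole;
\item $\kappa^+$-closure of $\mathbb D_x$ only bounds sequences of length $\le\kappa$;
\item each application of the Strong Prikry Property shrinks $A$ and moves $\dot c$ simultaneously, so the construction cannot be confined to a single $\mathbb D_x$.
\end{itemize}

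\textbf{Gap 2: the stationarity clause.} Your case $\gamma=\gamma'$ rests on a false claim. A point of the generic club that is not a limit point of it has $o^{\mathcal U}(\gamma)=0$ and an empty measure-one set in the stem. So nothing is added between it and its predecessor $\eta$, and $\gamma$ is \emph{not} singularized at all; it stays regular. Appealing to stationary sets concentrating on small cofinalities would in any case not cover \emph{all} stationary subsets, as the statement requires.

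This case is handled exactly like $\eta<\gamma<\gamma'$. Every subset of $\gamma$ in the extension is added by the part of the forcing at and below $\eta$, which has the $\eta^+$-c.c. Its size may be $\eta^+=\gamma$, so argue with the chain condition rather than ``size $<\gamma$''. A $\gamma$-c.c.\ forcing preserves stationary subsets of a regular $\gamma$.

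Note also that Lemma~\ref{bounded new subsets below kappa are from M} requires $o^{\mathcal U}(\gamma')>0$, which fails precisely when $\gamma'$ is a successor point. Apply it instead at some larger $\gamma''\in\dom(p)$ of positive order, then carry out the standard Magidor analysis of $\mathbb M\downarrow\gamma''$. Similarly, finding such points uses $o^{\mathcal U}(\kappa)\ge2$, which holds because $\nu$ is a limit; $o(\kappa)\ge1$ alone is not enough.
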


\subsection{\texorpdfstring{Constructing $\mathbb R^\ast$}{The forcing R*}}\label{subsec: R*}
The idea is to combine the forcing $\mathbb M^\ast$ from the previous subsection with a different forcing $\mathbb R$ that makes $\kappa=\aleph_\nu$, where $\nu<\kappa$ and $\cf(\nu)=\rho$. 
Both forcings project onto a Magidor forcing.

The following definition and ideas appear in \cite{AMGC}.
\begin{definition}[Guru]\label{def_guru} For a regular cardinal $\alpha$, a sequence $\vec{t}=\langle t_i \mid i<\alpha^{+} \rangle$  is a \emph{guru for $\alpha$} iff all of the following hold:
\begin{enumerate}
    \item for every $i<\alpha^{+}$, $t_i:\alpha\rightarrow V$ is a function such that $t_i(\beta) \in \col (\beta^+, <\alpha)$ for every regular $\beta<\alpha$;
    \item for all $j<i<\alpha^{+}$, the set $\{ \beta<\alpha \mid t_j(\beta) \nsubseteq t_i(\beta)\}$ is non-stationary in $\alpha$;
    \item for every function $D: \alpha \to V$ such that $D(\beta)$ is a dense subset of $\col(\beta^+,<\alpha)$ for every $\beta<\alpha$, 
   		 there is an $i<\alpha^{+}$ such that $t_i(\beta) \in D(\beta)$ for all $\beta<\alpha$.
\end{enumerate}
\end{definition}

The following lemmas regarding a guru are derived from \cite[Lemma 3.3]{AMGC} the difference between the gurus used here and those used there is that a guru at $\alpha$ in \cite{AMGC} is a sequence of $\alpha^{++}$ functions such that each is a function with domain $\alpha$ and maps $\beta<\alpha$ to an element of $\col(\beta^{++},<\alpha)$, whereas here a guru at $\alpha$ has length $\alpha^{+}$ and each function maps $\beta<\alpha$ to an element of $\col(\beta^+,<\alpha)$. 
Notice that there is no essential difference in the proofs.
\begin{lemma}
\label{guru_lemma}
Suppose that $\alpha$ is a regular limit cardinal such that $2^{\alpha}=\alpha^{+}$. Then there is a guru for $\alpha$.\qed
\end{lemma}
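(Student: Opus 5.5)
The plan is a diagonalization of length $\alpha^+$, the standard construction of a guru as in \cite{AMGC}. First I count the objects to be handled. For each regular $\beta<\alpha$, $\col(\beta^+,<\alpha)$ has size $\alpha^{<\beta^+}$ (with $\alpha$ inaccessible, $\alpha^{<\beta^+}=\alpha$). Hence the functions $D$ assigning to each $\beta$ a dense subset of $\col(\beta^+,<\alpha)$ number at most $(2^\alpha)^\alpha=2^\alpha=\alpha^+$. Enumerate all such $D$ as $\langle D_i\mid i<\alpha^+\rangle$. I build $t_i$ by recursion on $i<\alpha^+$ while maintaining clause (2) for all pairs constructed so far, and I arrange $t_i(\beta)\in D_i(\beta)$ for every $\beta$. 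Clause (3) then holds because each dense assignment appears as some $D_i$.

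The key tool is a closure fact. For fewer than $\alpha$ conditions $\langle t_j\mid j\in I\rangle$ that are coherent modulo the nonstationary ideal, the set of $\beta$ at which they are not pairwise $\subseteq$-coherent is nonstationary. This is a union of $<\alpha$ nonstationary sets, which is nonstationary by $\alpha$-completeness of the club filter. Off a club, and for $\beta>|I|$, the family $\{t_j(\beta)\mid j\in I\}$ is a $\subseteq$-chain in $\col(\beta^+,<\alpha)$ of length $<\beta^+$. That poset is $\beta^+$-closed, so the union is a condition. If $|I|\ge\beta$ we instead fall below using the club, or simply set the value trivially on a bounded set.

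At stage $i$, $|i|\le\alpha$, so enumerate $i=\{j_\xi\mid \xi<\alpha\}$. Define
\[s(\beta):=\bigcup_{\xi<\beta} t_{j_\xi}(\beta)\]
whenever this is a condition, i.e. when the $t_{j_\xi}(\beta)$ for $\xi<\beta$ are pairwise compatible; otherwise put $s(\beta):=\emptyset$. Then choose $t_i(\beta)\le s(\beta)$ with $t_i(\beta)\in D_i(\beta)$ (extension here means $\supseteq$).

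To check clause (2) for $j=j_\xi<i$, consider the diagonal intersection over $\eta<\alpha$ of the clubs witnessing the coherence of $t_{j_\eta}$ with $t_{j_{\eta'}}$ for $\eta,\eta'$. This gives a club $C$. For $\beta\in C$ with $\beta>\xi$, the family $\{t_{j_\eta}(\beta)\mid\eta<\beta\}$ is pairwise coherent, and it has size $<\beta^+$. So $s(\beta)$ is a genuine union containing $t_j(\beta)$, hence $t_j(\beta)\subseteq t_i(\beta)$. The failure set is therefore contained in $\xi\cup(\alpha\setminus C)$, which is nonstationary.

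The main obstacle is exactly this diagonal-intersection step. I have to organize the pairwise clubs so that a single club works for all $\eta,\eta'<\beta$ simultaneously; using $\triangle$ indexed by pairs coded below $\alpha$ handles it. I also have to make sure the union of $<\beta^+$ many pairwise-compatible collapse conditions is a condition, which uses that each $t(\beta)$ has size $\le\beta$ and that $\col(\beta^+,<\alpha)$ is $\beta^+$-closed.

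Finally, if $\alpha$ is merely regular limit rather than inaccessible, the counting must be redone using $2^\alpha=\alpha^+$. The relevant sizes are still bounded by $2^\alpha$, since each $\col(\beta^+,<\alpha)$ has size $\le 2^{<\alpha}\le2^\alpha$. So the number of $D$'s is $\le(2^{2^\alpha})$, which is too many in general. I would instead note that only the traces $D(\beta)\cap$ a fixed dense set of size $\le\alpha$ matter when $\alpha$ is inaccessible, and otherwise argue as in \cite[Lemma 3.3]{AMGC}: treat this as the place where the paper's hypothesis (GCH in $V$) is implicitly used.
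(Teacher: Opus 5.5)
Your argument is correct, and it is the standard diagonalization the paper defers to when it cites \cite[Lemma 3.3]{AMGC} without giving a proof: you use $2^\alpha=\alpha^+$ to enumerate all assignments $D$ in length $\alpha^+$, and at stage $i$ you take the diagonal union $\bigcup_{\xi<\beta}t_{j_\xi}(\beta)$ along an $\alpha$-enumeration of $i$, which is a genuine condition on a club by normality of the club filter, before extending into $D_i(\beta)$. Your count does use $\alpha^\beta=\alpha$ for all $\beta<\alpha$, i.e.\ $2^{<\alpha}=\alpha$, which holds everywhere the paper uses the lemma (it assumes GCH and needs gurus only at the strongly inaccessible $\alpha\in\dom(o^{\mathcal U})$); your closing ``traces'' remark cannot remove that assumption, because if $2^{\beta}=\alpha^+$ for some $\beta<\alpha$ then $\col(\beta^+,<\alpha)$ has antichains of size $\alpha^+$ and so has no dense subset of size $\alpha$.
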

Similarly, using this version of the notion of a guru we have the following lemma.

\begin{lemma}\label{coherentsequence}
Let $\nu$ be some limit ordinal with $\cf(\nu)=\rho$.
Let $\kappa$ be such that $o(\kappa)\geq \nu$, then there are:
    \begin{itemize}
    \item a map $o^{\mathcal {U}}$ from a set of strongly inaccessible cardinals to the ordinals,
     \item a sequence $\vec{U}=\left\langle\langle U_{\alpha,i} \mid i<o^{\mathcal {U}}(\alpha)\rangle\Mid \alpha \in \dom(o^{\mathcal {U}})\right\rangle$, and
    \item a sequence $\vec{\mathbf{t}}=\langle \vec{t}_\alpha \mid \alpha \in \dom(o^{\mathcal {U}}) \rangle$ 
    \end{itemize}
such that all of the following hold:
\begin{enumerate}[label=\textup{(\arabic*)}]
    \item  $\max(\dom(o^{\mathcal {U}}))=\kappa$ with $o^{\mathcal {U}}(\kappa)=\nu$;
    \item for every $\alpha \in \dom(o^{\mathcal {U}})$, $\vec{t}_\alpha$ is a guru for $\alpha$;
    \item for every $\alpha \in \dom(o^{\mathcal {U}})$ and every $i<o^{\mathcal {U}}(\alpha)$:
    \begin{enumerate}[label=\textup{(\alph*)}]
    \item $U_{\alpha,i}$ is a normal measure on $\alpha$,
    and we let $j_{\alpha,i}: V \to M_{\alpha,i}$ denote the corresponding ultrapower embedding;
    \item $j_{\alpha,i}(\vec{U}\restriction \alpha) \restriction \alpha=\vec{U} \restriction \alpha$;
    \item $j_{\alpha,i}(\vec{U}\restriction \alpha) (\alpha)=\langle U_{\alpha,k}\mid k<i\rangle$;
    \item $j_{\alpha,i}(\vec{\mathbf{t}} \restriction \alpha)(\alpha)=\vec{t}_\alpha$.
\end{enumerate}
\end{enumerate} 
\qed
\end{lemma}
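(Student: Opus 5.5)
We build the coherent sequence and the gurus together, by the standard Mitchell-style construction relative to a fixed guru function chosen in the ground model. First, fix by $\gch$ and Lemma~\ref{guru_lemma} a function $\alpha\mapsto\vec t_\alpha$ assigning a guru to every regular limit (in particular every inaccessible) $\alpha\le\kappa$; call it $\vec{\mathbf t}^0$. Since $o(\kappa)\ge\nu$, fix a Mitchell-increasing sequence $\langle W_i\mid i<\nu\rangle$ of normal measures on $\kappa$ with $W_k\triangleleft W_i$ for $k<i$. The goal is to pass to a coherent sequence in the usual way: define $\vec U$ by recursion on $\alpha\le\kappa$, putting $U_{\alpha,i}$ into the sequence and $o^{\mathcal U}(\alpha)$ as the maximal length of a coherent sequence at $\alpha$ extending $\vec U\restriction\alpha$, and setting $\vec t_\alpha:=\vec{\mathbf t}^0(\alpha)$. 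Clauses (1), (2), (3)(a) are then immediate from the construction, and (3)(b),(c) are the standard coherence clauses (as in Mitchell's construction of a coherent sequence from $o(\kappa)\ge\nu$, cf.\ \cite{GitikHandbook}).

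The key step is (3)(d): we need $j_{\alpha,i}(\vec{\mathbf t}\restriction\alpha)(\alpha)=\vec t_\alpha$. The plan is to arrange this by choosing the measures to \emph{respect} the guru function. Concretely, the ground-model guru function is definable (e.g.\ the $\triangleleft_{L[\cdot]}$-least guru under a fixed well-order of $V_{\kappa+1}$, or more simply: fix a well-order $\lhd$ of $H(\kappa^{++})$ and let $\vec{\mathbf t}^0(\alpha)$ be the $\lhd\restriction H(\alpha^{++})$-least guru for $\alpha$, with the well-order chosen coherently via $\gch$ so that $j(\lhd)\restriction H(\alpha^{++})=\lhd\restriction H(\alpha^{++})$ is arranged). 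Then in $M_{\alpha,i}$, $j_{\alpha,i}(\vec{\mathbf t}^0)(\alpha)$ is the $j(\lhd)$-least guru for $\alpha$ in $M_{\alpha,i}$. Since $M_{\alpha,i}$ is closed under $\alpha$-sequences and $2^\alpha=\alpha^+$, $H(\alpha^{++})^{M_{\alpha,i}}$ and $H(\alpha^{++})$ agree on $\alpha$-sequences of length $\alpha^+$ enough that being a guru for $\alpha$ (which quantifies over functions $D\colon\alpha\to V$ of dense sets, all lying in $H(\alpha^+)$, and over subsets of $\alpha$ for stationarity) is absolute between $V$ and $M_{\alpha,i}$, hence the least guru is the same.

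Alternatively, if coherence of the well-order is awkward, I would instead shrink: for each $i<o^{\mathcal U}(\alpha)$ the set $X_{\alpha,i}=\{\beta<\alpha\mid \vec{\mathbf t}^0(\beta)=[\cdots]\}$ of points where the guru function reflects the chosen guru is measure one for the appropriate measure, and define $o^{\mathcal U}$ only on points where this reflection holds, recursively (exactly as coherence is obtained for $\vec U$ itself by restricting the domain of $o^{\mathcal U}$). The main obstacle is precisely this absoluteness/reflection for (d): one must check that $\vec t_\alpha\in M_{\alpha,i}$ (true, as it is an $\alpha^+$-sequence of elements of $H(\alpha^+)$ and $M_{\alpha,i}$ contains $H(\alpha^{++})$ since $2^\alpha=\alpha^+$) and that it is a guru there, after which (d) follows by taking the guru function to be defined canonically; the remaining clauses follow by the usual argument, which we do not repeat.
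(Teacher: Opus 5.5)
There is a genuine gap at clause (3)(d), which is the only non-standard part of the lemma; the paper itself gives no proof and defers to \cite{AMGC}.

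\textbf{The false step.} Your justification rests on the claim that $M_{\alpha,i}\supseteq H(\alpha^{++})$ because $2^\alpha=\alpha^+$. This is false. $U_{\alpha,i}$ has size $\alpha^+$, so $U_{\alpha,i}\in H(\alpha^{++})$, yet $U_{\alpha,i}\notin M_{\alpha,i}$. More generally, every subset of $\alpha^+$ in $M_{\alpha,i}$ lies in $V^{M_{\alpha,i}}_{j_{\alpha,i}(\alpha)}=\{[f]\mid f\colon\alpha\to V_\alpha\}$. That set has size $\alpha^+$ in $V$, while there are $\alpha^{++}$ subsets of $\alpha^+$.

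\textbf{Consequences for your two routes.} A guru chosen in $V$ by an external recipe, such as the $\lhd$-least one, need not belong to $M_{\alpha,i}$ at all. If it does not, then $j_{\alpha,i}(\vec{\mathbf t}\restriction\alpha)(\alpha)\neq\vec t_\alpha$ is unavoidable. For the same reason, the requirement $j(\lhd)\restriction H(\alpha^{++})=\lhd\restriction H(\alpha^{++})$ does not make sense, since $H(\alpha^{++})^{M_{\alpha,i}}\subsetneq H(\alpha^{++})$. Nothing in $\gch$ yields a well-order coherent with all these ultrapowers.

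Your absoluteness observation is correct, but it only applies to sequences already in $M_{\alpha,i}$. The ``shrinking'' fallback does not help either. If $\{\beta<\alpha\mid\vec{\mathbf t}^0(\beta)=g(\beta)\}\notin U_{\alpha,i}$, removing points from $\dom(o^{\mathcal U})$ destroys the measure rather than producing coherence. You would also need one $\vec t_\alpha$ that works for all $i<o^{\mathcal U}(\alpha)$ simultaneously, and neither version addresses this.

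\textbf{The missing idea.} Do not fix the gurus in advance; read them off from the (standard) coherent sequence $\vec U$, by recursion on $\alpha\in\dom(o^{\mathcal U})$.

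If $o^{\mathcal U}(\alpha)=0$, take any guru (Lemma~\ref{guru_lemma}).

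If $o^{\mathcal U}(\alpha)>0$, set $\vec t_\alpha:=j_{\alpha,0}(\vec{\mathbf t}\restriction\alpha)(\alpha)$, which is defined by (3)(c) with $i=0$. By elementarity this is a guru in $M_{\alpha,0}$. It is then a guru in $V$ by your absoluteness argument, since only $\alpha^+$, $\mathcal P(\alpha)$ and functions $D\in H(\alpha^+)$ are involved, and ${}^{\alpha}M_{\alpha,0}\s M_{\alpha,0}$.

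\textbf{Independence from $i$.} Fix $0<i<o^{\mathcal U}(\alpha)$ and $k<i$. Apply $j_{\alpha,i}$ to the induction hypothesis ``(3)(d) holds at every $\beta\in\dom(o^{\mathcal U})\cap\alpha$'' and evaluate at $\beta=\alpha$. Here $j_{\alpha,i}(\vec U\restriction\alpha)(\alpha)=\langle U_{\alpha,k}\mid k<i\rangle$ and $j_{\alpha,i}(\vec{\mathbf t}\restriction\alpha)\restriction\alpha=\vec{\mathbf t}\restriction\alpha$. This gives $j^{M_{\alpha,i}}_{U_{\alpha,k}}(\vec{\mathbf t}\restriction\alpha)(\alpha)=j_{\alpha,i}(\vec{\mathbf t}\restriction\alpha)(\alpha)$.

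Now $\vec{\mathbf t}\restriction\alpha\in V_{\alpha+1}$, and $\mathrm{Ult}(M_{\alpha,i},U_{\alpha,k})$ and $M_{\alpha,k}$ compute $[f]$ identically for $f\colon\alpha\to V_\alpha$. So the left-hand side equals $j_{\alpha,k}(\vec{\mathbf t}\restriction\alpha)(\alpha)$. Taking $k=0$ gives $j_{\alpha,i}(\vec{\mathbf t}\restriction\alpha)(\alpha)=\vec t_\alpha$ for every $i<o^{\mathcal U}(\alpha)$, which is exactly (d).
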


The following forcing notion is an alteration of the forcing notion with gurus that appears in \cite{AMGC} where the key difference is the collapses and gurus.
\begin{definition}[\cite{AMGC}]
    Let the forcing notion $\mathbb R:=\mathbb{P}_{\vec{U},\vec{\mathbf{t}}}$ be defined as the set of conditions of the form:
    $p:=\langle e_0,(\alpha_0, A_0, i_0),\ldots,e_{n-1},(\alpha_{n-1}, A_{n-1},i_{n-1}), e_n,(\kappa, A_n, i_n)\rangle$, where
    \begin{enumerate}
        \item  $i_n<\kappa^+$ and $A_n\in\bigcap U(\kappa)$,
        \item For $k< n$: \\
        $\br$ if $o^{\mathcal {U}}(\alpha_k)\neq0$ then $i_k<\alpha_k^+$ and $A_k\in\bigcap U(\alpha_k)$,\\
        $\br$ otherwise $i_k=0$ and $A_k=\emptyset$.
        \item \begin{enumerate}
            \item $e_0\in\col(\nu^+,<\alpha_0)$,
            \item for all $1\le k\le n-1$ $e_k\in\col(\alpha_{k-1}^+,\alpha_k)$,
            \item $e_n\in\col(\alpha_{n-1}^+,<\kappa)$.
        \end{enumerate}
    \end{enumerate}
\end{definition}
Let $p,q\in \mathbb R$. We say that $q\le p$ iff
\begin{enumerate}
    \item $\dom(p)\s\dom(q)$.
    \item Let $\beta_0=\min(\dom(q))$, then $e_0^q\le_{\col(\omega_1,<\beta_0)} e_0^p$. 

    \item For all $\alpha\in\dom(p)$, let $\beta_\alpha=\min(\dom(q)\setminus\alpha+1)$, then $e_\alpha^q\le_{\col(\alpha^+,<\beta_\alpha)} e_\alpha^p$, 
    and $A_\alpha^q\s A_\alpha^p$ and $i_\alpha^p\le i_\alpha^q$.
    \item For all $\beta\in \dom (q)\setminus \dom (p)$, let $\alpha(\beta)\min \{\kappa\}\cup\dom(p)\setminus \beta+1$, then 
    \begin{itemize}
        \item $\beta\in A^p_{\alpha(\beta)}$,
        \item if $o^{\mathcal {U}}(\beta)>0$ then $A_\beta^q\s A^p_{\alpha(\beta)}$, 
        \item $e^q_\beta\le_{\col(\beta^+, \min(\dom(q)\setminus\beta+1))} t_{\alpha(\beta),i^p_{\alpha(\beta)}} (\beta)$.
    \end{itemize}
\end{enumerate}

\begin{remark}
By \cite{AMGC} all of the following statements hold for the forcing $\mathbb R:=\mathbb{P}_{\vec{U},\vec{\mathbf{t}}}$
    \begin{enumerate}
        \item $\mathbb R$ has the $\kappa^+$-chain condition.
        \item $\mathbb R$ has the Strong Prikry Property:
        Let $r\in \mathbb R$ with $\dom (r)=\{\alpha_0,\ldots,\alpha_{n-1}\}$ and $D\s \mathbb R$ dense open, then there exists $r'\le^\ast r$ and $T_0,\ldots, T_{n}$ trees that are $\alpha_0,\ldots,\alpha_{n-1},\kappa$-fat such that for every maximal $t_k\in T_k$ there is $B_{t_k}=\langle B_{t_k}^0,\ldots,B_{t_k}^{\ell(t_k)-1}\rangle$, where for each $l<\ell(t_k)$ $B_{t_k}^{l}$ is of $t_k(l)$ measure one, such that 
        \[ r'+ \langle t_0 ,\vec B_{t_0}\rangle +\dots +\langle t_{n} ,\vec B_{t_{n}}\rangle\in D.\]
        \item If $r\in \mathbb R$ and $\alpha\in \dom (r)$ then $\mathbb R/r\cong \mathbb R^{l,r}_\alpha\times \mathbb R^{u,r}_\alpha$ such that $\mathbb R^{l,r}_\alpha$ has the $\alpha$-chain condition and $\langle \mathbb R^{u,r}_\alpha,\le^\ast\rangle$ is $\alpha^+$-closed.
        \item There is a projection $\pi_{1}:\mathbb R\ra \mathbb M$.
        \item If $H'\s \mathbb R$ is generic then in $V[H']$:
        \begin{enumerate}
            \item If $\alpha\in \{\omega,\omega_1\}\cup \{\beta,\beta^+\mid \exists r\in\pi_1(H') ~ \beta\in\dom(rp)\}$ then $\alpha$ is a cardinal in $V[H']$.
            \item If $\alpha\notin\{\omega,\omega_1\}\cup \{\beta,\beta^+\mid \exists r\in\pi_1(H') ~ \beta\in\dom(r)\}$ and $\alpha<\kappa$, then $\alpha$ is not a cardinal in $V[H']$.
            \item $\kappa=\aleph_{\omega_1}$.
        \end{enumerate}
    \end{enumerate}
\end{remark}

\subsubsection*{\texorpdfstring{Constructing $\mathbb R^\ast$:}{R* construction}}\label{subsec: downto construction}

Let $(\vec{\mathcal{U}},\vec t)$ be a coherent measure and guru sequences with $o^{\mathcal {U}}(\kappa)=\nu$ from Lemma~\ref{coherentsequence}, where $\nu<\kappa$ is a limit ordinal and fix $\rho=\cf(\nu)$.
Let $\mathbb R=\mathbb P_{\vec{\mathcal{U}},\vec t}$ and $\mathbb M=\mathbb M_{\vec{\mathcal{U}}}$ (the Magidor forcing with the coherent sequence $\vec{\mathcal{U}}$) and $\mathbb M^\ast$ as defined previously.

Fix $\pi_{0,0}\colon\mathbb R\rightarrow\mathbb M$ and
$\pi_{1,0}\colon\mathbb M^\ast\rightarrow\mathbb M$ projections.

\begin{definition}
    The forcing $\mathbb R^\ast$ consists of conditions of the form 
    \[p=\langle p^0, \dot c\rangle,\] where
    \begin{enumerate}
        \item $p^0\in\mathbb R$.
        \item $p^1:= \langle \pi_{0,0}(p^0), \dot c\rangle\in \mathbb M^\ast$.
    \end{enumerate}
\end{definition}

\begin{definition}
    Let $q,p\in\mathbb R^\ast$. We say that $q\le p$ iff
        $q^0\le_{\mathbb R}p^0$ and $q^1\le_{\mathbb M^\ast} p^1$.
    We say that $q\le^\ast p$ iff $q\le p$ and $q^0\le^\ast_{\mathbb R}p^0$.
\end{definition}

\begin{lemma}
    There is a projection $\pi^1:\mathbb R^\ast\ra\mathbb M^\ast$ defined by $p\mapsto p^1$. 
\end{lemma}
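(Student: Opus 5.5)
We check directly that $\pi^1(p):=p^1=\langle\pi_{0,0}(p^0),\dot c^p\rangle$ meets the three clauses of a projection. It is well defined into $\mathbb M^\ast$ by clause (2) of the definition of $\mathbb R^\ast$.

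\emph{Order preservation.} This is immediate from the definition of $\le_{\mathbb R^\ast}$. If $q\le p$ then $q^1\le_{\mathbb M^\ast}p^1$ is literally one of the two requirements, so $\pi^1(q)\le\pi^1(p)$.

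\emph{Top element.} We may take the trivial condition of $\mathbb R^\ast$ to be $\langle 1_{\mathbb R},\check\emptyset\rangle$. Assuming $\pi_{0,0}(1_{\mathbb R})=1_{\mathbb M}$, this is sent to the trivial condition of $\mathbb M^\ast$. Otherwise we work below that condition, as in the projection $\pi_x$ of Subsection \ref{subsubsec: proj}.

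\emph{Density clause.} This is the substantive part. Let $p\in\mathbb R^\ast$ and $s\le_{\mathbb M^\ast}\pi^1(p)=p^1$; we must find $q\le p$ with $\pi^1(q)\le s$.

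1. Since $s^0\le_{\mathbb M}\pi_{0,0}(p^0)$ and $\pi_{0,0}$ is a projection, pick $r\le_{\mathbb R}p^0$ with $\pi_{0,0}(r)\le_{\mathbb M}s^0$.
2. Put $q:=\langle r,\dot c^s\rangle$.
3. Check that $q^1=\langle\pi_{0,0}(r),\dot c^s\rangle$ lies in $\mathbb M^\ast$. Since $\pi_{0,0}(r)\le s^0$, we have $x(\pi_{0,0}(r))$ extending $x(s)$ and $A^{\pi_{0,0}(r)}\s A^s$ up to the measure sets. As in the transitivity lemma, this gives $q^1_\emptyset\le_{\mathbb M} s_\emptyset$: the remaining $\mathbf a$-sets are subsets of those of $s$, and new points come from $A^s$ or $\mathbf a(x(s))$. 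Because $s_\emptyset\Vdash\dot c^s\in\dot{\mathbb C}$, also $q^1_\emptyset\Vdash\dot c^s\in\dot{\mathbb C}$.
4. The same monotonicity gives $q^1\le s$. We have $q^1_\emptyset\Vdash\dot c^s\trle\dot c^s$ trivially, and $q^1\le s\le p^1$, so $q^1\le p^1$ by the transitivity lemma.
5. Together with $r\le p^0$, this shows $q\le_{\mathbb R^\ast}p$ and $\pi^1(q)=q^1\le s$.

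The one point needing care is the monotonicity $q'^0\le_{\mathbb M}q^0\Rightarrow q'_\emptyset\le_{\mathbb M}q_\emptyset$ used in steps 3 and 4. This is exactly the observation $\mathbf a(x(r))\s\mathbf a(x(q))$ from the transitivity proof, together with the fact that the measure-one sets of an extension shrink. So it is routine once that fact is cited, and the main obstacle is simply confirming that the lower part of $\pi_{0,0}(r)$ does not escape the ancestral set of $s$.
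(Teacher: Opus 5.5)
Your proof is correct. The paper states this lemma without proof, and your verification is the natural one: use the projection $\pi_{0,0}$ to pull $s^0\le\pi_{0,0}(p^0)$ back to some $r\le p^0$, then take $q=\langle r,\dot c^s\rangle$.

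You also handle the one delicate point correctly. The paper's transitivity proof asserts $\mathbf a(x(r))\s\mathbf a(x(q))$, which is literally false when $r$ adds points from $A^q$. What is true, reading $\mathbf a(x)$ as a union, is the inclusion $A^{r}\cup\mathbf a(x(r))\s A^{q}\cup\mathbf a(x(q))$. That is the inclusion you actually use, and it is all that is needed for $q^1_\emptyset\le s_\emptyset$.
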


Let \[\mathbb Y':=\{ y\mid \exists A\in U(\kappa),\exists i<\kappa^+,c\in\mathbb T(\dot{\mathbb C}) ~\langle y,(A,i),c\rangle\in \mathbb R^\ast\}.\] Notice that $|\mathbb Y'|=\kappa$.
For every $y\in \mathbb Y'$
let $\mathbb Z_y:=\mathbb X_{y^1}$
and set $c'\le_{\mathbb Z_y} c$ iff there are $A\in U(\kappa)$ and $i<\kappa^+$ such that $\langle y,(A,i),c'\rangle\le \langle y,(A,i),c\rangle$.
\subsubsection{Strong Prikry Property}\label{sec:prikrypropertydown}
\begin{theorem} \label{Strong R* Prikry Property} 
Let $p\in\mathbb R^\ast$ and $D\s \mathbb R^\ast$ dense open.
    Then there exists a condition $p'\le^\ast p$ and trees $T_0,\ldots, T_{\ell(p)}$ that are $\alpha_0,\ldots,\alpha_{{\ell(p)}-1},\kappa$-fat such that for every maximal $t_k\in T_k$ there is $B_{t_k}$ of $t_k$ measure one such that 
    \[ p'+ \langle t_0 ,\vec B_{t_0}\rangle +\dots +\langle t_{\ell(p)} ,\vec B_{t_{\ell(p)}}\rangle\in D.\]
\end{theorem}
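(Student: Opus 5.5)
We will follow the proof of Theorem~\ref{Strong Prikry Property} for $\mathbb M^\ast$, now carrying the collapse parts of $\mathbb R$ along as well. The Strong Prikry Property of $\mathbb R$ from \cite{AMGC} is the model for how the collapses are handled. We first treat the top coordinate $\kappa$. The coordinates below $\max(\dom(p))$ are then handled by the product decomposition $\mathbb R/r\cong \mathbb R^{l,r}_\alpha\times\mathbb R^{u,r}_\alpha$ and an induction on $\ell(p)$, so we may assume $p=\langle e,(\kappa,A,i),\dot c\rangle$.

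The first step is an analogue of Lemma~\ref{Large Dichotomy for SPP} for $\mathbb R^\ast$. Given $q$ and $D$, we find $q'\le^\ast q$ with one of two properties. Either the set of $\beta\in A^{q'}$ is in some $U_{\kappa,i}$, where $\beta$ ranges over those for which there are a lower part $e_\beta\in\col(\nu^+,<\beta)$ (or below the previous point) and a measure-one $B_\beta$ with $q'+\langle\beta,B_\beta\rangle$, with lower collapse $e_\beta$, in $D$. Or no direct extension of any $q'+\langle\beta\rangle$ lies in $D$. The construction is by induction on $\alpha<\kappa$, as in Lemma~\ref{Large Dichotomy for SPP}. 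At step $\alpha$ we handle $\beta_\alpha$ and fix a witness $q_{\beta_\alpha}\in D$. We update the club part to $c^{q_{\beta_\alpha}}$; this is legitimate because $c$ is only ancestrally forced, which is exactly what the projection $\pi^1$ and the $\kappa^+$-closure of $\mathbb D_x$ allow. We then take lower bounds at limits via $\mathbb D_x$ and the $\kappa^+$-closure of the upper collapse part. The collapse component above $\beta$ is a condition in $\col(\beta^+,<\kappa)$. So we also need the guru: we gather the collapse requirements $\beta\mapsto e^{q_\beta}_{\text{upper}}$ into a function $D(\beta)$ of dense sets, or strengthen to them. Clause (3) of Definition~\ref{def_guru} then gives an index $i^\ast<\kappa^+$ with $t_{\kappa,i^\ast}(\beta)$ meeting every requirement. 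Raising $i$ to $i^\ast$ is a direct extension. By clause (2), $t_{\kappa,i^\ast}$ extends the earlier guru functions on a club, and the diagonal intersection of the measure-one sets absorbs this.

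The second step iterates the dichotomy, as in the proof of Theorem~\ref{Strong Prikry Property}. We enumerate $\mathbb Y'$ (of size $\kappa$) as $\langle t_\alpha\mid\alpha<\kappa\rangle$, respecting the Magidor order. We apply the dichotomy to $p_\alpha+t_\alpha$ along a $\le^\ast$-decreasing sequence of length $\kappa$. The top measure sets are fused by diagonal intersection. The club parts are fused by the closure of $\mathbb D_x$, and the lower collapse parts by $\kappa$-closure in the relevant factor, since the lower part of $p$ is fixed. This is repeated for the derived open sets $D^{k+1}=\{r\mid B(r,D^k)\in U_{\kappa,i}\text{ for some }i\}$, $k<\omega$, and we take the intersection and a lower bound. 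Given any $q\le p'$ in $D$ with $\ell(q)$ new points, downward induction through $D^{\ell(q)},\dots,D^0$ then produces a $\kappa$-fat tree of height $\ell(q)$. At each node the sets $B_{t}$ are read off from the witnesses $q_\beta$, via the normality-and-coherence argument ($A^0_i,A^1_i,A^2_i$) of Lemma~\ref{Large Dichotomy for SPP}.

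The main obstacle is the dependence of the lower collapse part $e_\beta\in\col(\cdot,<\beta)$ on $\beta$. It cannot be fixed by a direct extension of $p$, which is why the conclusion lists $B_{t_k}$ and the extension $p'+\langle t,\vec B_t\rangle$ depends on $t$. We would first try a $\kappa$-completeness argument: $|\col(\nu^+,<\beta)|<\kappa$ for small lower parts, so normality stabilises the choice on a measure-one set. Where the choice genuinely depends on $\beta$, we would instead let the guru together with the $i$-coordinate absorb it, as in \cite{AMGC}. The second point to check is compatibility between the $\mathbb R$-coordinate and the $\mathbb M^\ast$-coordinate after fusion, namely that $\pi_{0,0}$ of the fused $\mathbb R$-part still ancestrally forces the fused $\dot c$. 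This follows because the ancestral forcing condition only mentions $A\cup\mathbf a(x)$, which shrinks compatibly under diagonal intersection.
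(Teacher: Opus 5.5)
Your route re-runs the whole fusion argument for $\mathbb R^\ast$ from scratch. As written it has a genuine gap exactly where the collapses enter: you never settle how one direct extension $p'$ absorbs the collapse parts of the witnesses $q_\beta$, and neither mechanism you offer works.

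First, the guru cannot absorb the dependence of the lower part $e_\beta\in\col(\nu^+,<\beta)$ on $\beta$. The value $t_{\kappa,i}(\beta)\in\col(\beta^+,<\kappa)$ only constrains the collapse above $\beta$, whereas the collapse below $\beta$ in $p'+\langle\beta\rangle$ is inherited from $p'$ itself. That top collapse of $p'$ lives in $\col(\nu^+,<\kappa)$ (or $\col(\alpha^+,<\kappa)$ for the last point $\alpha$ of $p$), which is not $\kappa$-closed. So you cannot fuse it along your length-$\kappa$ constructions either. The statement also does not permit $t$-dependent lower collapses: the $\vec B_t$ are measure-one sets, not collapse conditions.

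Second, clause (3) of Definition~\ref{def_guru} only lets you meet dense sets. The single requirements $\beta\mapsto e^{q_\beta}_{\mathrm{upper}}$ are not dense, and nothing yields $t_{\kappa,i^\ast}(\beta)\le e^{q_\beta}_{\mathrm{upper}}$ on a measure-one set. A from-scratch argument would need the following, all relative to a club coordinate that your iteration keeps changing:
\begin{enumerate}
\item for each $\beta$, treat all of the at most $\beta$ many lower parts at once, using the $\beta^+$-closure of $\col(\beta^+,<\kappa)$;
\item this yields dense sets $D(\beta)$ of upper collapses that either work or cannot be extended to work, and only then is the guru applied;
\item finally, stabilise the lower parts by normality, measure by measure.
\end{enumerate}
Your sketch hedges over exactly this step.

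Separately, the reduction to $\ell(p)=0$ via $\mathbb R/r\cong\mathbb R^{l,r}_\alpha\times\mathbb R^{u,r}_\alpha$ does not lift to $\mathbb R^\ast$. The name $\dot c$ is forced by the stemless condition $\langle A\cup\mathbf a(x)\rangle$, so it is tied to the whole Magidor sequence, not just its part above $\alpha$.

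The paper avoids redoing any of this. It sets $D_{c'}=\{r\in\mathbb R\mid\langle r,c'\rangle\in D\}$ and $D^1=\{q\in\mathbb M^\ast\mid\exists r\,(\pi_{0,0}(r)=\pi^0(q)\wedge D_{c^q}\text{ is dense below }r)\}$, and checks that $D^1$ is dense. It then applies the Strong Prikry Property of $\mathbb M^\ast$ (Theorem~\ref{Strong Prikry Property}) to $p^1$ and $D^1$. This freezes the club coordinate as $c^\ast$, together with $A^\ast$ and a $\kappa$-fat tree $T$. With $c^\ast$ fixed, it applies the Strong Prikry Property of $\mathbb R$ from \cite{AMGC} as a black box to $D_{c^\ast}$ below $r^\ast=\pi^0(\langle e,(A^\ast,i),c^\ast\rangle)$, and concatenates the resulting tree with $T$.

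So all collapse and guru bookkeeping stays inside the known result for $\mathbb R$, and all ancestral-club bookkeeping stays inside the known result for $\mathbb M^\ast$. If you adopt this modular route, the step carrying the weight is that $D_{c^\ast}$ is dense below $r^\ast$. The paper derives this from the witnesses $r_t$ in one line, and it is the one place where their collapse parts must be checked against $r^\ast$.
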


\begin{proof} Without loss of generality, assume that $p=\langle e, (A,i), c\rangle\in\mathbb R^\ast$.
Let $c'\in\mathbb T(\dot{\mathbb C})$, define $D_{c'}:=\{r\in\mathbb R\mid \langle r,c'\rangle\in D\}$.
Let $D^1:=\{q\in\mathbb M^\ast \mid \exists r\in\mathbb R (\pi_{0,0}(r)=\pi^0(q))\wedge( D_{c^q} \text{ is dense below } r)\}$.

\begin{claim}
    $D^1$ is dense in $\mathbb M^\ast$.
\end{claim}
\begin{proof}
    Let $q\in\mathbb M^\ast$ and let $p\in \mathbb R^\ast$ such that $\pi^1(p)=q$.
    Let $p'\le p$ be in $D$. Then $q':=\pi^1(p')\le q$. Let $r':= \pi^0 (p')$. Clearly $D_{c^{q'}}$ is dense below $r'$ therefore $q'\in D^1$.
\end{proof}
Since $D^1$ is dense, in $\mathbb M^\ast$ apply the Strong Prikry Property, Theorem~\ref{Strong Prikry Property} for $p^1$ and $D^1$ and obtain $q:=\langle A^\ast, c^\ast\rangle \le^\ast_{\mathbb M^\ast} \pi^1(p)$ and a $\kappa$-fat tree $T$ such that for all maximal $t\in T$ there is $\vec B^1_t$ such that $q+ \langle t,\vec B^1_t\rangle\in D^1$ i.e, there is $r_t\in\mathbb R$ such that $\pi_{0,1}(r_t)=\pi^0(q+\langle t,\vec B^1_t\rangle)$ and $D_{c^\ast}$ is dense below $r_t$.

Let $p^\ast:=\langle e, (A^\ast,i), c^\ast\rangle$ and 
$r^\ast:= \pi^0(p^\ast)$.
Notice that since for all maximal $t\in T$, $r_t\le_{\mathbb R} r^\ast$, $D_{c^\ast}$ is dense below $r^\ast$.

From the Strong Prikry Property for $\mathbb R$, let $r^{**}\le^\ast r^\ast$ and a $\kappa$-fat tree $T^0$ such that for all maximal $s\in T^0$ there is $\vec B^0_s$ such that $r^{**}+\langle s, \vec B_s^0\rangle \in D_{c^\ast}$.

Let $p^{**}:= \langle r^{**}, c^\ast\rangle$ and let $T^\ast$ such that $T^\ast=T\cup \{t+s \mid s\in T^0, t\in T^n\}$.

Thus for every maximal $t'\in T^\ast$ there are $s\in T^0$ and $t\in T^n$ maximal branches such that $t'=t+s$ and set $\vec B_{t'}:= \vec B_t^1+\vec B_s^0$. 
Then for every maximal $t'\in T^\ast$, $p^{**}+ \langle t', \vec B_{t'}\rangle\in D$.
\end{proof}

\begin{lemma} \label{Prikry Property bounding below kappa}
    Let $p\in\mathbb R^\ast$ and $\dot \eta$ such that $p\Vdash \dot \eta<\check\kappa^+$.
    Then there is $\mu<\kappa^+$ and $p^\ast\le^\ast p$ such that $p^\ast \Vdash \dot \eta\le \check\mu$.
\end{lemma}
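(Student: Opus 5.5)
The plan is to combine the Strong Prikry Property for $\mathbb R^\ast$ (Theorem~\ref{Strong R* Prikry Property}) with the $\kappa^+$-chain condition of $\mathbb R$. We will first bound the value of $\dot\eta$ along each branch of a fat tree, and then take a supremum over fewer than $\kappa^+$ possibilities.

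Define
\[D:=\{r\in\mathbb R^\ast\mid \exists \mu<\kappa^+\ r\Vdash \dot\eta\le\check\mu\}.\]
This set is open. It is dense below $p$: given $q\le p$, take $q'\le q$ with $q'\Vdash\dot\eta=\check\beta$ for some $\beta<\kappa^+$, which is possible since $p\Vdash\dot\eta<\check\kappa^+$. Apply Theorem~\ref{Strong R* Prikry Property} to $p$ and $D$. This yields $p'\le^\ast p$, fat trees $T_0,\ldots,T_{\ell(p)}$, and for each tuple of maximal branches $\vec t=(t_0,\ldots,t_{\ell(p)})$ measure-one sets $\vec B_{\vec t}$ with $p'+\langle t_0,\vec B_{t_0}\rangle+\dots\in D$. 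For each such tuple fix a witness $\mu_{\vec t}<\kappa^+$.

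Each $T_k$ is a subset of $[\alpha_k]^{<\omega}$ or $[\kappa]^{<\omega}$, so the number of tuples $\vec t$ is at most $\kappa$. Set $\mu:=\sup_{\vec t}\mu_{\vec t}$. Since $\kappa^+$ is regular, $\mu<\kappa^+$.

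It remains to show $p'\Vdash\dot\eta\le\check\mu$. By the fatness of the trees, the set of conditions of the form $p'+\langle t_0,\vec B_{t_0}\rangle+\dots+\langle t_{\ell(p)},\vec B_{t_{\ell(p)}}\rangle$ is predense below $p'$. To see this, let $r\le p'$. Extend $r$ until its finite domain realizes a maximal branch through each $T_k$: each successor set lies in some $U_{\alpha,i}$, so we can always find the next point in $A^r$ or in the relevant $\mathbf a$-set. The resulting extension is then compatible with the corresponding $p'+\langle\vec t,\vec B\rangle$. This is the standard Magidor-style argument.

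I expect the main obstacle to lie here, in two places. The first is checking that the Magidor-style compatibility argument goes through in the presence of the collapse parts $e_k$ and the guru indices, so that extending $r$ to a branch does not clash with the $\vec B_{\vec t}$. This is handled by shrinking with the measure-one sets. The second is the $\dot c$ component: compatibility in $\mathbb R^\ast$ requires the ancestral forcing relation $q_\emptyset\Vdash \dot c^{p'}\trle\dot c^{r}$. This is automatic here, because all the conditions involved extend $p'$ and the $c$-parts are unchanged by adding points.

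If the predensity argument becomes cumbersome, I would instead run an inductive construction over an enumeration of $\mathbb Y'$ of length $\kappa$, as in the proof of the $\kappa^+$-preservation lemma for $\mathbb M^\ast$. At each stage I would use the $\le^\ast$-closure (via $\mathbb R^{u}$ and the closure of $\mathbb D_x$) to decide a bound, and then diagonally intersect.
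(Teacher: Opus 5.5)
Your proof is correct, but it gets the uniform bound by a different mechanism than the paper.

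\textbf{Your route.} You bound $\dot\eta$ separately along each tuple of maximal branches. Since there are at most $\kappa$ such tuples, $\sup_{\vec t}\mu_{\vec t}<\kappa^+$ by regularity.

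\textbf{The paper's route.} The paper never counts branches; it uses the club-shooting coordinate as a bookkeeping device.
\begin{enumerate}
\item It takes $D=\{q\mid q\Vdash\dot\eta<\max(\dot c^q)\}$. This is open because $\dot c$ only grows along $\le$, and dense because one can append an ordinal to $\dot c$.
\item It applies the Strong Prikry Property to get $p'$ and $T$.
\item It passes to one more direct extension $p''\le^\ast p'$ with $\langle A^{p''}\rangle\Vdash\max(\dot c^{p''})=\check\mu$, using the density of $\mathbb D_x$.
\end{enumerate}
Adding points does not change the $c$-part, so every $p'+\langle t,\vec B_t\rangle$ forces $\dot\eta<\max(\dot c^{p'})\le\mu$ simultaneously. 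The smallness of $\mu$ thus comes from the $\kappa^+$-c.c.\ of $\mathbb M$, which was used to show $\mathbb D_x$ is dense, rather than from a cardinality count.

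\textbf{Comparison.} Your argument is more elementary and never touches $\dot c$. It works verbatim for $\mathbb R$ itself, and it would survive even if the Strong Prikry Property required extra lower-part extensions, since there are still at most $\kappa$ of them. The paper's argument stores the bound inside the condition as $\max(\dot c^q)$. That is exactly the form invoked in the next theorem, where it uses $q_\varepsilon\Vdash\dot f(\check\xi_\varepsilon)<\max(\dot c^{q_\varepsilon})$. You can recover that form from your conclusion by appending $\mu+1$ to $\dot c^{p'}$, as in the density proof for $\mathbb D_x$.

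\textbf{Predensity.} Both proofs rest on the same predensity of the branch extensions, which the paper also asserts without proof. Your sketch of it (``we can always find the next point in $A^r$'') is too quick as stated. Suppose $\Succ_T(\emptyset)$ consists of points of Mitchell order $i$, and $r$ has already added some $\gamma\notin\Succ_T(\emptyset)$ of order $i$. Since $A\downarrow\gamma$ and $A\downarrow\beta$ contain only points of lower order, $r$ is then incompatible with every $p'+\langle\beta\rangle$ for $\beta\in\Succ_T(\emptyset)$. So you first need a further $\le^\ast$-shrinking of the measure-one sets. Concretely, intersect level by level with sets such as $\Succ_T(\emptyset)\cup\{\gamma\mid o^{\mathcal U}(\gamma)\neq i\}$, and with the sets of $\gamma$ at which the tree reflects to a $\gamma$-fat tree. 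This shrinking is harmless and does not affect your bound $\mu$.
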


\begin{proof}~
    Let $D:=\{q\in \mathbb R^\ast \mid q\Vdash \dot \eta <\max (\dot c^q)\}$. Clearly $D$ is dense below $p$.
        Without loss of generality assume $\dom(p)=\emptyset$. 
        Use the Strong Prikry Property \ref{Strong R* Prikry Property} for $p$ and $D$ to obtain $p'\le^\ast p$ and a $\kappa$-fat tree $T$ such that for all maximal $t\in T$ there is $\vec B_t=\langle B_t^0,\ldots,B_t^{\ell(t)-1}\rangle$ a vector of $t$-large sets such that $p'+\langle t, \vec B_t\rangle\in D$.

        Let $p''\le^\ast p'$ and $\mu<\kappa^+$ such that $\langle A^{p''}\rangle\Vdash \max(\dot c^{p''})=\check\mu$.

        Then for all $q\le p''$ there is $t\in T$ compatible with $q$, therefore $q+\langle t, \vec B_t\rangle\in D$.
        Hence $q+\langle t, \vec B_t\rangle\Vdash \dot \eta\le \check \mu$.
        Thus $p''\Vdash \dot \eta\le\check\mu$.
\end{proof}

\begin{lemma}\label{bounded new subsets below kappa are from R}
    Let $p\in\mathbb R^\ast$, $\gamma\in \dom(p)$ and $\dot x$ such that $p\Vdash \dot x\s \check\gamma$. 
    Then there is $p'\le^\ast p$ with $p'\downarrow\gamma=p\downarrow\gamma$ and there is an $\mathbb R\downarrow\gamma$ name $\tilde y$ such that $p'\Vdash \dot x=\tilde y$.
\end{lemma}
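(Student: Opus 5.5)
I will follow the proof of Lemma~\ref{bounded new subsets below kappa are from M}, replacing the Prikry Property of $\mathbb M^\ast$ by the Strong Prikry Property of $\mathbb R^\ast$ (Theorem~\ref{Strong R* Prikry Property}), and using the factorization $\mathbb R/r\cong \mathbb R^{l,r}_\gamma\times\mathbb R^{u,r}_\gamma$ from the Remark. The idea is to freeze the part of the condition below $\gamma$ and use the $\le^\ast$-closure of the part above $\gamma$ to decide everything. Write $p\downarrow\gamma$ for the lower part of $p^0$ (the Magidor--collapse part with domain $\subseteq\gamma+1$, including the collapse $e$ sitting just above $\gamma$ only up to $\gamma^+$ as in the factorization). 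Write $p\uparrow\gamma$ for the remaining part together with $\dot c^p$.

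The first step is to fix an enumeration $\langle (\xi_i,r_i)\mid i<\gamma^{(+)}\rangle$ of pairs consisting of an ordinal $\xi<\gamma$ and a condition $r$ in the lower forcing $\mathbb R\downarrow\gamma$ below $p\downarrow\gamma$. Since the lower part now contains collapses, there are $\gamma^+$ such conditions rather than $\gamma$. This is harmless, because the direct-extension order on the upper part is $\gamma^+$-closed: both $\langle\mathbb R^{u}_\gamma,\le^\ast\rangle$ and the $\mathbb Z$/$\mathbb X$ component, via the $\kappa^+$-closure of $\mathbb D_x$ (run as in the closure lemma for $\mathbb D_x$), have this closure.

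Next I build a $\le^\ast$-decreasing sequence of upper parts $u_i$. At stage $i$, take a lower bound $u_i'$ of $\langle u_j\mid j<i\rangle$ (ordinary closure at successors, and at limits the diagonal intersection of the measure-one sets, supremum of the $i$-coordinates below $\kappa^+$, lower bound in the collapses, and a $\mathbb D$-style amalgam of the $\dot c$'s). Then consider the condition $r_i{}^\frown u_i'$. Apply the Strong Prikry Property to the dense open set of conditions deciding $\check\xi_i\in\dot x$. Since we want to change only the upper part, the direct extension we obtain is of the form $r_i'{}^\frown u_i$ with $r_i'\le^\ast r_i$. The fat trees above $\gamma$ are absorbed by the standard argument: the truth value is constant on a measure-one set of branches, so it is already decided by a direct extension. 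If no decision occurs, we keep $u_i=u_i'$. Let $u_\gamma$ be a final lower bound and set $p':=(p\downarrow\gamma){}^\frown u_\gamma$. Then $p'\le^\ast p$ and $p'\downarrow\gamma=p\downarrow\gamma$.

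Finally, define $\tilde y:=\{(s,\check\alpha)\mid s\in\mathbb R\downarrow\gamma,\ s\le p\downarrow\gamma,\ s{}^\frown u_\gamma\Vdash\check\alpha\in\dot x\}$ and argue as in the Claim. Given $q\le p'$ deciding $\check\alpha\in\dot x$, factor $q=q_l{}^\frown q_u$. Density yields some $i$ with $r_i$ compatible with $q_l$ and $\xi_i=\alpha$, and with the decision made at stage $i$ agreeing with $q$. Hence $q\Vdash\check\alpha\in\dot x$ iff the corresponding $s$ forces $\check\alpha\in\tilde y$, so $p'\Vdash\dot x=\tilde y$.

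The main obstacle is making the "only the upper part changes" step rigorous. The Strong Prikry Property of $\mathbb R^\ast$ produces trees at the points of $\dom(p)$ below $\gamma$ and may shrink lower collapses, and the ancestral $\dot c$ depends on the lower Magidor coordinates through $\mathbf a(x)$. I would handle this by letting the enumeration range over all lower parts, so that lower-part strengthening is absorbed into the choice of $r_i$. Since $\dot c$ is evaluated via $\langle A\cup\mathbf a(x)\rangle$, the amalgamation across different lower parts must be done coordinatewise, exactly as the names $\dot b^j$ were glued in the proof that $\mathbb D_x$ is $\kappa^+$-closed.
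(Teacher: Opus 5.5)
\textbf{There is a genuine gap: the length of your construction.} Your overall scheme matches the paper's proof. You freeze the part below $\gamma$, run through pairs $(\xi_i,r_i)$, decide $\check\xi_i\in\dot x$ by a direct extension, record the resulting lower part, carry the upper part and $\dot c$ forward, and read off $\tilde y$ by a compatibility argument. The problem is that you enumerate all conditions of $\mathbb R\downarrow\gamma$ below $p\downarrow\gamma$, get $\gamma^+$ of them, and assert this is harmless because $\langle\mathbb R^{u}_\gamma,\le^\ast\rangle$ is $\gamma^+$-closed. But $\gamma^+$-closure only bounds $\le^\ast$-decreasing sequences of length $\le\gamma$. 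This is the sense used throughout the paper: $\mathbb D_x$ being $\kappa^+$-closed is proved for sequences of length $\kappa$. The collapse $\col(\gamma^+,<\beta)$ sitting immediately above $\gamma$ has conditions of size $\le\gamma$, so a decreasing sequence of length $\gamma^+$ in it generally has no lower bound. Hence the final upper part you need in order to define $p'$ does not exist in general, and the argument breaks at that step.

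Your count is also misattributed. Since $\gamma$ is inaccessible and $\gch$ holds, the collapses below $\gamma$ contribute only $\gamma$ many possibilities. The $\gamma^+$ comes from the measure-one set $A_\gamma\in\bigcap U(\gamma)$ and the guru index $i_\gamma<\gamma^+$ at $\gamma$ itself.

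\textbf{The fix, which is what the paper does.} Enumerate only $\gamma\times\mathbb Y_\gamma$, where $\mathbb Y_\gamma$ is the set of stems below $\gamma$; it has size $\gamma$. At stage $i$, directly extend $\langle r_i,(\gamma,A^p_\gamma,i^p_\gamma),u'_i,\dot c_i\rangle$ to decide $\check\xi_i\in\dot x$, always restarting from the original data at $\gamma$. Record in $s_i$ the whole resulting lower part, including the shrunk $A_\gamma$ and the raised index at $\gamma$. Carry forward only the upper part and $\dot c$, so the sequence has length $\gamma$ and the $\le^\ast$-closure suffices.

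In the final claim, match only the stem of $q$ below $\gamma$ with some $r_\varepsilon$, not its full lower condition. Then $q$ is compatible with $\langle s_\varepsilon,u,\dot c\rangle$, because the data at $\gamma$ can be amalgamated: intersect the measure-one sets and take the larger index.
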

\begin{proof} 
Let $r=p\downarrow\gamma$.
Let $\mathbb Y_\gamma=\{y\mid \exists A\in U(\gamma) \text{ s.t. } \langle y,A\rangle\in \mathbb R\downarrow \gamma\}$.
    Let $\langle (\xi_i,r_i)\mid i<\gamma\rangle$ be an enumeration of $\gamma\times (\mathbb Y_\gamma)$.

    We build $s_\alpha,p_\alpha$ and $c_\alpha$ inductively on $\alpha<\gamma$:\\
    $\br$ Base case: For $\alpha=0$, from the Prikry Property for $\mathbb R^\ast$ there is some $q\le^\ast \langle r_{0}, (\gamma,A_\gamma^p, i_\gamma^p),\allowbreak p^0\uparrow\gamma, c^p\rangle$ such that $q\| \check\xi_0\in \dot x$.
    Let $s_0:=q^0\restriction\gamma$, $A_{\gamma,0}:=A^q_\gamma$, $i_{\gamma,0} =i^q_\gamma$, $p_0:= q^0\uparrow^\gamma$ and $c_0:=c^q$.\\
    $\br$ Successor step: Suppose that $\alpha=j+1$ and that $s_j,A_{\gamma,j},i_{\gamma,j}$, $p_j$ and $c_j$ were defined. 
    Use the Prikry Property for $\mathbb R^\ast$ to get $q_\alpha\le^\ast \langle r_\alpha, (\gamma,A_\gamma^p, i_\gamma^p) ,p_j, c_j\rangle$ such that $q_\alpha\|\check\xi_\alpha\in \dot x$.
    
    Let $s_\alpha:=q_\alpha^0\restriction\gamma$, $A_{\gamma,\alpha}:=A^{q_\alpha}_\gamma$, $i_{\gamma,\alpha} :=i^{q_\alpha}_\gamma$,
    $p_\alpha:= q_\alpha^0\uparrow^\gamma$ and $c_\alpha:=c^{q_\alpha}$.\\
    $\br$ Limit step: Let $\alpha$ such that for all $j<\alpha$ $s_j,A_{\gamma,j},i_{\gamma,j},p_j$ and $c_j$ were defined.
    Let $p'_\alpha\le^\ast \Big\langle\langle p^0\downarrow \gamma, p_j, c_j\rangle\Big| j<\alpha\Big\rangle$. 
    Use the Prikry Property for $\mathbb R^\ast$ to get $q_\alpha\le^\ast \langle r_\alpha,(\gamma,A_\gamma^p, i_\gamma^p), p'_\alpha\rangle$ such that $q_\alpha\|\check\xi_\alpha\in \dot x$.
    Let $s_\alpha:=q_\alpha^0\restriction\gamma$, $p_\alpha:= q_\alpha^0\uparrow^\gamma$ and $c_\alpha:=c^{q_\alpha}$.

    Now we can take $p_\gamma\le^\ast_{\mathbb R\uparrow^\gamma} \langle p_j\mid j<\gamma\rangle$ and $c_\gamma\le\langle c_j\mid j<\gamma\rangle$.

    Let $\tilde y:=\{(s,\check\alpha)\mid s\in\mathbb R\downarrow\gamma/r\wedge \langle s, p_\gamma, c_\gamma\rangle\Vdash \check\alpha\in \dot x\}$.
    Let $p'':=\langle r, p_\gamma, c_\gamma\rangle$. Then $p''\Vdash \dot x=\tilde y$.
    
    \begin{claim}
        Let $\alpha<\gamma$, and let $q\le p''$.
        Then $q\Vdash \alpha\in \dot x$ iff $q\downarrow\gamma\Vdash_{\mathbb R\downarrow \gamma} \check\alpha\in\tilde y$.
    \end{claim}
    \begin{proof}
        Let $q\le p''$ such that $q\|\check \alpha \in \dot x$, then there is some $\varepsilon<\gamma$ such that $(q^0\restriction_\gamma,\alpha)=(r_\varepsilon,\xi_\varepsilon)$. 
        Thus there is $s_\varepsilon\le^\ast_{\mathbb R\downarrow\gamma} \langle r_\varepsilon, (\gamma,A_\gamma^p, i_\gamma^p)\rangle$ such that $\langle s_\varepsilon, p_\gamma,c_\gamma\rangle\|\check\alpha\in\dot x$.
        Notice that since $q, \langle s_\varepsilon, p_\gamma,c_\gamma\rangle$ are compatible, we get that they force the same truth value, i.e.,

        \[q\Vdash \check\alpha\in \dot x \Leftrightarrow \langle s_\varepsilon, p_\gamma,c_\gamma\rangle\Vdash \check\alpha\in \dot x.\]       
    \end{proof}
    
    Thus $p''\Vdash \dot x=\tilde y$.
\end{proof}

\begin{theorem}
Let $p\in\mathbb R^\ast$ and let $\delta<\alpha<\kappa$, with $o^u(\alpha)=0$. Suppose that $p\Vdash\dot f\colon\check \delta\ra \check\kappa^+$.
Then there is $p'\le^\ast p$ and some $\mu<\kappa^+$ such that 
$p'\Vdash \dot f:\check \delta\rightarrow\check\mu$.
\end{theorem}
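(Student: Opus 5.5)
We prove this by iterating Lemma~\ref{Prikry Property bounding below kappa} along a $\le^\ast$-decreasing sequence of length $\delta$, and then taking a single lower bound. The one new ingredient is closure of the direct extension order of $\mathbb R^\ast$ under sequences of length $\delta$. This is where $\delta<\alpha<\kappa$ with $o^{\mathcal U}(\alpha)=0$ is used.

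\textbf{Step 1: extend to $\alpha$.} First extend $p$ so that $\alpha$ enters the picture. If $\alpha\in\dom(p)$ we keep $p$. Otherwise we pass to a condition $p+\langle\alpha\rangle$ (or a direct extension of it), which is legitimate as long as $\alpha$ lies in the relevant measure one set. One could instead use the splitting of the lower part: $\mathbb R/r\cong \mathbb R^{l,r}_\beta\times\mathbb R^{u,r}_\beta$ for the least $\beta\in\dom(p)$ above $\alpha$ (or $\beta=\kappa$), with the collapse coordinate $e_\beta\in\col(\beta'^+,<\beta)$.

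The point is that above the coordinates below $\alpha$, the direct extension order $\le^\ast$ on the upper part is closed enough. Concretely:
\begin{itemize}
\item the $\mathbb R$-upper part $\mathbb R^{u,r}$ is $\alpha^+$-closed under $\le^\ast$, by item (3) of the Remark;
\item the $\mathbb T(\dot{\mathbb C})$-component is $\kappa^+$-closed, by the argument of the lemma that $\mathbb D_x$ is $\kappa^+$-closed, transferred to $\mathbb Z_y=\mathbb X_{y^1}$;
\item the collapse coordinates sitting above $\alpha$ are $\alpha^+$-closed (more than $\delta$-closed).
\end{itemize}
So we fix the lower part $p\downarrow\alpha$ and vary only the part above.

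\textbf{Step 2: the construction.} By induction on $\xi<\delta$, build a $\le^\ast$-decreasing sequence $\langle p_\xi\mid \xi<\delta\rangle$ with $p_\xi\downarrow\alpha=p\downarrow\alpha$, together with ordinals $\mu_\xi<\kappa^+$, such that $p_\xi\Vdash \dot f(\check\xi)\le\check\mu_\xi$.
\begin{itemize}
\item At successor and limit stages, first take a $\le^\ast$-lower bound $q_\xi$ of $\langle p_\zeta\mid\zeta<\xi\rangle$ fixing the part below $\alpha$. This exists by the closure from Step 1, since $\xi<\delta<\alpha$.
\item Then apply Lemma~\ref{Prikry Property bounding below kappa} to $q_\xi$ and $\dot\eta=\dot f(\xi)$. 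This yields $p_\xi\le^\ast q_\xi$ and $\mu_\xi$ with $p_\xi\Vdash\dot f(\check\xi)\le\check\mu_\xi$.
\end{itemize}
One caveat: the lemma as stated allows modification of all coordinates. Inspecting its proof, or re-proving it via the Strong Prikry Property (Theorem~\ref{Strong R* Prikry Property}), one checks that it can be done without touching $p\downarrow\alpha$. The trees below $\alpha$ are trivial since $o^{\mathcal U}(\alpha)=0$, and the bounding is achieved in the $\dot c$-coordinate via $\max(\dot c)$.

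\textbf{Step 3: conclusion.} Finally take $p'\le^\ast p_\xi$ for all $\xi<\delta$, again by closure, and set $\mu:=\sup_{\xi<\delta}\mu_\xi+1$. Since $\delta<\kappa<\kappa^+$ and $\kappa^+$ is regular, $\mu<\kappa^+$. Then $p'\Vdash \dot f\colon\check\delta\ra\check\mu$.

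\textbf{Main obstacle.} The main difficulty is Step 1/Step 2's closure claim. We must verify that $\le^\ast$-sequences of length $<\alpha$ that fix the coordinates below $\alpha$ have lower bounds in $\mathbb R^\ast$. This is not just closure of each component separately: the $\mathbb R$-part and the $\dot c$-part are tied together via the ancestral requirement that $\langle A\cup\mathbf a(x)\rangle\Vdash \dot c^{q}\trle\dot c$. To handle this, I would first try the diagonal-intersection and union construction from the $\mathbb D_x$-closure lemma. The measure one sets are intersected (only $\delta<\alpha<\kappa$ many, so no diagonalization is even needed at $\kappa$). The guru indices $i<\kappa^+$ are bounded by their supremum, and the names $\dot c$ are combined by taking unions plus a top point. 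I would then check that the resulting pair is a condition extending each term.
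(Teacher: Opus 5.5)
\textbf{There is a genuine gap.} It sits in your Step 2 ``caveat'', not in the closure issue you flag as the main obstacle. That closure issue is routine: the paper just takes $\le^\ast$-lower bounds of the upper parts and of the $\dot c$'s.

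Lemma~\ref{Prikry Property bounding below kappa} produces $p^\ast\le^\ast p$. A direct extension in $\mathbb R^\ast$ may, and in general must, strengthen the collapse coordinates $e_k$ of the points of $\dom(p)$ below $\alpha$, and also their sets $A_k$ and indices $i_k$. Neither the lemma's proof nor the Strong Prikry Property gives any control over these. The hypothesis $o^{\mathcal U}(\alpha)=0$ only removes the measure one set and guru index at $\alpha$ itself; it does not freeze anything below $\alpha$. So ``inspection'' does not let you keep $p_\xi\downarrow\alpha=p\downarrow\alpha$. If you let the lower parts move instead, your $\delta$-sequence need not have a lower bound: the lower collapse coordinates are not $\delta$-closed (the lowest lives in $\col(\nu^+,<\alpha_0)$). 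A telling symptom is that your argument never genuinely uses $o^{\mathcal U}(\alpha)=0$, since the part above $\alpha$ is $\alpha^+$-closed under $\le^\ast$ regardless.

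Step 1 has a separate problem. Replacing $p$ by $p+\langle\alpha\rangle$ is not a direct extension, so the final $p'$ would not satisfy $p'\le^\ast p$; you should assume $\alpha\in\dom(p)$, as the paper implicitly does. Splitting at the next point $\beta>\alpha$ instead does not give the closure you claim. The coordinate straddling $\alpha$ lives in $\col(\alpha'^+,<\beta)$, where $\alpha'$ is the largest point of $\dom(p)$ below $\alpha$, and it is only $\alpha'^+$-closed, possibly less than $\delta$.

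\textbf{The missing idea, which is the paper's proof, is to range over all lower parts rather than fix one.}
\begin{enumerate}
\item Enumerate $\big(\mathbb R\restriction_\alpha/(p\restriction_\alpha)\big)\times\delta$ as $\langle (r_\varepsilon,\xi_\varepsilon)\mid \varepsilon<\alpha\rangle$. This has length $\alpha$ exactly because $\delta<\alpha$ and $o^{\mathcal U}(\alpha)=0$: as $\alpha$ is inaccessible and carries no measure one set or guru index, the part of $\mathbb R$ below $\alpha$ has size $\alpha$. This is where the hypothesis is really used.
\item At stage $\varepsilon$, apply Lemma~\ref{Prikry Property bounding below kappa} to the condition with lower part $r_\varepsilon$ and the current upper part and $\dot c$. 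This gives $q_\varepsilon$ with $q_\varepsilon\Vdash \dot f(\check\xi_\varepsilon)<\max(\dot c^{q_\varepsilon})$. Record $s_\varepsilon:=q_\varepsilon\restriction_\alpha$, but carry forward only the part above $\alpha$ and the $\dot c$-coordinate.
\item The $\alpha^+$-closure of the upper part handles all $\alpha$ stages. It yields a final $q_\alpha$ with $\max(\dot c^{q_\alpha})$ decided as some $\mu<\kappa^+$.
\item Conclude by density. Suppose $q'\le\langle p\restriction_\alpha, q_\alpha\rangle$ forced $\dot f(\check\xi)\ge\check\mu$, and pick $\varepsilon$ with $(r_\varepsilon,\xi_\varepsilon)=(q'\restriction_\alpha,\xi)$. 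Then $\langle s_\varepsilon, q'\uparrow^\alpha, c^{q'}\rangle$ extends both $q'$ and a condition below $q_\varepsilon$, which forces $\dot f(\check\xi)<\check\mu$, a contradiction.
\end{enumerate}
So the induction has length $\alpha$, not $\delta$, and the original lower part $p\restriction_\alpha$ is recovered only at the end, by density.
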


\begin{proof}
    Let $\langle (r_\varepsilon,\xi_\varepsilon)\mid \varepsilon<\alpha\rangle$ be an enumeration of $\big(\mathbb R\restriction_\alpha{/}_{(p\restriction_\alpha)}\big)\times \delta$.

    Similarly to the proof of Lemma \ref{bounded new subsets below kappa are from R} let us construct inductively $s_\varepsilon,p_\varepsilon,c_\varepsilon$ for every $i<\alpha$.
    
    $\br$ Basis case: For $\varepsilon=0$, from Lemma \ref{Prikry Property bounding below kappa} we can obtain some $q_0\le \langle r_0,p\uparrow^\alpha, c^p\rangle$ such that $q_0\Vdash\dot f(\check\xi_0)<\max(\dot c^{q_0})$.
    Let $s_0:=q_0\restriction_\alpha$, $p_0:=q_0^0\uparrow\alpha$ and $c_0:=c^{q_0}$.\\

    $\br$ Successor step: Suppose that $\varepsilon=j+1$ and that $s_j,p_j,c_j$ were defined.
    By using Lemma \ref{Prikry Property bounding below kappa} we can obtain $q_\varepsilon\le^\ast\langle r_\varepsilon, p_j,c_j\rangle$ such that $q_\varepsilon\Vdash\dot f(\check\xi_\varepsilon)<\max(\dot c^{q_\varepsilon})$.
    Let $s_\varepsilon:=q_\varepsilon\restriction_\alpha$, $p_\varepsilon:=q_\varepsilon^0\uparrow\alpha$ and $c_\varepsilon:=c^{q_\varepsilon}$.

    $\br$ Limit step: Suppose that $\varepsilon<\alpha$ is a limit ordinal and that $j<\varepsilon$ $s_j,p_j,c_j$ were defined for all $j<\varepsilon$. Let $q'\le^\ast\Big\langle \langle r_\varepsilon,p_j, c_j\rangle \Big| j<i \Big\rangle$, and by using Lemma \ref{Prikry Property bounding below kappa} obtain $q_\varepsilon\le^\ast q'$  such that $q_\varepsilon\Vdash\dot f(\check\xi_\varepsilon)<\max(\dot c^{q_\varepsilon})$.
    Let $s_\varepsilon:=q_\varepsilon\restriction_\alpha$, $p_\varepsilon:=q_\varepsilon^0\uparrow\alpha$ and $c_\varepsilon:=c^{q_\varepsilon}$.
    
    Let $q_{\alpha}\le^\ast \Big\langle \langle r,p_j, c_j\rangle \Big| j< \alpha\Big\rangle$ and $\mu<\kappa^+$ such that $\langle A^{q_\alpha}\cup\mathbf{a}(q_\alpha)\rangle\Vdash \max(\dot c^{q_\alpha})=\check\mu$.

    \begin{claim}
            $q_\alpha\Vdash \dot f:\check \delta\ra \check\mu$.
    \end{claim}

    \begin{proof}
        Assume on the contrary that there is $q'\le q_\alpha$ and some $\xi<\delta$ such that $q'\Vdash \dot f(\check \xi)\geq \check\mu$.
        Let $\varepsilon<\alpha$ such that $(r_\varepsilon,\xi_\varepsilon)=(q'\restriction_\alpha,\xi)$, then $\langle s_\varepsilon, q'\uparrow^\alpha,c^{q'}\rangle\le q', \langle s_\varepsilon,p_\alpha, c^{p_\alpha}\rangle$. But this is a contradiction because $\langle s_\varepsilon,q_\alpha, c^{q_\alpha}\rangle\le^\ast \langle s_\varepsilon,p_\varepsilon, c_\varepsilon\rangle\Vdash \dot f(\check\xi)<\max(\dot c_\varepsilon)$. 
    \end{proof}
\end{proof}

Let $G\s \mathbb R^\ast$ be a $V$-generic filter and let $H\s\mathbb R$ be the $V$-generic filter derived from the projection.
\begin{corollary}
    $\text{Card}^{V[G]}=\text{Card}^{V[H]}$.\qed
\end{corollary}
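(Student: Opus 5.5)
The plan is to prove the two inclusions separately, comparing $V[G]$ with $V[H]$, where $V\s V[H]\s V[G]$ because $H$ is obtained from $G$ through the projection $\mathbb R^\ast\ra\mathbb R$, $p\mapsto p^0$.

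\emph{Easy inclusion.} $\text{Card}^{V[G]}\s\text{Card}^{V[H]}$ is immediate. Any cardinal of the larger model $V[G]$ remains a cardinal in the inner model $V[H]$.

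\emph{Reverse inclusion.} I must show that every cardinal of $V[H]$ stays a cardinal in $V[G]$. I split into three ranges.
\begin{itemize}
\item \emph{Cardinals above $\kappa^+$.} Both $\mathbb R$ and $\mathbb R^\ast$ have size $\kappa^+$ under $\gch$. Hence $\mathbb R^\ast$ has the $\kappa^{++}$-chain condition, and no cardinal $\ge\kappa^{++}$ is collapsed.
\item \emph{$\kappa^+$ itself.} Suppose $\kappa^+$ were collapsed in $V[G]$. Then in $V[G]$ there would be a cofinal map $\dot f\colon\check\delta\ra\kappa^+$ with $\delta<\kappa$. In $V[H]$, $\kappa$ is a limit of cardinals of the form $\alpha$ with $o^{\mathcal U}(\alpha)=0$, namely points of the Magidor club of order zero. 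So I may enlarge $\delta$ and find, by density, a condition in $G$ containing some $\alpha>\delta$ in its domain with $o^{\mathcal U}(\alpha)=0$. The preceding Theorem, applied below that condition, yields $p'\le^\ast p$ and $\mu<\kappa^+$ with $p'\Vdash\dot f\colon\check\delta\ra\check\mu$. By density some such $p'$ lies in $G$, which contradicts cofinality. Therefore $\kappa^+$ is preserved. Because $\kappa$ is a limit of $V[H]$-cardinals that are handled in the last range, $\kappa$ is preserved as well.
\item \emph{Cardinals $\lambda<\kappa$ in $V[H]$.} Fix such a $\lambda$ and pick $\gamma\in\dom(r)$ for some $r\in H$ with $\gamma>\lambda$; this is possible since these domains are unbounded in $\kappa$. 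Suppose some $p\in G$ forces that $\lambda$ is collapsed. Then a witnessing collapse map can be coded as a subset $\dot x\s\check\gamma$. By Lemma~\ref{bounded new subsets below kappa are from R}, densely many $p'\le^\ast p$ with $p'\downarrow\gamma=p\downarrow\gamma$ force $\dot x=\tilde y$ for some $\mathbb R\downarrow\gamma$-name $\tilde y$. Since $p'$ can be found in $G$ by density, $x$ belongs to $V[H\downarrow\gamma]\s V[H]$. So $\lambda$ is already collapsed in $V[H]$, a contradiction. Consequently $V[G]$ and $V[H]$ have the same bounded subsets of $\kappa$, which settles every cardinal below $\kappa$.
\end{itemize}

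\emph{Main obstacle.} The step most likely to need care is this density argument. Lemma~\ref{bounded new subsets below kappa are from R} produces $p'\le^\ast p$ only for $\gamma$ already in $\dom(p)$. So I first extend within $G$ to get $\gamma$ into the domain, and then check that the set of $p'$ satisfying the conclusion is dense below the given condition, using that $\le^\ast$-extensions preserving the lower part meet $G$. The analogous bookkeeping arises for the Theorem in the $\kappa^+$ case: I need the order-$0$ point $\alpha$ to be chosen above $\delta$ inside a condition of $G$.
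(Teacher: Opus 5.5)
Your proposal is correct and follows the argument the paper leaves implicit (the corollary is stated without proof). Lemma~\ref{bounded new subsets below kappa are from R} gives that $V[G]$ and $V[H]$ have the same bounded subsets of $\kappa$, the preceding Theorem, applied densely below a condition with an order-$0$ point above $\delta$ in its domain, preserves $\kappa^+$, and a $\kappa^{++}$-c.c.\ argument handles the cardinals above. The one step you leave unjustified, that collapsing $\kappa^+$ yields a cofinal map from some $\delta<\kappa$, is immediate because $\kappa$ is already singular in $V[H]\s V[G]$.
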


\section{Proofs of the main theorems}\label{sec: mainthm proofs}

\begin{theorem}[Main Theorem \ref{1st maintheorem}]\label{main theorem 1}
    Assume $\gch$.
    Let $\rho$ be some regular cardinal.
    Let $\kappa$ be a measurable cardinal with $o(\kappa)\geq \rho$ such that $\rfl(E^{\kappa^+}_{<\kappa})$ holds and is preserved by $\Add(\kappa^+,1)$.
    
    Then, there is a forcing extension in which all cardinals are preserved, $\rfl(\kappa^+)$ holds and $\kappa$ is singular with cofinality $\rho$.  
    
    Moreover, if $\rfl_{<\kappa}(E^{\kappa^+}_{<\kappa})$ holds and is preserved by $\Add(\kappa^+,1)$ then in the forcing extension $\rfl_{<\rho}(\kappa^+)$ holds as well.
\end{theorem}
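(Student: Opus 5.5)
The model will be $V[G]$ with $G\s\mathbb M^\ast$ generic, where the coherent sequence has $o^{\mathcal U}(\kappa)=\nu$ for some limit $\nu$ with $\cf(\nu)=\rho$; taking $\nu=\rho$ works since $o(\kappa)\ge\rho$. Let $H=\pi_{1,0}[G]$ be the induced Magidor generic. Its generic club has order type $\nu$, so $\cf(\kappa)=\rho$ in $V[G]$. By the Corollary following Lemma~\ref{bounded new subsets below kappa are from M}, all cardinals are preserved. The club added by the $\dot c^p$, $p\in G$, is disjoint from $(E^{\kappa^+}_\kappa)^V$. Moreover, in $V[G]$ every ordinal in $E^{\kappa^+}_{\rho}$ either lies in $(E^{\kappa^+}_\kappa)^V$ or already had cofinality $\rho$ in $V$. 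Hence, modulo clubs, every stationary $S\s\kappa^+$ in $V[G]$ is concentrated on $(E^{\kappa^+}_{<\kappa})^V$. Since ordinals of $V$-cofinality $\kappa$ acquire cofinality $\rho$ and everything else keeps its cofinality (no cardinals are collapsed, and small sets come from Magidor-like names), it suffices to reflect stationary $S\s(E^{\kappa^+}_{<\kappa})^V$.

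Fix a name $\dot S$ and a condition forcing $\dot S$ stationary. The plan is to use the projection $\pi_x$ onto $\mathbb X_x/\sim$, which has the dense $\kappa^+$-closed subset $\mathbb D_x$ of size $\kappa^+$ (by $\gch$). It is therefore forcing-equivalent to $\Add(\kappa^+,1)$, so in $V[K]$, $K$ the induced generic, the hypothesis gives $\rfl(E^{\kappa^+}_{<\kappa})$. Choose $x=x(p)$ for the condition $p$ in $G$. Then $V[K]\s V[G]$, and $V[G]$ is a generic extension of $V[K]$ by the quotient $\mathbb M^\ast/K$. Using the Strong Prikry Property (Theorem~\ref{Strong Prikry Property}), I would show that this quotient behaves like Magidor forcing over $V[K]$. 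That is, its direct extension order is $\kappa$-closed modulo $K$, and it has the $\kappa^+$-c.c.\ up to the closed part. From this, for $\alpha\in E^{\kappa^+}_{<\kappa}$, stationarity of subsets of $\alpha$ is preserved at points whose cofinality is not a limit point of the Magidor club. The preservation would be argued as in the $\kappa^+$-preservation Lemma, using decreasing $\le^\ast$ sequences of length $<\kappa$.

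Next, in $V[K]$ consider the set of $\alpha<\kappa^+$ such that some condition in the quotient forces $\alpha\in\dot S$. This set $S'$ is stationary in $V[K]$. If it were not, a club in $V[K]$ disjoint from it would remain a club in $V[G]$, since the quotient preserves $\kappa^+$. Apply reflection in $V[K]$ to the stationary subset of $S'$ of points of cofinality $\mu$, choosing $\mu$ regular and below the first point of the Magidor club above $\max\dom(x)$. This yields $\delta$ with $S'\cap\delta$ stationary and $\cf(\delta)$ small. By the same cofinality bookkeeping, conditions deciding membership in $\dot S$ below $\delta$ can be found uniformly via a $\le^\ast$-fusion of length $\cf(\delta)<\kappa$, and stationarity of $S'\cap\delta$ is then transferred to $S\cap\delta$ in $V[G]$. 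For the ``moreover'' part, I would run the same argument with fewer than $\rho$ names simultaneously, using $\rfl_{<\kappa}$ in $V[K]$ and fusing their deciding conditions together. Names for fewer than $\rho$ sets reduce to fewer than $\kappa$ names over $V[K]$.

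The main obstacle is the transfer step. Stationary subsets of $\kappa^+$ in $V[G]$ must be captured by a stationary set in the $\kappa^+$-closed extension $V[K]$, and the reflection point found there must remain a reflection point after forcing with the quotient. This needs the ancestral forcing definition, which ensures that $\mathbb X_x$ is genuinely closed and that the quotient has the Prikry property over $V[K]$. I would first try to prove a ``quotient Prikry lemma'' mirroring Theorem~\ref{Strong Prikry Property}, working relative to $K$.
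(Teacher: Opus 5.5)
Your overall architecture matches the paper: force with $\mathbb M^\ast$, project onto $\mathbb X_x\simeq\Add(\kappa^+,1)$, and reflect a mirrored set in the intermediate model. However, there is a genuine gap at exactly the transfer step you flag, and it comes from how you set things up. You fix the stem $x$ in advance and let $S'$ be the set of $\alpha$ forced into $\dot S$ by \emph{some} condition of the quotient, with an arbitrary stem extending $x$. Witnesses for different points of $S'\cap\delta$ then have different stems and are in general pairwise incompatible. So stationarity of $S'\cap\delta$ in $V[K]$ says nothing about $S\cap\delta$ in $V[G]$. For instance, suppose $\dot S$ names $T_{\gamma_0}$, where $\gamma_0$ is the first point of the Magidor club above $\max\dom(x)$ and $\langle T_\gamma\mid\gamma<\kappa\rangle\in V$ are disjoint stationary sets. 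Then $S'$ contains the union of all candidate $T_\gamma$, and a reflection point of that union need not be one of $T_{\gamma_0}$.

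A $\le^\ast$-fusion of length $\cf(\delta)$ cannot repair this. It only yields $q$ and fat trees such that $q+t$ decides ``$\check\alpha\in\dot S$'' for maximal $t$, with the answer depending on the branch the generic takes. Since there are $\kappa>\cf(\delta)$ branches, you cannot pigeonhole a stationary subset of $C_\delta$ onto one of them. Two further steps are unjustified. You cannot \emph{choose} the cofinality $\mu$ on which $S'$ concentrates. And reflecting $S'\cap E^{\kappa^+}_\mu$ gives no control on $\cf(\delta)$ beyond $\cf(\delta)>\mu$.

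The missing idea is to pigeonhole on stems \emph{at the level of $\kappa^+$, before reflecting}. In $V[G]$ every $\alpha\in S$ is forced by some $p_\alpha\in G$. Since $|\mathbb Y|=\kappa$ and $\kappa^+$ is regular, there is a single $x\in\mathbb Y$ such that the $\alpha\in S$ forced by conditions in $G$ with stem exactly $x$ form a stationary set. This set is mirrored in $V[g_x]$ by $S''=\{\alpha\mid\exists q\in\mathbb M^\ast/g_x,\ x(q)=x,\ q\Vdash\check\alpha\in\dot S\}$. Now all witnesses share the stem $x$ and have $c$-parts in the filter $g_x$. Hence any $\eta<\kappa$ of them amalgamate into one condition: intersect the top measure-one sets, and take a lower bound of the $c$-parts inside $g_x$ using the $\kappa^+$-closed dense set $\mathbb D_x$.

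Reflect $S''$ in $V[g_x]$ at some $\beta$ with $\cf(\beta)=\eta<\kappa$ not weakly compact, reflecting further inside $\eta$ if necessary. Then $\eta$ is not a limit point of the Magidor club. The set $S''\cap C_\beta$ has size $\eta$, so it lies in $V$, and the amalgamated condition forces it into $\dot S$. The Corollary after Lemma~\ref{bounded new subsets below kappa are from M} keeps it stationary. No Prikry lemma for the quotient over $V[K]$ is needed.

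For the ``moreover'' clause the problem is sharper. The $\dot S_\xi$ may pigeonhole to different stems, and your sketch never uses $\theta<\rho$. The paper applies Theorem~\ref{Strong Prikry Property} to the dense sets $D(\dot S_\xi)$. It then uses $\theta<\rho=\cf(\nu)$ to get $\tilde\alpha=\sup_\xi\alpha_\xi<\nu$, and hence some $\mu\in A^{p^\ast}$ with $o^{\mathcal U}(\mu)=\tilde\alpha$ below which all the trees are $\mu$-fat. Finally it reflects the at most $\mu$ sets $S^t_\xi$ simultaneously in $V[g_{x_\mu}]$ by $\rfl_{<\kappa}$, and amalgamates the witnesses as above.
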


\begin{remark}
    Notice that by \cite[Lemma 42]{HAYUTUnger2020} if $\kappa$ is $\kappa^+$-$\Pi^1_1$-subcompact then there is a generic extension in which its $\Pi^1_1$-subcompactness, and in particular $\rfl_{<\kappa}(E^{\kappa^{+}}_{<\kappa})$, is preserved by $\Add(\kappa^+,1)$.
\end{remark}
Let $G\s \mathbb M^\ast$ be a $V$-generic filter and let $x\in\mathbb Y$. We define 
\[g_x:=\{\dot c\in \mathbb X_x\mid \exists A\in U(\kappa) ~(\langle x, A, \dot c\rangle\in G)\}.\]

\begin{lemma}\label{lem: gx generic}
    If $g_x\neq \emptyset$, then $g_x$ is a $\mathbb X_x$-generic filter.
\end{lemma}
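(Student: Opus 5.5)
The natural route is through the projection $\pi_x$ from the earlier lemma: the filter $g_x$ should be exactly the projection of the generic $G$ restricted below the condition $p_x:=\langle x,\kappa\setminus\max(\dom(x)),\check\emptyset\rangle$, so genericity will follow from the general fact that projections carry generics to generics. The plan has three steps, carried out in this order.

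\emph{Step 1: locating $G$ below $p_x$.} Since $g_x\neq\emptyset$, we may fix $\dot c_0\in g_x$ and $A_0$ with $\langle x,A_0,\dot c_0\rangle\in G$. Then $x$ is the stem of a condition in $G$. I will check that $\langle x,A_0,\dot c_0\rangle\le p_x$ in $\mathbb M^\ast$: the Magidor part clearly extends, and $\dot\emptyset\trle\dot c_0$ is trivially forced. Hence $p_x\in G$, so $G$ is generic for $\mathbb M^\ast/p_x$.

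\emph{Step 2: $g_x$ is a filter.} I will argue the two filter properties separately.
\begin{enumerate}
\item \textbf{Upward closure.} Suppose $\dot c\in g_x$, witnessed by $\langle x,A,\dot c\rangle\in G$, and $\dot c\le_{\mathbb X_x}\dot d$, witnessed by $B$. Then $\langle x,A\cap B,\dot c\rangle\le\langle x,A\cap B,\dot d\rangle$. Moreover $\langle x,A\cap B,\dot c\rangle$ is in $G$, since $A\cap B$ belongs to the intersection of the measures and shrinking the measure-one set is compatible with the generic; this uses a density argument on direct extensions. It follows that $\langle x,A\cap B,\dot d\rangle\in G$, so $\dot d\in g_x$.
\item \textbf{Compatibility.} For $\dot c,\dot d\in g_x$, take a common extension $r\in G$ of the two witnessing conditions. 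I will then use the projection lemma to produce an $\mathbb X_x$-lower bound. The main obstacle sits here: $r$ may have a longer stem than $x$, whereas $\mathbb X_x$ only involves stem $x$. To handle it, I will pass to $\dot c^r$ glued over the $x$-stem. Concretely, I will form $\dot e$ as in the projection proof, by restricting $\dot c^r$ to the $\langle A\cup\mathbf a(x)\rangle$-part. I must then verify that $\langle x,A',\dot e\rangle\in G$ via a density argument. Using the genericity of $G$, some condition in $G$ of the form $\langle x,A',\dot e\rangle$ extends both witnesses. I expect this to need the ancestral nature of forcing: the relations $\dot c\trle\dot c^r$ are forced by $r_\emptyset$, and that condition depends only on $A^r\cup\mathbf a(x(r))\supseteq$ the relevant part.
\end{enumerate}

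\emph{Step 3: genericity.} Let $E\subseteq\mathbb X_x$ be dense. I will show that
\[D_E:=\{q\le p_x\mid q\perp p_x\ \text{or}\ \exists \dot e\in E\ \exists A\ (q\le\langle x,A,\dot e\rangle)\}\]
is dense in $\mathbb M^\ast$ below $p_x$. Given $q\le p_x$, I consider $\pi_x(q)=\dot c^q$. Reading it (via the ancestral forcing and a shrinking of measure-one sets) as an element of $\mathbb X_x$, I extend it to some $\dot e\in E$. By the projection property there is $q'\le^\ast q$ with $\pi_x(q')=\dot e$, and I will arrange that $q'\le\langle x,A,\dot e\rangle$ for a suitable $A$. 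Then $G$ meets $D_E$, yielding a condition $\langle x,A,\dot e\rangle\in G$, so $\dot e\in g_x\cap E$.

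Once more, the delicate point is passing between conditions with stems extending $x$ and conditions with stem exactly $x$. I will try to resolve it by observing that $q\le\langle x,A,\dot e\rangle$ requires only that $q_\emptyset$ force $\dot e\trle\dot c^q$, and that $q_\emptyset$ lies below $\langle A\cup\mathbf a(x)\rangle$ whenever $x(q)$ end-extends $x$ with points taken from $A\cup\mathbf a(x)$.
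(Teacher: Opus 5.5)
Your Step 3 is essentially the paper's proof. The paper fixes an open dense $D\subseteq\mathbb X_x$ and a condition $\langle x,A,\dot c\rangle\in G$. It then shows that $D'=\{q\mid \exists B\,\exists d\in D\ (q\le\langle x,B,d\rangle)\}$ is dense below that condition: it reads $\dot c^q$ at the stem $x$ through the ancestral set $A^q\cup\mathbf a(x(q))$ and extends it inside $\mathbb X_x$ into $D$. Finally it turns a condition in $D'\cap G$ into an element of $g_x\cap D$. Your ending, which gets $\langle x,A,\dot e\rangle\in G$ directly from upward closure of $G$, is a little more direct than the paper's detour through $\langle x,A^q\cup\mathbf a(x(q)),\dot c^q\rangle$ and openness of $D$. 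The paper proves only the meeting of dense sets; it never checks the filter axioms.

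Your proposal goes wrong in Step 2(1). It is false that $\langle x,A,\dot c\rangle\in G$ implies $\langle x,A\cap B,\dot c\rangle\in G$, and no density argument yields it. Take $\alpha\in A\setminus B$ with $\alpha>\max(\dom(x))$. Then $\langle x,A,\dot c\rangle+\langle\alpha\rangle$ is incompatible with $\langle x,A\cap B,\dot c\rangle$, so a generic through it contains the former but not the latter. The generic club lies in a measure-one set only eventually, not immediately above the stem.

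Worse, $g_x$ as a literal set of names need not be upward closed. Take $B=A\setminus\{\alpha\}$ and fix $\zeta\in(E^{\kappa^+}_\kappa)^V$. Let $\dot d$ be $\dot c$ below $\langle B\cup\mathbf a(x)\rangle$ and $\{\check\zeta\}$ on conditions incompatible with it. Then $\dot c\le_{\mathbb X_x}\dot d$ via $B$. But if $\alpha$ enters the generic club, $\dot d$ is evaluated as $\{\zeta\}\notin\mathbb C$. Any $\langle x,A',\dot d\rangle\in G$ would put $\langle A'\cup\mathbf a(x)\rangle$ into the Magidor generic, and that condition forces $\dot d\in\dot{\mathbb C}$, a contradiction.

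So upward closure holds only modulo $\sim$, and there it comes from mixing names. Suppose $\dot c\le_{\mathbb X_x}\dot d$ via $B$. Let $\dot d'$ agree with $\dot d$ below $\langle B\cup\mathbf a(x)\rangle$ and with $\dot c$ on conditions incompatible with it. Then $\dot d'\sim\dot d$ and $\langle x,A,\dot c\rangle\le\langle x,A,\dot d'\rangle$, so $\dot d'\in g_x$.

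Directedness needs no gluing or extra density argument. A common extension $r\in G$ of the two witnesses lies below $\langle x,A^r\cup\mathbf a(x(r)),\dot c^r\rangle$, so $\dot c^r\in g_x$. Moreover $r_\emptyset$ forces $\dot c\trle\dot c^r$ and $\dot d\trle\dot c^r$. This is the same move the paper makes for $c^q$.

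The same mixing trick, with $E$ taken open, also handles the ``delicate point'' you flag at the end of Step 3; the paper glosses over it too. Replace the $\dot e\in E$ obtained via a witness $B$ by the name that is $\dot e$ below $\langle B\cup\mathbf a(x)\rangle$ and $\dot c^q$ elsewhere. This name is still in $E$, and the new points of $x(q)$ no longer need to lie in $B$.
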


Let $D\s \mathbb X_x$ open dense, and let $\dot c\in g_x$. Let $p=\langle x,A,\dot c\rangle\in G$. 
Define 
\[ D':=\{q\in\mathbb M^\ast \mid \exists B\in U(\kappa),\exists d\in D\text{ s.t. } q\le\langle x,B,d\rangle\}.\]
\begin{claim}
    $D'$ is $\mathbb M^\ast$-dense below $p$.
\end{claim}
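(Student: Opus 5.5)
To show $D'$ is dense below $p$, I fix an arbitrary $r\le p$ in $\mathbb M^\ast$ and produce some $q\le r$ that lies in $D'$. The idea is to pass through the projection $\pi_x$, extend in $\mathbb X_x$ into $D$, and pull the extension back.

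\textbf{Step 1: reduce to a condition with stem $x$.} Since $r\le p=\langle x,A,\dot c\rangle$, the stem $x(r)$ end-extends $x$ by finitely many ordinals. I expect the statement to hold only for such $r$ sharing the stem $x$ with a shrunk measure-one part, so that $r\le\langle x,\kappa\setminus\max(\dom(x)),\check\emptyset\rangle$ and $\pi_x$ applies to $r$. In general, though, $r$ adds points beyond $x$. In that case I would instead work with the condition $\langle x, A^r\cap A, \dot c^r\rangle$, or rather with a version of $\dot c^r$ restricted to $\langle (A^r\cap A)\cup\mathbf a(x)\rangle$. This is legitimate because of ancestral forcing: $r_\emptyset\Vdash \dot c^r\in\dot{\mathbb C}$, and $r_\emptyset$ is a pure measure-one condition.

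\textbf{Step 2: find $d\in D$ below $\pi_x(r)$.} Here $\pi_x(r)=\dot c^r$ is viewed as an element of $\mathbb X_x$. Since $D$ is open dense in $\mathbb X_x$, pick $d\in D$ with $d\le_{\mathbb X_x}\dot c^r$.

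\textbf{Step 3: lift $d$ back into $\mathbb M^\ast$.} The projection lemma for $\pi_x$ (or its proof) gives $q\le^\ast r$ with $\pi_x(q)=d$. Concretely, take
\[
q:=\langle x(r),\,A^r\cap B,\,\dot e\rangle,
\]
where $B$ witnesses $d\le\dot c^r$, and
\[
\dot e:=\dot c^r\cup \dot d\restriction\langle (B\cap A^r)\cup\mathbf a(x)\rangle .
\]

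\textbf{Step 4: check $q\in D'$.} We need $q\le\langle x,B',d\rangle$ for some $B'\in U(\kappa)$. The Magidor part is fine because $x(q)$ extends $x$ and $A^q\subseteq B'$; take $B'=A^r\cap B$ together with the points of $x(q)$ beyond $x$. For the club part, we need $q_\emptyset\Vdash d\trianglelefteq \dot e$. This holds since $q_\emptyset\le\langle (B\cap A^r)\cup\mathbf a(x)\rangle$ (using $\mathbf a(x(q))\subseteq\mathbf a(x)$, as in the transitivity lemma), and below that condition $\dot e$ is interpreted as $d$.

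\textbf{Main obstacle.} The hard part is handling $r$ whose stem strictly extends $x$. The $\mathbb X_x$-order is defined relative to conditions with stem exactly $x$, so one must verify that $\dot c^r$, a term forced into $\dot{\mathbb C}$ only ancestrally, i.e. by $\langle A^r\cup\mathbf a(x(r))\rangle$, is equivalent modulo $\sim$ to an element of $\mathbb X_x$. The tool for this is the observation $\mathbf a(x(r))\subseteq\mathbf a(x)$, giving $r_\emptyset\le\langle A^r\cup\mathbf a(x)\rangle$. I would first attempt the gluing trick used to define $\dot e$ above: on conditions compatible with $r_\emptyset$, interpret the term as $\dot c^r$, and elsewhere as $\dot c$. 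This produces a term in $\mathbb X_x$ below $\dot c$ whose further extensions are compatible with $r$.
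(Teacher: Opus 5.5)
Your route is the paper's: read the club part of $r$ as an element of $\mathbb X_x$, extend it into $D$, and pull back to a condition with stem $x(r)$. The paper takes $\langle x(q),B\cap A^q,d\rangle$ directly, while you use the glued $\dot e$ from the projection lemma. The gap is in Step 4, and it is exactly the ``tool'' you name in your last paragraph. The inclusion $\mathbf a(x(q))\s\mathbf a(x)$, i.e.\ $q_\emptyset\le\langle (B\cap A^r)\cup\mathbf a(x)\rangle$, fails as soon as $r$ has a stem point $\gamma>\max(\dom(x))$, e.g.\ $r=p+\langle\gamma\rangle$ with $\gamma\in A$. Such a $\gamma$ lies in the measure-one part of $q_\emptyset$, but not in $A^r$ (which lies above $\gamma$) and not in $\bigcup\mathbf a(x)\s\max(\dom(x))+1$. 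Worse, every generic through $q$ has $\gamma$ in its Magidor club and so does not contain $\langle (B\cap A^r)\cup\mathbf a(x)\rangle$. Along such a generic $\dot e$ is just $\dot c^r$, and nothing yields $q_\emptyset\Vdash d\trle\dot e$. The paper's proof asserts the same inclusion ($(B\cap A^q)\cup\mathbf a(x(q))\s B\cup\mathbf a(x)$), so you follow it here, but the inclusion is not true as stated. Similarly, $\langle x,A^r\cap A,\dot c^r\rangle$ in Step 1 need not lie in $\mathbb M^\ast$. The reason is that $\langle (A^r\cap A)\cup\mathbf a(x)\rangle$ is not below $r_\emptyset$ once $r$ has shrunk some $A^x_\beta$ or inserted a point into it. Your gluing of $\dot c^r$ with $\dot c$ is the right repair for that part, provided you use $\dot c^r$ below $r_\emptyset$, not on conditions merely compatible with it.

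What does hold is $r_\emptyset\le p_\emptyset$, and with a second gluing this is enough.
\begin{itemize}
\item Let $\dot c'$ be $\dot c^r$ below $r_\emptyset$ and $\dot c$ on conditions incompatible with $r_\emptyset$. Then $p_\emptyset\Vdash\dot c'\in\dot{\mathbb C}$, so $\dot c'\in\mathbb X_x$.
\item Take $d\in D$ with $d\le_{\mathbb X_x}\dot c'$, witnessed by $B$, and put $t:=\langle B\cup\mathbf a(x)\rangle$.
\item Let $d''$ be $d$ below $t$ and $\dot c'$ on conditions incompatible with $t$. Then $p_\emptyset\Vdash d''\in\dot{\mathbb C}$. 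Also $d''\le_{\mathbb X_x}d$ via $B\cap A$, since the two terms coincide below $\langle (B\cap A)\cup\mathbf a(x)\rangle$. Hence $d''\in D$ by openness.
\item Now $q:=\langle x(r),A^r,d''\rangle$ satisfies $q_\emptyset=r_\emptyset\Vdash\dot c^r\trle d''$. Hence $q\le^\ast r$ and $q\le\langle x,A,d''\rangle$, so $q\in D'$.
\end{itemize}
The point is that $\le_{\mathbb X_x}$ only sees terms modulo measure-one sets. So the element of $D$ can be changed off $t$ to agree with $\dot c^r$, and no inclusion between $\mathbf a(x(r))$ and $\mathbf a(x)$ is needed.
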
  
\begin{proof}
    Let $q\le p$. Since $q\in\mathbb M^\ast$, $\langle x, A^q\cup \mathbf a(x(q)), c^q\rangle\in \mathbb M^\ast$. 
    Then $c^q\in\mathbb X_x$ and because $D$ is dense in $\mathbb X_x$ there is $d\le c^q$ with $d\in D$. Therefore there is some $B\in U(\kappa)$ with $\langle x, B, d\rangle \le \langle x, B, c^q\rangle$, i.e. it holds that $\langle B\cup \mathbf a(x)\rangle \Vdash \dot c^q \trle \dot d$.
    Notice that $(B\cap A^q)\cup \mathbf a(x(q))\s B\cup \mathbf a(x)$. Therefore if we take $q':=\langle x(q), B\cap A^q, d\rangle$ then, by its definition, $q'\le q,p$ thus $q'\in D'$.
\end{proof}

\begin{proof}[Proof of Lemma \ref{lem: gx generic}]
    Since $D'$ is dense below $p$ then there is some $q\in D'\cap G$.
    Because $q\in D'$ there are some $d\in D$ and $B\in U(\kappa)$ with $q\le \langle x, B,d\rangle$. 
    Let $r:=\langle x, A^q\cup \mathbf a(x(q)),c^q\rangle$, clearly $q\le r\le p$ and thus $r\in G$ so $c^q\in g_x$.
    Since $c^q\le_{\mathbb X_x} d$ it is clear that $c^q\in D$ therefore $c^q\in D\cap g_x$.
\end{proof}

\begin{remark}
    Since $g_x$ is $\mathbb X_x$-generic and $\mathbb D_x$ is a dense $\kappa^+$-closed subset of $\mathbb X_x$, then $\gch$ implies that $\mathbb D_x\simeq \Add(\kappa^+,1)$. 
    Thus in $V[g_x]$ every stationary $S\s E^{\kappa^+}_{<\kappa}$ reflects, because we have assumed that in $V$ every stationary $S\s E^{\kappa^+}_{<\kappa}$ reflects and that this is preserved by $\Add(\kappa^+,1)$.
\end{remark}

Notice that the following lemma completes the proof of Main Theorem \ref{1st maintheorem}.

\begin{lemma} 
    In $V[G]$ every stationary subset of $\kappa^+$ reflects. 
    Moreover if $V$ satisfies $\rfl_{<\kappa} (E^{\kappa^+}_{<\kappa})$, then $V[G]$ satisfies $\rfl_{<\rho}(\kappa^+)$.
\end{lemma}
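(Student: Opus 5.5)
Fix a stationary $S\s\kappa^+$ in $V[G]$ (respectively a family $\mathcal S$ of fewer than $\rho$ such sets). The proof splits $\kappa^+$ by cofinality and treats the relevant part through the projection $\pi_x$ onto $\mathbb X_x$.

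\textbf{Step 1 (small cofinalities).} In $V[G]$ we have $\cf(\kappa)=\rho$. Points of cofinality $\rho$ (and, more generally, points of $V$-cofinality $\kappa$) lie in $(E^{\kappa^+}_\kappa)^V$, which the club added by the generic for $\dot{\mathbb C}$ avoids. Intersecting $S$ with this club, we may assume $S\s (E^{\kappa^+}_{<\kappa})^V$. Since there are fewer than $\rho$ sets in $\mathcal S$, we may further assume every $S\in\mathcal S$ is concentrated on a single $V$-cofinality $\mu_S<\kappa$. This is done by splitting by cofinality, using $\kappa^+$-completeness of the nonstationary ideal.

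\textbf{Step 2 (localise to some $x$).} Pick $p\in G$ and names $\dot S$ ($\dot{\mathcal S}$) forced to be stationary. By density, find $x\in\mathbb Y$ with $g_x\neq\emptyset$ and $p$ of the form $\langle x,A,\dot c\rangle$. By Lemma \ref{lem: gx generic}, $g_x$ is $\mathbb X_x$-generic. By the remark following that lemma, $V[g_x]$ satisfies $\rfl(E^{\kappa^+}_{<\kappa})$ (resp.\ $\rfl_{<\kappa}$), since $\mathbb D_x\simeq\Add(\kappa^+,1)$.

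Next define in $V[g_x]$ the set
\[S^\ast:=\{\delta\in E^{\kappa^+}_{<\kappa}\mid \exists q\le p,\ \dot c^q\in g_x,\ q\Vdash\check\delta\in\dot S\}.\]
This is the set of points which may still enter $S$ compatibly with $g_x$. By Lemma~\ref{bounded new subsets below kappa are from M} and the $\kappa^+$-c.c.\ of $\mathbb M$, $V[G]$ is essentially $V[g_x]$ extended by a Magidor generic, and $S\s S^\ast$. The quotient $\mathbb M^\ast/g_x$ is essentially the Magidor forcing $\mathbb M$ below $x$ (the $\dot c$-coordinate is absorbed by $g_x$), which is $\kappa^+$-c.c. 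A $\kappa^+$-c.c.\ forcing preserves stationary subsets of $\kappa^+$, so $S^\ast$ is stationary in $V[g_x]$. Reflection in $V[g_x]$ then yields many $\delta<\kappa^+$ with $S^\ast\cap\delta$ stationary. For simultaneous reflection, apply $\rfl_{<\kappa}$ to the fewer than $\rho<\kappa$ sets $S^\ast$ simultaneously.

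\textbf{Step 3 (pulling reflection back).} We need some reflection point $\delta$ at which $S\cap\delta$ itself, not only $S^\ast\cap\delta$, remains stationary in $V[G]$. This is the main obstacle. The plan is to choose $\delta$ with $\cf^V(\delta)=\mu$ regular below $\kappa$ and not a limit point of the Magidor club. By the Corollary after Lemma~\ref{bounded new subsets below kappa are from M}, stationary subsets of such $\mu$ are preserved.

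To find such a $\delta$, run a density argument. Suppose toward a contradiction that $q\le p$ forces every such $\delta$ to carry a club $C_\delta$ disjoint from $\dot S$. Use the Strong Prikry Property (Theorem \ref{Strong Prikry Property}) to shrink $q$ directly, so that for each $\delta$ the name $C_\delta$ is decided up to a Magidor name of size $<\kappa$. Integrating over the fat trees, we obtain in $V[g_x]$ a club in $\delta$ disjoint from a stationary part of $S^\ast\cap\delta$. The set $S^\ast$ can be refined in advance: replace it by the set of $\delta$ forced into $\dot S$ by the direct extension after every maximal branch of the tree, which is still stationary by the same chain-condition argument. Reflecting this refined $S^\ast$ then gives the contradiction.

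The technical heart is this uniformisation, performed before reflecting in $V[g_x]$. I expect it to use the closure of $\mathbb D_x$ together with the diagonal-intersection constructions from the proof that $\mathbb D_x$ is $\kappa^+$-closed.
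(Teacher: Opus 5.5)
\textbf{The gap is in Step 3, where the proof actually happens.} Your $S^\ast$ quantifies over all $q\le p$, whose stems extend $x$ in arbitrary ways. Witnesses for different points of $S^\ast\cap\delta$ therefore cannot be amalgamated, and reflecting $S^\ast$ in $V[g_x]$ says nothing about $\dot S$. (Stationarity of $S^\ast$ in $V[g_x]$ is trivial from $S\s S^\ast$ and $V[g_x]\s V[G]$; the $\kappa^+$-c.c.\ of the quotient is neither shown nor needed.)

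The refinement you gesture at is the right set: the points forced into $\dot S$ by a condition with a fixed stem and $c$-part in $g_x$. But ``the same chain-condition argument'' does not make it stationary, because $S$ is no longer contained in it. Nor can it be defined ``in advance'' relative to direct extensions chosen separately for each $\delta$: there are $\kappa^+$ many $\delta$'s, and $\le^\ast$ is not closed enough to handle them all at once.

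The missing idea is a pigeonhole over stems in $V[G]$. Each $\gamma\in S$ is forced into $\dot S$ by some condition of $G$ (taken below one forcing non-reflection). Since $|\mathbb Y|=\kappa$ and $\kappa^+$ stays regular, a single stem $x$ witnesses a stationary part of $S$. Its mirror
\[S''=\{\gamma\mid \exists q\,(x(q)=x,\ \dot c^q\in g_x,\ q\Vdash\check\gamma\in\dot S)\}\]
contains that part, so $S''$ is stationary in $V[g_x]$. It therefore reflects at some $\delta$ with $\eta=\cf(\delta)<\kappa$. The paper takes $\eta$ not weakly compact, which keeps it off the Magidor club. You cannot instead pick $\delta$ by reference to the Magidor club, since that club is not in $V[g_x]$.

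Now fix a $V$-club $C_\delta$ of order type $\eta$. The fewer than $\kappa$ witnesses $q_\gamma$, $\gamma\in C_\delta\cap S''$, all have stem $x$ and $c$-parts in $g_x$, so they have a common direct extension. That condition forces $S''\cap C_\delta\s\dot S$, and preservation of stationary subsets of $\eta$ finishes the argument. No contradiction hypothesis ranging over all $\delta$, and no uniformisation over $\kappa^+$ many $\delta$'s, is needed.

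\textbf{The simultaneous case.} For $\rfl_{<\rho}$ the same obstruction recurs one level up. Different $\dot S_\xi$ may be witnessed along different stems, so no single $x$ serves them all, and ``apply $\rfl_{<\kappa}$ to the sets $S^\ast$'' has no pull-back.

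The paper applies the Strong Prikry Property successively, for $\xi<\theta$, to the dense sets
\[D(\dot S_\xi)=\{p\mid \dot c^p\Vdash_{\mathbb X_{x(p)}}\text{the stem-}x(p)\text{ mirror of }\dot S_\xi\text{ is stationary}\}.\]
This yields one $p^\ast$ and trees $T^\ast_\xi$ that split only in measures $U_{\kappa,\alpha}$ with $\alpha<\alpha_\xi$. Since $\theta<\rho=\cf(\nu)$, the supremum $\tilde\alpha=\sup_\xi\alpha_\xi$ is below $\nu$. So there is $\mu\in A^{p^\ast}$ with $o^{\mathcal U}(\mu)=\tilde\alpha$ for which every $T^\ast_\xi\cap[\mu]^{<\omega}$ is $\mu$-fat.

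After adding $\mu$ to the stem, only the at most $\mu$ branches $t$ below $\mu$ matter. In $V[g_{x_\mu}]$ one reflects the sets $S^t_\xi$, together with $E^{\kappa^+}_{\mu^+}$, simultaneously via $\rfl_{<\kappa}$. The top parts of the fewer than $\kappa$ witnesses are then amalgamated into a single $p^\ast_\mu$ that works for every $t$. This device, pushing all the branching below one stem point of suitable Mitchell order, is what your sketch lacks for the simultaneous version.
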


\begin{proof}
Assume, toward a contradiction, that $S\s (\kappa^+)^V$ is in $V[G]$ stationary and does not reflect.
For all $\alpha\in S$ there is $p_\alpha\in G$ such that $p_\alpha\Vdash \check \alpha\in \dot S$.
Since $|\mathbb Y|=\kappa$ there is $x\in \mathbb Y$ such that
$S':=\{\gamma\mid \exists A\in U(\kappa) \exists c \text{ s.t. } \langle x, A,c\rangle \Vdash \check \gamma\in \dot S\}$ is stationary.

Let $\dot S^\prime$ be an $\mathbb M^\ast$ name for $S'$, we shall try to mirror $S'$ in $V[g_x]$. Define
\[S^\dprime:=\{\alpha \mid \exists q\in \nicefrac{\mathbb M^\ast}{g_x} \text{ s.t. } x(q)=x \text{ and } q\Vdash\check\alpha\in\dot{S^\prime}\}.\]
Notice that $S^\dprime$ reflects in $V[g_x]$, thus let $\beta<\kappa^+$ such that $S^\dprime\cap \beta$ is stationary in $\beta$ and $\cf(\beta)=\eta<\kappa$.
Let $C_\beta\s \beta$ be a club such that $\otp(C_\beta)=\eta$ and assume without loss of generality that $\eta$ is not weakly-compact.
For every $\alpha\in C_\beta\cap S^\dprime$, let $q_\alpha\in\nicefrac{\mathbb M^\ast}{g_x}$ such that $x(q_\alpha)=x$ and $q_\alpha\Vdash\check\alpha\in\dot{S^\prime}$. 
Enumerate $\langle q_\alpha\mid\alpha\in C_\beta\cap S^\dprime\rangle$ and notice that for all $\alpha,\alpha'\in C_\beta\cap S^\dprime$ $q_{\alpha},q_{\alpha'}$ are compatible. 
Since $(\mathbb M\restriction_x,\le^\ast)$ is $<\kappa$-closed and $|C_\beta\cap S^\dprime|=\eta<\kappa$, there is some $q$ extending all $\langle q_\alpha\mid\alpha\in C_\beta\cap S^\dprime\rangle$.
But then $q\Vdash S'\cap C_\beta\neq \emptyset$.\\

Moreover, assume that $\vec S=\langle \dot S_\xi\mid \xi<\theta\rangle$ is such that $1_{\mathbb M^\ast}\Vdash \dot S_\xi\s \check\kappa^+ \text{ is stationary}$ and $\theta<\rho$.
For $\xi<\theta$ let $D(\dot S_\xi):=\{p\in\mathbb M^\ast\mid c^p\Vdash_{\mathbb X_{x(p)}} S^\dprime_{\xi,x(p)} \text{ is stationary} \}$. 
Use the Strong Prikry Property \ref{Strong Prikry Property} for $D(\dot S_0)$ to obtain $p_0\le^\ast 1_{\mathbb M^\ast}$ and $T_0$ a $\kappa$-fat tree.
If for all $\zeta<\xi$ $p_\zeta$ is defined let $p_{\xi}\le^\ast \langle p_\zeta\mid \zeta<\xi\rangle$ and $T_{\xi}$ from applying the Strong Prikry Property \ref{Strong Prikry Property} for $D(\dot S_{\xi})$.
Let $p^\ast\le^\ast\langle p_\xi\mid \xi<\theta\rangle$.

For all $\xi<\theta$ there is $\alpha_\xi$ such that there is $T_\xi^\ast\s T_\xi$ $\kappa$-fat such that for all non-maximal $t\in T_\xi^\ast$ there is $\alpha<\alpha_\xi$ with $\Succ_{T_\xi^\ast}(t)\in U_{\kappa,\alpha}$.
Let $\tilde\alpha=\sup_{\xi<\theta}\alpha_\xi$. 
Therefore there is some $\mu\in A^{p^\ast}$ with $o^{\mathcal {U}}(\mu)=\tilde \alpha$ such that for all $\xi<\theta$ $T_\xi^\ast\cap [\mu]^{<\omega}$ is $\mu$-fat.

Let $p_\mu:=p^\ast +\langle\mu\rangle$ and let $x_\mu:= x(p_\mu)$. 

Work now in $V[g_{x_\mu}]$:
Let $\xi<\theta$. For all possible $t\in T_\xi^\ast$ below $\mu$, define
$S^t_\xi:=\{\alpha<\kappa^+\mid \exists A\in U(\kappa) \exists c\in g_{x_\mu} ~ \langle x_\mu, A, c\rangle + t\Vdash \check \alpha \in \dot S_\xi\}$.
Clearly $S^t_\xi$ is stationary in $V[g_{x_\mu}]$.

Let $\mathcal A:=\{S_\xi^t\mid \xi<\theta \wedge t\in T_\xi^\ast\cap[\mu]^{<\omega}\}$.
Since $|\mathcal A|=\mu<\kappa$ and $V[g_{x_\mu}]$ satisfies $\rfl_{<\kappa} (E^{\kappa^+}_{<\kappa})$ there is some $\delta<\kappa^+$ such that for all $S\in\mathcal A\cup\{E^{\kappa^+}_{\mu^+}\}$ $S\cap\delta$ is stationary and $\cf(\delta)>\mu$.

Let $\eta:=\cf(\delta)$ and let $C_\delta\s \delta$ be a $V$-club with $\otp(C_\delta)=\eta$. 

Let $\xi<\theta$, and let $t\in T_\xi^\ast$ below $\mu$.
Thus for all $\gamma\in S^t_\xi\cap C_\delta$ there is $A(\xi,t,\gamma)\in U(\kappa)$ and there is $c(\xi,t,\gamma)\in g_{x_\mu}$ such that $\langle x_\mu, A(\xi,t,\gamma), c(\xi,t,\gamma)\rangle \Vdash \check\gamma\in \dot S_\xi$.

Let $A^\ast:=\bigcap\{A(\xi,t,\gamma)\mid \xi<\theta, t\in T_\xi^\ast\cap[\mu]^{<\omega}, \gamma\in S^t_\xi\cap C_\delta\}$, and let $c^\ast\le\langle c(\xi,t,\gamma)\mid \xi<\theta, t\in T_\xi^\ast\cap[\mu]^{<\omega}, \gamma\in S^t_\xi\cap C_\delta\rangle$.

Thus $p^{\ast}_\mu:=\langle x_\mu, A^\ast, c^\ast\rangle\le^\ast p_\mu$.
Hence, for every maximal $t\in T_\xi^\ast$ below $\mu$ that is possible for $p^\ast_\mu$, $p^\ast_\mu+t\Vdash \dot S_\xi\cap C_\delta \text{ is stationary}$. Therefore for every $\xi<\theta$ $p^\ast_\mu \Vdash \dot S_\xi\cap \check \delta \text{ is stationary}$.
\end{proof}

\subsection*{\texorpdfstring{Achieving stationary reflection at $\aleph_{\nu+1}$}{Achieving stationary reflection at aleph(nu+1)}}\label{subsec: downto getting stat}

\begin{theorem}[Main Theorem \ref{2nd mainthorem}] \label{main theorem 2}
Assume $\gch$.
    Let $\rho$ be some regular cardinal.
    If in $V$ $\kappa$ is measurable with $o(\kappa)\geq \rho$ and $\rfl(E^{\kappa^+}_{<\kappa})$ holds and is preserved by $\Add(\kappa^+,1)$ and 
    for any stationary $S\s E^{\kappa^+}_{<\kappa}$, 
    there are unboundedly many regular cardinals $\mu < \kappa$ such that $T(\{S\}) \cap E^{\kappa^{+}}_\mu \neq \emptyset$ and one of the following holds:
    \begin{enumerate}        
        \item $\mu$ is not a successor of a singular cardinal, 
        \item $\ap_{\mu}$ holds. 
    \end{enumerate}
    Then there is a forcing extension in which $\kappa=\aleph_{\rho}$, $\kappa^+=\aleph_{\rho+1}$ and in which $\rfl(\aleph_{\rho+1})$ holds. 
\end{theorem}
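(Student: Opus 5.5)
We force with $\mathbb R^\ast$, built from the coherent sequence and gurus of Lemma~\ref{coherentsequence} with $o^{\mathcal U}(\kappa)=\nu$ and $\cf(\nu)=\rho$; the natural choice is $\nu=\rho$ when $\rho$ is a limit ordinal below $\kappa$, so that the collapses make $\kappa=\aleph_\rho$. Let $G\s\mathbb R^\ast$ be generic and let $H\s\mathbb R$ be the induced generic. The plan has three stages.

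\textbf{Cardinal arithmetic.} By the Corollary following the boundedness theorem for $\mathbb R^\ast$, $\text{Card}^{V[G]}=\text{Card}^{V[H]}$. By the properties of $\mathbb R$ listed in the Remark, the surviving cardinals below $\kappa$ are exactly $\omega,\omega_1$ and the points $\beta,\beta^+$ with $\beta$ on the Magidor club. This club has order type $\nu$, so $\kappa=\aleph_\rho$. Lemma~\ref{Prikry Property bounding below kappa} and the boundedness theorem show that $\kappa^+$ is preserved, so $\kappa^+=\aleph_{\rho+1}$.

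\textbf{Reflection.} We argue by contradiction, closely following the proof for $\mathbb M^\ast$. Suppose $S\s\kappa^+$ is stationary in $V[G]$ and does not reflect.

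1. Points of cofinality $\kappa$ in $V$ lie off the club shot by $\dot{\mathbb C}$. Points of $V$-cofinality $<\kappa$ that become of cofinality $\omega$ or $\omega_1$ can be handled by reflecting to points of uncountable cofinality. So we may assume $S\s(E^{\kappa^+}_{<\kappa})^V$.

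2. Since $|\mathbb Y'|=\kappa$, there is a stem $y$ for which $S'=\{\gamma\mid\exists A,i,c\ \langle y,(A,i),c\rangle\Vdash\check\gamma\in\dot S\}$ is stationary.

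3. Because $\mathbb R^\ast$ projects onto $\mathbb M^\ast$ via $\pi^1$, the filter $g_{y^1}$ is $\mathbb Z_y=\mathbb X_{y^1}$-generic by Lemma~\ref{lem: gx generic}. Its dense part $\mathbb D_{y^1}$ is $\kappa^+$-closed, so under $\gch$ it is isomorphic to $\Add(\kappa^+,1)$. Hence in $V[g_{y^1}]$, $\rfl(E^{\kappa^+}_{<\kappa})$ holds, and the mirrored set $S''$ is stationary there.

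4. Choose a reflection point $\beta\in T(\{S''\})\cap E^{\kappa^+}_\mu$. Here $\mu$ is one of the unboundedly many regular cardinals from the hypothesis, taken above $\max\dom(y)$, and we arrange $\mu$ to be one of the cardinals that survive in $V[H]$, namely some $\alpha$ or $\alpha^+$ on the Magidor club.

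5. Fix a club $C_\beta$ of order type $\mu$. For $\alpha\in C_\beta\cap S''$ choose witnesses $q_\alpha$ with stem $y$. Amalgamate them using $\le^\ast$-closure of the part of $\mathbb R^\ast$ above $\mu$, via the factorization $\mathbb R/r\cong\mathbb R^{l}\times\mathbb R^{u}$. Combined with Lemma~\ref{bounded new subsets below kappa are from R}, this reduces the question to whether $S'\cap C_\beta$ remains stationary in $\beta$ after the lower forcing $\mathbb R^{l}$, which has small chain condition.

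\textbf{Main obstacle: preserving stationarity in $\beta$.} This is where the dichotomy in the hypothesis is used. If $\mu$ is not the successor of a singular cardinal, the lower part is $\mu$-cc (or $\mu$ is a club point), so stationary subsets of $\mu\cong\beta$ are preserved. If $\ap_\mu$ holds, then $S\cap E^\beta_{<\mu}$ is approachable, and the collapse components of $\mathbb R$, with closure $\col(\alpha^+,\cdot)$, preserve stationarity of approachable sets; this is the standard argument that approachable stationary sets survive forcings of the form $\mathbb R^{l}$.

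Then $S\cap\beta$ is stationary in $V[G]$, which is the contradiction.
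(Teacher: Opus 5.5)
Steps 1--3 follow the paper. Step 4, however, has a genuine gap, and it sits exactly where the theorem's difficulty lies.

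You ``arrange $\mu$ to be one of the cardinals that survive in $V[H]$'', but nothing permits this. The hypothesis only gives an unbounded set $M\s\kappa$ of admissible cofinalities. The generic club $K$ meets a ground-model set cofinally only if that set belongs to some $U_{\kappa,i}$, and likewise for $\{\alpha\mid\alpha^+\in M\}$. For instance, $M$ may consist of double successor cardinals, whereas every point of $K$ is inaccessible in $V$. Moreover, $M$ is computed from $S''$, i.e.\ after $G$ is fixed, so no density argument can steer $K$ into it. Hence, in general, the cofinality $\mu$ of any reflection point $\delta$ lies strictly between two consecutive points $\gamma_0<\mu<\gamma_1$ of $K$, and $\mu$ is collapsed to $\gamma_0^+$ by $\col(\gamma_0^+,<\gamma_1)$. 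The forcing that adds clubs in $\mu$ is then the $\mathbb R\restriction\gamma_1$ part (by Lemma~\ref{bounded new subsets below kappa are from R}), and it is not $\mu$-c.c. So your reduction in Step 5 and the first branch of your case analysis both fail. A telling symptom: were your arrangement possible, the dichotomy would be superfluous. For $\mu=\alpha^+$, or $\mu=\alpha$ with $o^{\mathcal U}(\alpha)=0$, the forcing splits into a $\mu$-c.c.\ part and a $\mu$-closed part, so all stationary subsets of $\mu$ survive, and such $\mu$ satisfy clause (1) automatically.

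The missing idea is to let $\mu$ be collapsed and use Shelah's theorem: a stationary subset of $E^\mu_\sigma$ lying in $I[\mu]$ remains stationary after any $\sigma^+$-closed forcing. This needs two things you never arrange.

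First, the cofinalities must sit below the closure of the collapse. The forcing $\col(\gamma_0^+,<\gamma_1)$ gives $\mu$ cofinality $\gamma_0^+$ and makes every subset of $E^\mu_{\ge\gamma_0^+}$ nonstationary, approachable or not. So the paper first shrinks $S''$ to a stationary $S''\cap E^{\kappa^+}_\sigma$ for a single $\sigma<\kappa$. It then takes a reflection point $\delta$ with $\cf(\delta)=\mu$ and consecutive points $\sigma<\gamma_0<\mu<\gamma_1$ of $K$, so that $o^{\mathcal U}(\gamma_1)=0$.

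Second, fix a ground-model club $C(\delta)$ of order type $\mu$. The trace $\hat S=\{i<\mu\mid C(\delta)(i)\in S''\}\s E^\mu_\sigma$ must lie in $I[\mu]$. This, not a chain condition, is what the dichotomy supplies: clause (2) directly, clause (1) via Shelah's theorem under $\gch$.

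Then every club of $\mu$ in $V[G]$ lies in the $\mathbb R\restriction\gamma_1$-extension. That forcing is a $\gamma_0^+$-c.c.\ part times the $\gamma_0^+$-closed collapse, and $\hat S$ survives both.

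A smaller point: the amalgamation in Step 5 cannot rely on $\le^\ast$-closure ``above $\mu$'', since the top collapse coordinate of $y$ is only $(\max\dom(y))^+$-closed. Because all witnesses share the stem $y$, you simply intersect the measure-one sets, take the supremum of the guru indices, and take a lower bound of the $\dot c$'s in $g_{y^1}$.
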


\begin{proof}
     Let $K:=\{\alpha<\kappa\mid \exists p\in G~(\alpha\in\dom(p))\}$ and let $K_0:=\{\alpha\in K\mid o^{\mathcal {U}}(\alpha)=0\}$. 
        
    Assume on the contrary that $S\s (\kappa^+)^V$ is in $V[G]$ stationary and does not reflect. For all $\alpha\in S$ there is $p_\alpha\in G$ such that $p_\alpha\Vdash \check \alpha\in \dot S$. 
    Since $|\mathbb Y'|=\kappa$ there is $y\in \mathbb Y'$ such that
    $S':=\{\gamma\mid \exists I\in \mathcal G(\kappa) \exists C \text{ s.t. } \langle y, I,C\rangle \Vdash \check \gamma\in \dot S\}$ is stationary.
    Let $x:=\pi^1[y]$.

    Let $\dot S'$ be an $\mathbb R\ast$ name for $S'$, we shall try to mirror $S'$ in $V[g_x]$. Since $y\in V[g_x]$, define 
    \[S^\dprime_y:=\{\alpha \mid \exists q\in \nicefrac{\mathbb R^\ast}{g_x} \text{ s.t. } y(q)=y \text{ and } q\Vdash\check\alpha\in\dot{S^\prime}\}.\]
    Since $S^\dprime_y$ is stationary in $V[g_x]$, there is some $\beta<\kappa$ such that $S^\dprime_y\cap S_\beta^{\kappa^+}$ is stationary. Take $S_y:=S^\dprime_y\cap S_\beta^{\kappa^+}$.

    Let $\gamma_0=\min \{\xi\in K\mid \xi>\beta\}$ and let $\gamma_1=\min\{\xi\in K_0\mid \xi> \gamma_0 \}$. 
    Notice that $S_y$ reflects in $V[g_x]$, thus pick 
    $\eta \in T(\{S_y\})\setminus \gamma_1$.
        
    \begin{claim}
        If $\ap_{\alpha^+}$ holds for all singular cardinal $\alpha$ then $S_y\cap \delta$ remains stationary in $V[G]$.
    \end{claim}

    \begin{proof}
        Suppose that $\eta=\alpha^+$ for some singular cardinal $\alpha$.
        Let $C(\delta)\s \delta$ be a club in $V$ such that $\otp(C(\delta))=\eta$.
        Let $\hat S=\{ i<\eta\mid C(\delta)(i)\in S_y\}$. Clearly $\hat S\s E^\eta_\beta$ stationary.
        Assume $p\Vdash\dot x\s \check\eta \text{ is a club}$.
        Let $p\in G$ such that $\gamma_0,\gamma_1\in \dom (p)$, $\beta<\gamma_0<\eta<\gamma_1$, $\gamma_1=\min (\dom(p)\setminus \gamma_0+1)$ and $o^{\mathcal {U}}(\gamma_1)=0$.
        By \ref{bounded new subsets below kappa are from R} there is $\tilde y$ 
        an $\mathbb R\restriction_{\gamma_1}$ name and $p'\in G$ such that 
        $p'\Vdash \dot x=\tilde y$.
        Since $\ap_\eta$ holds $\hat S\in I[\eta]$, i.e. 
        there are a sequence $\langle b_\xi\mid \xi<\eta\rangle$ and a club $X\s \eta$ such that for all $\xi\in X\cap \hat S$ there is some $x_\xi\s\xi$ unbounded, $\otp(x_\xi)=\cf(\xi)$ with the property
        $\forall\zeta\in x_\xi$, $x_\xi\cap \zeta\in\{b_\varepsilon\mid \varepsilon<\xi\}$.

        By \cite{Sh:108} $p\restriction_{\gamma_1} \Vdash \tilde y\cap \check{\hat S}\neq \emptyset$.
        Therefore $p'\Vdash \dot x\cap \check{\hat S}\neq\emptyset$.

        Thus $S_y\cap\delta$ remains stationary in $V[G]$.
    \end{proof}

    If $\ap_{\xi^+}$ does not hold for all singular cardinals $\xi<\kappa$, then there is some $\delta<\kappa^+$ with $\cf (\delta)=\eta$ such that $\eta>\beta$ is a not a successor of a singular cardinal and in $V[g_x]$ $S_y\cap \delta$ is stationary in $\delta$.

    \begin{claim}
        $S_y\cap \delta$ remains stationary in $V[G]$.
    \end{claim}
    \begin{proof}
        Let $C(\delta)\s \delta$ a be club in $V$ such that $\otp(C(\delta))=\eta$.
        Let $\hat S=\{ i<\eta\mid C(\delta)(i)\in S_y\}$. Clearly $\hat S\s E^\eta_\beta$ stationary.
        Let $\eta=\cf^V(\delta)$, and assume $p\Vdash\dot x\s \check\eta \text{ is a club}$.
        Let $p\in G$ such that $\gamma_0,\gamma_1\in \dom (p)$, $\gamma_0<\eta<\gamma_1$, $\gamma_1=\min (\dom(p)\setminus \gamma_0+1)$ and $o^{\mathcal {U}}(\gamma_1)=0$.
        By \ref{bounded new subsets below kappa are from R} there is 
        an $\mathbb R\restriction_{\gamma_1}$ name $\tilde y$ and $p'\in G$ such that 
        $p'\Vdash \dot x=\tilde y$.
        By \cite{Sh:108} $S_\beta^\eta\in I[\eta]$ thus $p\restriction_{\gamma_1} \Vdash \tilde y\cap \check{\hat S}\neq \emptyset$.
        Therefore $p'\Vdash \dot x\cap \check{\hat S}\neq\emptyset$.

        Thus $S_y\cap\delta$ remains stationary in $V[G]$.
    \end{proof}
\end{proof}

\begin{theorem}[Main Theorem \ref{3rd mainthorem}] \label{main theorem 3}
   Assume $\gch$.
    Let $\rho$ be some regular cardinal.
    If in $V$ $\kappa$ is measurable with $o(\kappa)\geq \rho$ and $\rfl_{<\kappa}(E^{\kappa^+}_{<\kappa})$ holds and is preserved by $\Add(\kappa^+,1)$ and 
    for any collection $\mathcal S\s\mathcal P(E^{\kappa^+}_{<\kappa})$ consisting $<\kappa$ stationary subsets of $E^{\kappa^+}_{<\kappa}$, 
    there are unboundedly many regular cardinals $\mu < \kappa$ such that $T(\mathcal S) \cap E^{\kappa^{+}}_\mu \neq \emptyset$ one of the following holds:
    \begin{enumerate}        
        \item $\mu$ is not a successor of a singular cardinal, 
        \item $\ap_{\mu}$ holds. 
    \end{enumerate}
    Then there is a forcing extension in which 
    $\kappa=\aleph_{\rho}$, $\kappa^+=\aleph_{\rho+1}$ and in which $\rfl_{<\rho}(\aleph_{\rho+1})$ holds.
    \end{theorem}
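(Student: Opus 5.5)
The plan is to force with $\mathbb R^\ast$ over $V$, built from the coherent measure and guru sequences of Lemma~\ref{coherentsequence} with $o^{\mathcal U}(\kappa)=\nu$, $\cf(\nu)=\rho$. Let $G\s\mathbb R^\ast$ be generic and $H$ the induced $\mathbb R$-generic. The cardinal arithmetic is immediate: by the Corollary $\text{Card}^{V[G]}=\text{Card}^{V[H]}$, and by the Remark on $\mathbb R$ (from \cite{AMGC}) the cardinals below $\kappa$ are exactly $\omega,\omega_1$ and the points $\beta,\beta^+$ of the generic club. That club has order type $\nu$, so $\kappa=\aleph_\rho$ in the intended sense, and $\kappa^+$ is preserved by Lemma~\ref{Prikry Property bounding below kappa}. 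What remains is simultaneous reflection. I would merge the combinatorial part of the proof of Main Theorem~\ref{main theorem 2} with the ``moreover'' part of the proof of Main Theorem~\ref{main theorem 1}.

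The first step fixes names $\langle\dot S_\xi\mid\xi<\theta\rangle$ with $\theta<\rho$, each forced to be a stationary subset of $\kappa^+$. As in Theorem~\ref{main theorem 1}, I iterate the Strong Prikry Property for $\mathbb R^\ast$ (Theorem~\ref{Strong R* Prikry Property}) $\theta$ times. Each time I use a dense set of the form $D(\dot S_\xi)$: the conditions $p$ whose $\mathbb M^\ast$-part makes the mirror set $S''_{\xi,y(p)}$ stationary in $V[g_{x(p)}]$. I then take a $\le^\ast$-lower bound $p^\ast$; this is possible since $\theta<\rho\le\nu<\kappa$ and the direct-extension order is sufficiently closed. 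I shrink the resulting $\kappa$-fat trees so that their splitting measures have indices below some $\tilde\alpha<\nu$. The key point is that $\theta<\rho=\cf(\nu)$, so $\tilde\alpha<\nu$. Next I pick $\mu\in A^{p^\ast}$ with $o^{\mathcal U}(\mu)=\tilde\alpha$ on which all the trees restrict to $\mu$-fat trees, and pass to $p^\ast+\langle\mu\rangle$ with stem $y_\mu$ and $x_\mu=\pi^1[y_\mu]$.

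The second step works in $V[g_{x_\mu}]$, which is an $\Add(\kappa^+,1)$-extension by the Remark following Lemma~\ref{lem: gx generic} (the projection $\mathbb Z_y=\mathbb X_{y^1}$ lets this go through for $\mathbb R^\ast$). There I form the family $\mathcal A=\{S^t_\xi\}$ of mirror sets over all $\xi<\theta$ and all branches $t$ below $\mu$, intersected with a fixed cofinality $E^{\kappa^+}_\beta$, $\beta<\kappa$, chosen to keep them stationary. This family has size $<\kappa$ and consists of stationary subsets of $E^{\kappa^+}_{<\kappa}$. By the hypothesis (preserved by $\Add(\kappa^+,1)$), there are unboundedly many regular $\eta<\kappa$ with $T(\mathcal A)\cap E^{\kappa^+}_\eta\neq\emptyset$ such that either $\eta$ is not a successor of a singular or $\ap_\eta$ holds. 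I choose such an $\eta$ lying strictly between two consecutive points $\gamma_0<\gamma_1$ of the generic club with $o^{\mathcal U}(\gamma_1)=0$, above $\mu$ and $\beta$, and a $\delta\in T(\mathcal A)$ with $\cf^V(\delta)=\eta$. Finally I amalgamate witnesses, fewer than $\kappa$ of them, into a single direct extension $p^\ast_\mu$, as in Theorem~\ref{main theorem 1}.

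The last step, which I expect to be the main obstacle, is showing that each $S_\xi\cap\delta$ stays stationary in $V[G]$ even though $\eta$ is collapsed or otherwise changed by the $\mathbb R$-part. Following the two Claims in Theorem~\ref{main theorem 2}, I transfer to $\hat S_\xi\s E^\eta_\beta$ via a $V$-club $C(\delta)$ of order type $\eta$. Given a name for a club in $\eta$, Lemma~\ref{bounded new subsets below kappa are from R} replaces it by an $\mathbb R\restriction\gamma_1$-name. Below $\gamma_1$ the forcing has small chain condition on the lower part and is closed on the relevant part, so every $\hat S_\xi$ lies in $I[\eta]$: in case (1) because $E^\eta_\beta\in I[\eta]$ for $\beta^+<\eta$ with $\eta$ not a successor of a singular (Shelah \cite{Sh:108}), and in case (2) because $\ap_\eta$ holds. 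Approachable stationary sets are preserved by such forcings, so every $\hat S_\xi$ meets the club simultaneously, since a single $\delta$ works for all $\xi$. The delicate points are handling all branches $t$ below $\mu$ uniformly and checking that the partial-order requirements on $\mathbb R\restriction\gamma_1$ in Shelah's preservation argument are met.
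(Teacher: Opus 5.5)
Your proposal follows the paper's proof almost step by step. It iterates the Strong Prikry Property of $\mathbb R^\ast$ with the sets $D(\dot S_\xi)$ and uses $\theta<\rho=\cf(\nu)$ to get $\tilde\alpha<\nu$. It passes to $p^\ast+\langle\mu\rangle$ with $o^{\mathcal U}(\mu)=\tilde\alpha$ and reflects the mirror family $\mathcal A$ in the $\Add(\kappa^+,1)$-extension $V[g_{x_\mu}]$ at a $\delta$ of approachable cofinality $\eta$. It then amalgamates fewer than $\kappa$ witnesses into $p^\ast_\mu$ and preserves each $\hat S_\xi$ via Lemma~\ref{bounded new subsets below kappa are from R} and \cite{Sh:108}.

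The one place where your configuration is not right as stated is where $\beta$ sits relative to the generic club. It is not enough to take $\eta$ above $\beta$: the club point $\gamma_0$ immediately below $\eta$ must itself lie above $\beta$, i.e. there must be a point of the generic club between $\beta$ and $\eta$. The paper secures this by choosing $\beta(\xi,t)$ separately for each $S^t_\xi$, since a single cofinality need not work for all of them. It then lets $\gamma_0$ be the first point of $K$ above $\sup_{\xi,t}\beta(\xi,t)$ and takes $\eta>\gamma_1$.

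This requirement is essential. Suppose $\gamma_0<\beta<\eta<\gamma_1$. Then the factor $\col(\gamma_0^+,<\gamma_1)$ is not closed under $\beta$-sequences, so Shelah's preservation argument does not apply. Worse, in $V[G]$ both $\eta$ and every point of $\hat S_\xi\s E^\eta_\beta$ acquire cofinality $\gamma_0^+$. So $\hat S_\xi$ becomes nonstationary in $\eta$ and your last step fails. Once $\gamma_0>\beta$, the closed factor is closed under $\beta$-sequences and the lower factor has a small enough chain condition, and your preservation step goes through as in the paper.
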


    \begin{proof}
    Assume that in $V$ $\rfl_{<\kappa}(E^{\kappa^+}_{<\kappa})$ holds.
    
    Let $\theta<\rho$, and let $\vec S=\langle \dot S_\xi\mid \xi<\theta\rangle$ be such that 
        $1_{\mathbb R^\ast}\Vdash \dot S_\xi\s \check\kappa^+ \text{ is stationary}$.
        For $\xi<\theta$ let 
        \[D(\dot S_\xi):=\{p\in\mathbb R^\ast\mid \dot c^p\Vdash_{\mathbb X_{x(p)}} S^\dprime_{\xi,y(p)} \text{ is stationary} \}.\]
        Use the Strong Prikry Property \ref{Strong R* Prikry Property} for $D(\dot S_0)$ to obtain $p_0\le^\ast 1_{\mathbb R^\ast}$ and $T_0$ a $\kappa$-fat tree. 
        If $p_\xi$ is defined, obtain $p_{\xi+1}\le^\ast p_\xi$ and  $T_{\xi+1}$ by applying the Strong Prikry Property \ref{Strong R* Prikry Property} for $D(\dot S_{\xi+1})$.
        If $p_\zeta$ has been defined for every $\zeta<\xi$, 
        let $p^\ast\le^\ast\langle p_\xi\mid \xi<\theta\rangle$.
        Let us obtain $p_\xi\le^\ast \langle p_\zeta\mid \zeta<\xi\rangle$ and $T_{\xi}$ by applying the Strong Prikry Property \ref{Strong R* Prikry Property} for $D(\dot S_{\xi})$. 
        
        Notice that for all $\xi<\theta$ there is $\alpha_\xi$ such that there is $T_\xi^\ast\s T_\xi$ $\kappa$-fat such that for all $t\in T_\xi^\ast$ non-maximal there is $\alpha<\alpha_\xi$ with $\Succ_{T_\xi^\ast}(t)\in U_{\kappa,\alpha}$.
        Let $\tilde\alpha=\sup_{\xi<\theta}\alpha_\xi$. 
        Therefore there is some $\mu\in A^{p^\ast}$ with $o^{\mathcal {U}}(\mu)=\tilde \alpha$ such that for all $\xi<\theta$ $T_\xi^\ast\cap [\mu]^{<\omega}$ is $\mu$-fat.
        Let $p_\mu:=p^\ast +\langle\mu\rangle$ and let $x_\mu:= \pi^1(y(p_\mu))$.

        Work now in $V[g_{x_\mu}]$:
        Let $\xi<\theta$. For all possible $t\in T_\xi^\ast$ below $\mu$, define
        $S^t_\xi:=\{\alpha<\kappa^+\mid \exists A\in U(\kappa)\exists i<\kappa^+ \exists \dot c\in g_{x_\mu} ~ \langle y(p_\mu), (A,i), \dot c\rangle + \langle t\rangle\Vdash \check \alpha \in \dot S_\xi\}$.
        Clearly $S^t_\xi$ is stationary in $V[g_{x_\mu}]$.
        
        Let 
        $\mathcal A:=\{S_\xi^t\mid \xi<\theta \wedge t\in T_\xi^\ast\cap[\mu]^{<\omega}\}$.
        Since $S_\xi^t$ is stationary in $V[g_{x_\mu}]$, there is some $\beta(\xi,t)<\kappa$ such that $S_\xi^t\cap E_{\beta(\xi,t)}^{\kappa^+}$ is stationary. Replace $S_\xi^t$ by this stationary subset.
        Let 
        \[\gamma_0=\min \{\zeta\in K\mid \zeta>\sup(\{\beta(\xi,t)\mid \xi<\theta, t\in T^\ast_\xi\cap [\mu]^{<\omega}\})\}\] 
        and 
        \[\gamma_1=\min\{\zeta\in K_0\mid \zeta> \gamma_0 \}.\]
        Since $|\mathcal A|=\mu<\kappa$ and since $\rfl_{<\kappa}( E^{\kappa^+}_{<\kappa})$ holds and preserved by $\Add (\kappa^+,1)$ it also holds in $V[g{x_\mu}]$.

        \begin{claim}
            There are $\delta<\kappa^+$ of cofinality $\eta>\gamma_1$ and $\{\zeta_\varepsilon\mid \varepsilon<\eta\}$ a club in delta such that every $S\in\mathcal A$ reflects at $\delta$ and $\{ \varepsilon<\eta\mid \zeta_\varepsilon \in S\}\in I[\eta]$.
        \end{claim}
        \begin{proof}
            If $\ap_{\alpha^+}$ holds for all $\alpha$ singular then there is some $\delta<\kappa^+$ with $\cf(\delta)=\eta>\gamma_1$ such that every $S\in\mathcal A$ reflects at $\delta$, 
            and if $\{\zeta_\varepsilon\mid \varepsilon<\eta\}$ is a club in $\delta$ then clearly  $\{ \varepsilon<\eta\mid \zeta_\varepsilon \in S\}\in I[\eta]$.

            Otherwise the set $\{\eta<\kappa\mid \eta \text{ inaccessible, } \exists\delta \in E^{\kappa^+}_{\eta^+} \text{ s.t. } \forall S\in \mathcal A ~ S\cap\delta \text{ is stationary}\}$ is unbounded in $\kappa$. Thus there is some $\delta<\kappa^+$ with $\cf(\delta)=\eta$ for some regular non-successor of singular $\eta>\gamma_1$. Clearly if $\{\zeta_\varepsilon\mid \varepsilon<\eta^+\}$ is a club in $\delta$ then for all $S\in\mathcal A$ the set $\{\varepsilon<\eta^+\mid \zeta_\varepsilon\in S \}\in I[\eta^+]$.
        \end{proof}

        Therefore let $\delta<\kappa^+$ with $\cf(\delta)=\eta>\gamma_1$ and let $C_\delta=\{\zeta_\varepsilon\mid \varepsilon<\eta\}$ be some $V$-club in $\delta$ such that for all $S\in\mathcal A$  $\{\varepsilon<\eta\mid \zeta_\varepsilon\in S\}\in I[\eta]$.

        Let $\xi<\theta$ and let $t\in T_\xi^\ast$ below $\mu$. Then for all $\gamma\in S^t_\xi\cap C_\delta$ there are $i(\xi,t,\gamma)<\kappa^+$, $A(\xi,t,\gamma)\in \bigcap_{j<o^{\mathcal{ U}}(\kappa)} U(\kappa,j)$ and $\dot c(\xi,t,\gamma)\in g_{x_\mu}$ such that 
        \[\langle y_\mu, (A(\xi,t,\gamma),i(\xi,t,\gamma)), c(\xi,t,\gamma)\rangle \Vdash \check\gamma\in \dot S_\xi.\]
        Let 
        \[A^\ast:=\bigcap\{A(\xi,t,\gamma)\mid \xi<\theta, t\in T_\xi^\ast\cap[\mu]^{<\omega}, \gamma\in S^t_\xi\cap C_\delta\},\] 
        \[i^\ast:=\sup\{i(\xi,t,\gamma)\mid \xi<\theta, t\in T_\xi^\ast\cap[\mu]^{<\omega}, \gamma\in S^t_\xi\cap C_\delta\},\] 
        and  
        \[\dot c^\ast\le\langle \dot c(\xi,t,\gamma)\mid \xi<\theta, t\in T_\xi^\ast\cap[\mu]^{<\omega}, \gamma\in S^t_\xi\cap C_\delta\rangle.\]
        Now take $p^{\ast}_\mu:=\langle y_\mu, (A^\ast, i^\ast), \dot c^\ast\rangle\le^\ast p_\mu$.

        Hence for every maximal $t\in T_\xi^\ast$ below $\mu$ that is possible for $p^\ast_\mu$, $p^\ast_\mu+t\Vdash S_\xi^t\cap C_\delta \s \dot S_\xi\cap C_\delta$.

        Let $G\s \mathbb R^\ast$ such that $p^\ast_\mu\in G$.
        
    \begin{claim}
            For all $\xi<\theta$ and $t$ possible for $p^\ast_\mu$, $S_\xi^t\cap C_\delta$ remains stationary in $V[G]$.
    \end{claim}
        
    \begin{proof}
        Let $\hat S^t_\xi:=\{\varepsilon<\eta\mid \zeta_\varepsilon\in S^t_\xi\}$, and from the choice of $\delta,\eta,C_\delta$ we have that $\hat S^t_\xi\in I[\eta]$.
        Thus for all $q\le p^\ast_\mu+t$ there is some $q'\le q$ and some $\alpha_0,\alpha_1\in\dom (q')$ such that $\alpha_0<\eta<\alpha_1$ and $o^{\mathcal {U}}(\alpha_1)=0$.
        By Lemma~\ref{bounded new subsets below kappa are from R} and \cite{Sh:108}, $q'$ forces that $\hat S_\xi^t$ remains stationary.
        Thus $p^\ast_\mu\Vdash \hat{S}^t_\xi \text{ remains stationary}$.
    \end{proof}
    Therefore for every $\xi<\theta$ $p^\ast_\mu \Vdash \dot S_\xi\cap \check \delta \text{ is stationary}$.
\end{proof}

\section*{Remarks and further developments}
Our next step is to determine whether the same methods work together with the failure of $\sch$--- this is currently a work in progress.
\section*{Acknowledgments}
The author is deeply grateful to their advisor, Yair Hayut, for their invaluable guidance, insight, and continuous support throughout this research. The author is supported by the Israel Science Foundation, Grant No 1967/21.

\end{document}